\numberwithin{equation}{section}
\newtheorem{prop}{Proposition}[section]
\newtheorem{theo}[prop]{Theorem}
\newtheorem*{theo*}{Theorem}
\newtheorem{lemm}[prop]{Lemma}
\newtheorem{coro}[prop]{Corollary}
\newtheorem*{claim}{Claim}
\newtheorem{defi}[prop]{Definition}
\newtheorem{rema}[prop]{Remark}
\theoremstyle{definition}
\newtheorem{quest}[prop]{Question}
\newtheorem{conj}[prop]{Conjecture}
\newcommand{\CC}{\mathbf{C}}
\newcommand{\RR}{\mathbf{R}}
\renewcommand{\SS}{\mathbf{S}}
\newcommand{\TT}{\mathbf{T}}
\newcommand{\ZZ}{\mathbf{Z}}
\newcommand{\cH}{\mathcal H}
\newcommand{\cM}{\mathcal M}
\newcommand{\cS}{\mathcal S}
\newcommand{\sC}{\mathscr{C}}
\DeclareMathOperator{\Tr}{Tr}
\DeclareMathOperator{\codim}{codim}
\DeclareMathOperator{\spt}{spt}
\DeclareMathOperator{\loc}{loc}
\DeclareMathOperator{\Vol}{Vol}
\DeclareMathOperator{\Area}{Area}
\DeclareMathOperator{\dist}{dist}
\DeclareMathOperator{\Ric}{Ric}
\DeclareMathOperator{\sing}{sing}
\DeclareMathOperator{\reg}{reg}
\newcommand{\td}[2]{\frac{d #1}{d #2}}
\newcommand{\bangle}[1]{\left\langle #1 \right\rangle}
\newcommand{\ep}{\varepsilon}
\numberwithin{equation}{section}
\begin{document}

\title{Positive scalar curvature with skeleton singularities}

%    Information for first author
\author{Chao Li}
\address{Department of Mathematics, Stanford University}
\email{rchlch@stanford.edu}
%\curraddr{Department of Mathematics, Stanford University, Stanford, CA, 94305}
\author{Christos Mantoulidis}
\address{Department of Mathematics, Massachusetts Institute of Technology}
\email{c.mantoulidis@mit.edu}

\begin{abstract}
    We study positive scalar curvature on the regular part of Riemannian manifolds with singular, uniformly Euclidean ($L^\infty$) metrics that consolidate Gromov's scalar curvature polyhedral comparison theory and edge metrics that appear in the study of Einstein manifolds. We show that, in all dimensions, edge singularities with cone angles $\leq 2\pi$ along codimension-2 submanifolds do not affect the Yamabe type. In three dimensions, we prove the same for more general singular sets, which are allowed to stratify along 1-skeletons, exhibiting edge singularities (angles $\leq 2\pi$) and arbitrary $L^\infty$ isolated point singularities. We derive, as an application of our techniques, Positive Mass Theorems for asymptotically flat manifolds with analogous singularities.
\end{abstract}

\maketitle

\section{Introduction}

\subsection{Background and statement of results}

Comparison, rigidity, and compactness theorems, and their  applications on the study of low-regularity Riemannian metrics satisfying some ``weak'' curvature conditions, has been a central theme in Riemannian geometry. There has been considerable success here in the case of \emph{sectional} curvature lower bounds (Alexandrov spaces; see, e.g., \cite{AleksandrovRerestovskiiNikolaev86, BuragoBuragoIvanov01}) as well as \emph{Ricci} curvature lower bounds (Cheeger-Colding-Naber theory; see, e.g., \cite{CheegerColding97, CheegerColding00a, CheegerColding00b, ColdingNaber12, CheegerNaber13}; for an optimal transport approach, see, e.g., \cite{LottVillani09, Sturm06a, Sturm06b, Sturm06c}).

The case of scalar curvature lower bounds is not as well understood, possibly due to a lack of a relevant geometric comparison theory. Gromov \cite{Gromov14} recently proposed a polyhedral comparison theory for the study of positive scalar curvature\footnote{which was recently confirmed by the first-named author, alongside a rigidity result conjectured by Gromov, using altogether different methods; see \cite{Li}.}, building on \eqref{eq:sigma.smooth.obstruction} of %the following well-known global obstruction and rigidity theorem. 
Theorem \ref{theo:sigma.obstruction.and.rigidity}, which characterizes smooth manifolds as being in one of three families:
\begin{enumerate}
    \item those that carry (smooth) metrics with positive scalar curvature;
    \item those that don't, but do carry metrics with nonnegative scalar curvature and are automatically Ricci-flat;
    \item those that carry neither.
\end{enumerate}
(Unless otherwise specified, all manifolds in this paper are %assumed to be 
smooth and of dimension at least three.) The three families are conveniently distinguished by the sign of the smooth $\sigma$-invariant (or Schoen invariant) of $M$:
\[ \sigma(M) := \sup \{ \mathcal{Y}(M, [g_0]) : [g_0] \text{ is a conformal class of metrics on M} \}, \]
where
\[ \mathcal{Y}(M, [g_0]) := \inf \left\{ \int_M R(g) \, d\Vol_g : g \in [g_0], \Vol_g(M) = 1 \right\} \]
and $R(g)$ denotes the scalar curvature of a smooth metric $g$. The sign of $\sigma(M)$ determines the so-called Yamabe type of $M$: positive, zero, or negative. For example: $\sigma(\SS^n) > 0$, $\sigma(\TT^n) = 0$, and $\sigma(\Sigma_g \times (\SS^1)^{n-2}) < 0$ for any surface $\Sigma_g$ with genus $g \geq 2$. (See \cite[Corollary A]{GromovLawson84}.)

\begin{theo}[See {\cite{KazdanWarner75, Schoen89}}] \label{theo:sigma.obstruction.and.rigidity}
    Let $M^n$, $n \geq 3$, be closed. Then
    \begin{equation} \label{eq:sigma.smooth.obstruction}
        \sigma(M) > 0 \iff M \text{ carries a smooth metric } g \text{ with } R(g) > 0.
    \end{equation}
    Moreover,
    \begin{equation} \label{eq:sigma.smooth.rigidity}
        R(g) \geq 0 \text{ and } \sigma(M) \leq 0 \implies \Ric(g) \equiv 0.
    \end{equation}
\end{theo}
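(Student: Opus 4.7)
The plan is to reduce both parts of the statement to standard properties of the conformal Laplacian $L_g = -a_n \Delta_g + R(g)$, where $a_n = 4(n-1)/(n-2)$, combined with the resolution of the Yamabe problem (Yamabe--Trudinger--Aubin--Schoen), which asserts that every conformal class admits a constant-scalar-curvature representative whose sign agrees with that of $\mathcal{Y}(M, [g_0])$. A key fact I use throughout is that $\mathcal{Y}(M, [g_0])$ and the first eigenvalue $\lambda_1(L_{g_0})$ always share the same sign, which follows by comparing the Yamabe quotient with the Rayleigh quotient of $L_{g_0}$.

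For the equivalence \eqref{eq:sigma.smooth.obstruction}, the direction $\Leftarrow$ is easy: if $R(g) > 0$ everywhere, the Rayleigh quotient of $L_g$ is strictly positive on any nonzero function, so $\lambda_1(L_g) > 0$ and hence $\sigma(M) \geq \mathcal{Y}(M, [g]) > 0$. For $\Rightarrow$, pick a conformal class $[g_0]$ with $\mathcal{Y}(M, [g_0]) > 0$ and apply the Yamabe theorem to obtain a representative of $[g_0]$ with constant positive scalar curvature.

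For the rigidity \eqref{eq:sigma.smooth.rigidity}, I first observe that $R(g) \geq 0$ forces $\mathcal{Y}(M, [g]) \geq 0$ (test $u \equiv 1$), while $\sigma(M) \leq 0$ forces $\mathcal{Y}(M, [g]) \leq 0$; hence $\mathcal{Y}(M, [g]) = 0$. The Yamabe theorem then yields $\tilde g = u^{4/(n-2)} g$, $u > 0$, with $R(\tilde g) \equiv 0$. Writing the conformal change formula $L_g u = 0$ as $R(g) u = a_n \Delta_g u$, pairing with $u$, and integrating by parts gives
\[
    \int_M R(g)\, u^2 \, dV_g = -a_n \int_M |\nabla u|^2 \, dV_g.
\]
The left side is nonnegative and the right side nonpositive, so both vanish; thus $u$ is constant and already $R(g) \equiv 0$. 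It remains to upgrade this to $\Ric(g) \equiv 0$.

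The main obstacle is this last upgrade, which I handle by a Bourguignon-type deformation. Suppose for contradiction that $\Ric(g) \not\equiv 0$, and consider the path $g_t := g - t\,\Ric(g)$. The linearization formula
\[
    DR_g(h) = -\Delta_g(\tr_g h) + \Div\Div h - \langle \Ric(g), h\rangle,
\]
combined with the contracted Bianchi identity $\Div\Ric(g) = \tfrac12 \nabla R(g)$ and $R(g) \equiv 0$, gives $R(g_t) = t|\Ric(g)|^2 + O(t^2)$. Since $\lambda_1(L_g) = 0$ is simple (standard for the conformal Laplacian) with constant eigenfunction $u_0$, and $\Delta_{g_t} u_0 \equiv 0$ for all $t$, Kato's first-order analytic perturbation formula yields
\[
    \lambda_1'(0) \;=\; \frac{\int_M |\Ric(g)|^2 \, dV_g}{\Vol(g)} \;>\; 0.
\]
Hence $\lambda_1(L_{g_t}) > 0$ for small $t > 0$, so $\mathcal{Y}(M, [g_t]) > 0$, contradicting $\sigma(M) \leq 0$. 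The delicate point here is the justification of this eigenvalue expansion---a standard but somewhat technical application of analytic perturbation theory---and care must also be taken in normalizing the eigenfunction against the evolving volume form $dV_{g_t}$.
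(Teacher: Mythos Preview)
The paper does not prove this theorem; it is stated in the introduction as a background result, attributed to Kazdan--Warner and Schoen, and is thereafter used as a black box. So there is no ``paper's own proof'' to compare against.

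Your argument is the classical one and is correct. One small slip: the parenthetical ``(test $u \equiv 1$)'' does not justify $\mathcal{Y}(M,[g]) \geq 0$, since plugging a specific function into the Yamabe functional gives an \emph{upper} bound on the infimum, not a lower one. The correct route is the one you already set up earlier: $R(g) \geq 0$ makes the Rayleigh quotient of $L_g$ nonnegative on every test function, so $\lambda_1(L_g) \geq 0$, and your stated sign equivalence between $\lambda_1(L_g)$ and $\mathcal{Y}(M,[g])$ then gives $\mathcal{Y}(M,[g]) \geq 0$. Everything else---the integration by parts showing $u$ constant and $R(g)\equiv 0$, and the Bourguignon deformation $g_t = g - t\,\Ric(g)$ with the eigenvalue perturbation---is standard and correct; your observation that $\Delta_{g_t}$ kills constants for every $t$ is exactly what makes the computation of $\lambda_1'(0)$ clean, and the volume-form correction indeed drops out at first order because $\lambda_1(0)=0$.

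It is worth noting that the paper itself deploys exactly this Bourguignon-type argument (with a mollified $\Ric(g)$, to accommodate lower regularity) in the Ricci-flat claim within its proof of Theorem~\ref{theo:main.3D}; so your approach is very much in the spirit of the paper's own methods.
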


Gromov's \cite{Gromov14} %recently proposed a 
polyhedral comparison theory for positive scalar curvature %. The central idea
%is indirect, and 
relies on %being able to obtain 
a construction %that yields
of a 3-torus with a \emph{singular} uniformly Euclidean metric with positive scalar curvature. (See Section \ref{subsec:gromov.comparison}.) Motivated by further implications of such a study of weak notions of positive scalar curvature, one naturally wonders which uniformly Euclidean metrics with nonnegative scalar curvature (on their regular part) are compatible with the global obstruction and rigidity aspects of Theorem \ref{theo:sigma.obstruction.and.rigidity}:

\begin{quest}[Weakly nonnegative scalar curvature, globally] \label{quest:singular.psc.rigidity}
	Suppose $g$ is an $L^\infty$ metric on $M$ that is smooth away from a compact subset $\cS \subset M$. What conditions on $\cS$, $g$, ensure that
	\begin{multline} \label{eq:sigma.singular.rigidity}
	    R(g) \geq 0 \text{ on } M \setminus \cS \text{ and } \sigma(M) \leq 0 \\
	    \implies g \text{ extends smoothly to } M \text{ and } \Ric(g) \equiv 0?
	\end{multline}
	Said otherwise, under what conditions on $\cS$, $g$ does $M$ carry \emph{singular} metrics with nonnegative scalar curvature (on the regular part), but no such \emph{smooth} metrics?
\end{quest}

\begin{defi}[Uniformly Euclidean ($L^\infty$) metrics] \label{defi:linfty.metrics}
    We define the class of $L^\infty$ metrics on a closed manifold $M$ to consist of all measurable sections of  $\operatorname{Sym}_2(T^* M)$ such that
    \[ \Lambda^{-1} g_0 \leq g \leq \Lambda g_0 \text{ a.e. on } M \]
    for some smooth metric $g_0$ on $M$ and some $\Lambda > 0$.
\end{defi}

Let us discuss what is known in the general direction of Question \ref{quest:singular.psc.rigidity}. We also survey previous (singular) Positive Mass Theorem results because, in the smooth setting, \cite[Section 6]{Lohkamp99},  \cite[Proposition 5.4]{SchoenYau17}, and  
Theorem \ref{theo:sigma.obstruction.and.rigidity} \emph{imply} the Positive Mass Theorem of Schoen-Yau \cite{SchoenYau79pmt} and Witten \cite{Witten81} for complete asymptotically flat $(M^n, g)$ with $R(g) \geq 0$. 

\vskip 1em

\begin{center}
{\bf Codimension 1.}
\end{center}

The case that is best understood is
        \[ \codim (\cS \subset M) = 1, \]
        where $\cS$ is a closed embedded hypersurface with trivial normal bundle and where the ambient metric $g$ induces the same smooth metric on $\cS$ from both sides.
        
        One \emph{cannot} hope for Question \ref{quest:singular.psc.rigidity} to be valid in such generality. To maintain any hope of validity, one must make an additional \emph{geometric} assumption: the sum of mean curvatures of $\cS$ computed with respect to the two unit normals as outward unit normals has to be nonnegative. See Section \ref{subsec:codimension.1.appendix} for more details.
        
        There have been three approaches, all subject to the geometric assumption just described. The first, and closest in spirit to this paper, is to combine the conformal method with arbitrarily fine desingularizations that are aware of the ambient geometry; this was first carried out in the positive mass setting by Miao \cite{Miao02}; see also \cite{Corvino00, Bray01}. The second is to use Ricci flow as a smoothing tool; see \cite{ShiTam16}, or \cite{McFeronSzekelyhidi12} for the positive mass analog. The third is to use spinors; see \cite{ShiTam02, LeeLeFloch15} for positive mass, or \cite{ChruscielHerzlich03} for positive mass in asymptotically hyperbolic spaces.
        
\vskip 1em
\begin{center}
{\bf Codimension 2.}
\end{center}

Much less is known when $\cS \subset M$ is a closed embedded submanifold with
        \[ \codim(\cS \subset M) \geq 2. \]
        Nonetheless, one can \emph{still} not expect Question \ref{quest:singular.psc.rigidity} to be valid in such generality and needs to decide on additional geometric assumptions; see, e.g., Section \ref{subsec:codimension.2.appendix} for counterexamples.
        
        One approach, which we won't pursue, is to strengthen the regularity assumptions on $g$; to that end, Shi-Tam \cite{ShiTam16} proved (using Ricci flow) that \eqref{eq:sigma.singular.rigidity} is \emph{true} if $g$ is Lipschitz across $\cS$. See \cite{Lee13} for a result in the positive mass setting that uses the conformal method.

        In our work in codimension two, we opt to keep the low ($L^\infty$) regularity assumption and instead study metrics whose singularities are of ``edge'' type (see Definition \ref{defi:edge.singularity}), which consolidate Gromov's polyhedral comparison theory together with the study of singularities in Einstein manifolds. Edge singularities have been studied intensively recently due to the Yau-Tian-Donaldson program in K\"ahler-Einstein geometry; see, e.g., \cite{ChenDonaldsonSun15a, ChenDonaldsonSun15b, ChenDonaldsonSun15c, Tian15, JeffresMazzeoRubinstein16}, or \cite{AtiyahLebrun13} for non-complex-geometric results in (real) dimension four. See Section \ref{subsec:source.edge.metrics} for examples of edge metrics.

        Our first theorem deals with codimension two edge singularities in all dimensions, $n \geq 3$:

\begin{theo} \label{theo:main.highD}
	Let $M^n$ be closed, with $\sigma(M) \leq 0$, and $g$ a metric such that:
	\begin{enumerate}
	    \item $g \in L^\infty(M) \cap C^2_{\loc}(M \setminus \cS)$, $\cS \subset M$ is a codimension-2 closed submanifold, and $g$ is an $\eta$-regular edge metric along $\cS$ with $\eta > 2 - \tfrac{4}{n}$ and cone angles $0 < 2\pi(\beta+1) \leq 2\pi$,
	    \item $R(g) \geq 0$ on $M \setminus \cS$.
	\end{enumerate}
	Then $g$ extends to a smooth Ricci-flat metric everywhere on $M$.
\end{theo}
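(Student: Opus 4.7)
My plan is to follow the conformal desingularization strategy, in the spirit of Miao's treatment of the codimension-$1$ case (\cite{Miao02}), but adapted to codimension-$2$ edge singularities, where the cone angle bound $2\pi(\beta+1) \leq 2\pi$ plays the role of the mean curvature positivity.

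\textbf{Step 1: Mollification.} Using the edge structure along $\cS$, build a one-parameter family of smooth metrics $g_\epsilon$ on $M$ obtained by mollifying $g$ in a tubular neighborhood $T_\epsilon(\cS)$ (of $g_0$-radius $\epsilon$) around $\cS$, while keeping $g_\epsilon = g$ outside $T_{2\epsilon}(\cS)$. The family satisfies a uniform $L^\infty$ bound by Definition \ref{defi:linfty.metrics}, and converges smoothly to $g$ on compact subsets of $M \setminus \cS$.

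\textbf{Step 2: Scalar curvature estimate.} Outside $T_{2\epsilon}(\cS)$, $R(g_\epsilon) = R(g) \geq 0$. Inside $T_{2\epsilon}(\cS)$, $R(g_\epsilon)$ may blow up like $\epsilon^{-2}$, but the assumption that the cone angle is $\leq 2\pi$ means that the distributional scalar curvature concentrated along $\cS$ is \emph{nonnegative} (the angle defect is $\geq 0$), while the $\eta$-regularity with $\eta > 2 - \tfrac{4}{n}$ controls the lower-order mollification error. The outcome I aim for is an integral estimate of the form
\[
\int_M R_-(g_\epsilon) \, \phi^2 \, dV_{g_\epsilon} \longrightarrow 0 \quad \text{as } \epsilon \to 0
\]
for all $\phi \in W^{1,2}(M)$ that cut off $\cS$, where the exponent $\eta > 2 - \tfrac{4}{n}$ is precisely what makes this error vanish in the scale-invariant sense attached to the conformal Laplacian.

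\textbf{Step 3: Conformal correction.} On each $(M, g_\epsilon)$, take the principal eigenpair $(u_\epsilon, \lambda_\epsilon)$ of the conformal Laplacian $L_{g_\epsilon} = -c_n \Delta_{g_\epsilon} + R(g_\epsilon)$, normalized so that $\|u_\epsilon\|_{L^{2n/(n-2)}} = 1$. Using $W^{1,2}$ capacity $0$ of codimension-$2$ sets, test functions vanishing near $\cS$ are dense, and combined with Step 2 this yields $\lambda_\epsilon \geq -o(1)$. The conformal metric $\tilde g_\epsilon := u_\epsilon^{4/(n-2)} g_\epsilon$ then satisfies $R(\tilde g_\epsilon) = \lambda_\epsilon \, u_\epsilon^{-4/(n-2)} \geq -o(1)$ (after a further pointwise estimate on $u_\epsilon$ from Harnack/Moser iteration, justified by the uniform $L^\infty$ bound on $g_\epsilon$).

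\textbf{Step 4: Limit and conclusion.} Standard elliptic regularity gives subsequential smooth convergence $u_\epsilon \to u$ away from $\cS$, with $u > 0$ satisfying $L_g u = \lambda u$ weakly and $\lambda \geq 0$. If $\lambda > 0$, then $u^{4/(n-2)} g$ would furnish, after a further smoothing / conformal iteration, a smooth metric of positive scalar curvature on $M$, contradicting $\sigma(M) \leq 0$ by \eqref{eq:sigma.smooth.obstruction}. Hence $\lambda = 0$, the limit metric $\tilde g = u^{4/(n-2)} g$ has $R(\tilde g) \geq 0$ on $M \setminus \cS$ and extends smoothly across $\cS$ (again using the codimension-$2$ removable singularity theorem for the linear elliptic PDE satisfied by $u$, together with the edge asymptotics). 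Applying \eqref{eq:sigma.smooth.rigidity} to $\tilde g$ gives $\Ric(\tilde g) \equiv 0$. The equality case of the eigenvalue argument then forces $u$ to be constant, so $g = \tilde g$ is smooth and Ricci-flat across $\cS$.

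The main obstacle will be Step 2, i.e.\ the sharp integral control of $R_-(g_\epsilon)$: one needs to exploit the nonnegativity of the distributional curvature along $\cS$ (from the cone angle bound) while the bulk mollification error, whose size is governed by the $\eta$-regularity, must be integrated against the codimension-$2$ capacitary test functions used in Step 3. The threshold $\eta > 2 - \tfrac{4}{n}$ is precisely the one for which the mollification error is negligible against the Sobolev norm of such test functions in dimension $n$.
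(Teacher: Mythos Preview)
Your overall strategy---smooth, control the negative scalar curvature, conformally correct, then invoke $\sigma(M)\le 0$---matches the paper's, and Steps~1--3 are close in spirit to Lemma~\ref{lemm:smooth.out.edge.singularities} and Lemma~\ref{lemm:lq.positive.scalar.curvature}. The real gap is in Step~4.

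You claim that when $\lambda=0$ the limit $\widetilde g=u^{4/(n-2)}g$ ``extends smoothly across $\cS$'' by a removable singularity theorem for $u$. But a removable singularity result for the scalar PDE can at best make $u$ bounded and regular across $\cS$; it cannot undo the edge singularity of $g$ itself. Conformal factors do not change cone angles, so if $\beta<0$ somewhere on $\cS$, the metric $u^{4/(n-2)}g$ still has a genuine cone singularity there and \eqref{eq:sigma.smooth.rigidity} does not apply. Your closing line (``equality forces $u$ constant, so $g=\widetilde g$'') only hands you back the original singular $g$.

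What is missing is a mechanism that forces $\beta\equiv 0$ (all cone angles exactly $2\pi$) in the rigidity case. The paper's smoothing (Lemma~\ref{lemm:smooth.out.edge.singularities}) is engineered so that whenever $\beta(p)<0$ there is a \emph{concentration of positive scalar curvature} of size $\varepsilon^{-2}$ on an annulus of $g$-volume $\sim\varepsilon^{2}$ around $p$ (conclusion~(5)). In Proposition~\ref{prop:smooth.out.n2.faces} this positive contribution is played off against the $L^{n/2+\delta}$-small negative part and the Harnack constant of Lemma~\ref{lemm:lq.positive.scalar.curvature} to make the principal eigenvalue for the \emph{smoothed} metric strictly positive. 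That produces a smooth PSC metric and hence a contradiction with $\sigma(M)\le 0$ unless both $R(g)\equiv 0$ \emph{and} $\beta\equiv 0$. Only then is $g$ already $C^{2}$ across $\cS$ (read off from the edge expansion with $\beta=0$), and the smooth rigidity \eqref{eq:sigma.smooth.rigidity} finishes. Your Step~2 tracks only $R_-(g_\varepsilon)\to 0$; you also need the quantitative positive piece coming from the angle defect to close the argument.
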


We note, in Section \ref{subsec:codimension.2.appendix}, that Theorem \ref{theo:main.highD} would be \emph{false} if one were to allow edge metrics with cone angles $> 2\pi$.

Despite recurring success in the study of Einstein metrics, the role of edge metrics in scalar curvature geometry has not been understood with depth. We expect general stratified singular sets with edge singularities along the codimension two strata to appear in the study of singular scalar curvature in a natural way. See, e.g.,   Akutagawa-Carron-Mazzeo \cite{AkutagawaCarronMazzeo14} for the singular Yamabe problem in this setting.

\vskip 1em
\begin{center}
{\bf Codimension 3.}
\end{center}

Rick Schoen has conjectured that the situation is drastically different in codimension three than in codimensions one or two: one shouldn't need any additional regularity assumptions beyond $L^\infty$ for \eqref{eq:sigma.singular.rigidity} to hold true:

\begin{conj} \label{conj:schoen}
    Suppose $g$ is an $L^\infty$ metric on $M$ that is smooth away from a closed, embedded submanifold $\cS \subset M$ with $\codim (\cS \subset M) \geq 3$. Then
	\begin{multline*}
	    R(g) \geq 0 \text{ on } M \setminus \cS \text{ and } \sigma(M) \leq 0 \\
	    \implies g \text{ extends smoothly to } M \text{ and } \Ric(g) \equiv 0.
	\end{multline*}
\end{conj}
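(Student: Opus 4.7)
The plan is to combine the conformal method used in the proofs of Theorem \ref{theo:main.highD} and the preceding codimension-1 work with a capacity-based removable-singularity argument that exploits $\codim(\cS\subset M)\geq 3$. The objective is to produce out of $g$ a smooth metric on $M$ with nonnegative, not identically zero, scalar curvature; by \eqref{eq:sigma.smooth.obstruction} this forces $\sigma(M)>0$, contradicting the hypothesis unless one lands in a degenerate case that directly gives a smooth Ricci-flat extension of $g$.

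The key new ingredient for codimension $\geq 3$ is that closed embedded submanifolds of codimension $\geq 3$ have vanishing $W^{1,2}$-capacity in $(M,g_0)$ for any smooth reference metric $g_0$. Concretely, for every $\varepsilon>0$ there exists $\chi_\varepsilon\in C^\infty(M)$ with $\chi_\varepsilon\equiv 1$ on a neighborhood of $\cS$, $\chi_\varepsilon\equiv 0$ outside a slightly larger neighborhood, and $\int_M|\nabla \chi_\varepsilon|^2_{g_0}\,dV_{g_0}\to 0$. By Definition \ref{defi:linfty.metrics} the same estimate holds with $g$ in place of $g_0$. Thus $\cS$ is invisible to the Dirichlet energy, and in particular the conformal Laplacian quadratic form
\[
\mathcal{Q}(u)=\int_{M\setminus\cS}\Big(|\nabla u|^2_g+\tfrac{n-2}{4(n-1)}R(g)\,u^2\Big)\,dV_g,
\]
which is nonnegative on $C_c^\infty(M\setminus\cS)$ because $R(g)\geq 0$, extends to a nonnegative quadratic form on the full Sobolev space $W^{1,2}(M,g_0)$; the associated Yamabe-type infimum $\lambda$ over $\{u:\|u\|_{L^{2n/(n-2)}}=1\}$ is therefore $\geq 0$.

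From here I would branch into two cases. If $\lambda>0$, one should be able to mollify $g$ to a smooth metric $\bar g$ inside a tiny neighborhood of $\cS$ while keeping the $L^\infty$ bounds intact, so that the capacity estimate persists; a minimizer $u>0$ of the analogous $\mathcal{Q}$ for $\bar g$ then yields a smooth conformal metric $\tilde g=u^{4/(n-2)}\bar g$ on $M$ with $\mathcal{Y}(M,[\tilde g])>0$, contradicting $\sigma(M)\leq 0$ via \eqref{eq:sigma.smooth.obstruction}. If $\lambda=0$, the minimizer $u$ is $L_g$-harmonic weakly on $M$ (using the capacity extension), so the conformal metric $\tilde g=u^{4/(n-2)}g$ is weakly scalar-flat; elliptic regularity for the conformal Laplacian on $M\setminus\cS$ together with \eqref{eq:sigma.smooth.rigidity} would push $\tilde g$ to be smoothly Ricci-flat, and a gauge and regularity bootstrap for the Einstein equation in harmonic coordinates would then propagate smoothness of $g$ itself across $\cS$.

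The main obstacle is this last step: promoting weak, $L^\infty$, scalar-flat information across an arbitrary codimension-$\geq 3$ singular set $\cS$ to actual smoothness of $g$. The capacity machinery cleanly handles the variational/obstruction side of \eqref{eq:sigma.smooth.obstruction}, but the rigidity/regularity side \eqref{eq:sigma.smooth.rigidity} in the present $L^\infty$ setting is essentially a removable-singularity problem for the nonlinear Einstein equation across a codimension-$\geq 3$ set, and this is genuinely open in full generality. Presumably this is precisely why the statement remains conjectural despite the clean variational input.
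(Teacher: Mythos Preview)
The statement is Conjecture \ref{conj:schoen}; the paper does not prove it in general. The only case settled here is $n=3$ (Corollary \ref{coro:main.3D}), where $\codim\geq 3$ forces $\cS$ to be finite, and the paper's route (Proposition \ref{prop:smooth.out.vertices}) is entirely different from yours and inherently three-dimensional: blow up each point singularity with a Green's function of a truncated conformal Laplacian, excise the resulting asymptotic end along a stable minimal $2$-sphere (whose existence uses the uniformly Euclidean excision Lemma \ref{lemm:bounded.minimal.surface.existence} together with \cite{FischerColbrieSchoen80}), then fill in with a positive-scalar-curvature cap \`a la \cite{MantoulidisMiao17} and smooth the resulting codimension-one interface via \cite{Miao02}. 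No capacity argument appears.

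You correctly flag the $\lambda=0$ rigidity branch as the main open obstacle, but your $\lambda>0$ branch also has a genuine gap. Mollifying an $L^\infty$ metric near $\cS$ gives no control on $R(\bar g)$: scalar curvature is second order in $g$, so $R(\bar g)_-$ can be arbitrarily large in $L^{n/2}$ on the mollification region, and the capacity estimate tames only the gradient term in $\bar{\mathcal{Q}}$, not the zeroth-order term $\int R(\bar g)\,u^2$. Without an $L^{n/2}$ bound on $R(\bar g)_-$ there is no reason positivity of the Yamabe-type infimum survives the mollification; obtaining such a bound is precisely what demands extra structure in this paper (the edge hypothesis feeding into Lemma \ref{lemm:smooth.out.edge.singularities}). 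As a sanity check, note that vanishing $W^{1,2}$-capacity already holds for codimension-$2$ submanifolds, yet Section \ref{subsec:codimension.2.appendix} exhibits flat $L^\infty$ metrics on manifolds with $\sigma\leq 0$ that are singular along codimension-$2$ sets---so your scheme, as written, does not distinguish codimension $3$ from codimension $2$ until the final removable-singularity step, which therefore carries essentially all the weight of the conjecture.
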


We confirm Conjecture \ref{conj:schoen}, when $n \, (= \dim M) = 3$, as a corollary to our second theorem.

\begin{coro} \label{coro:main.3D}
	Let $M^3$ be closed, with $\sigma(M) \leq 0$. If $\cS \subset M$ is a finite set, $g$ is an $L^\infty(M) \cap C^{2,\alpha}_{\loc}(M \setminus \cS)$ metric, $\alpha \in (0,1)$, and $R(g) \geq 0$ on $M \setminus \cS$, then $g$ is a smooth flat metric everywhere on $M$.
\end{coro}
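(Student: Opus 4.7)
The plan is to derive this corollary by specializing the paper's second main theorem (for closed 3-manifolds whose singular set stratifies as a 1-skeleton of edge singularities with cone angles $\leq 2\pi$ together with isolated $L^\infty$ points) to the case where the 1-skeleton is empty. Below I outline the direct argument appropriate to this purely isolated-point setting.

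First I would desingularize $g$ near $\cS$. Using the $L^\infty$ comparison $\Lambda^{-1} g_0 \leq g \leq \Lambda g_0$ with a smooth reference $g_0$, and cutoff functions $\chi_\epsilon$ supported in $g_0$-geodesic balls $B_\epsilon(p)$ with $\chi_\epsilon \equiv 1$ on $B_{\epsilon/2}(p)$ for each $p \in \cS$, define
\[
    g_\epsilon := (1 - \chi_\epsilon)\, g + \chi_\epsilon\, g_0.
\]
This is a $C^{2,\alpha}$ metric on $M$, smooth in a neighborhood of $\cS$, and equal to $g$ outside $\bigcup_p B_\epsilon(p)$. Its scalar curvature remains nonnegative off the interpolation annuli, is bounded on $\bigcup_p B_{\epsilon/2}(p)$, and can be as negative as $-C\epsilon^{-2}$ on annuli of volume $O(\epsilon^3)$.

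Next I would correct this mild negativity conformally. Set $\tilde g_\epsilon := u_\epsilon^{4}\, g_\epsilon$, where $u_\epsilon > 0$ is a first eigenfunction of the conformal Laplacian $L_{g_\epsilon} := -8 \Delta_{g_\epsilon} + R(g_\epsilon)$. The decisive point is that in dimension three, the codimension-$3$ set $\cS$ has zero $H^1$-capacity, so one can show that $\lambda_1(L_{g_\epsilon}) \geq 0$ (and in fact tends to $0$) as $\epsilon \to 0$, and that $u_\epsilon$ stays uniformly pinched between positive constants while converging to $1$ in $C^{2,\alpha}_{\loc}(M \setminus \cS)$. Hence each $\tilde g_\epsilon$ is a smooth metric on $M$ with $R(\tilde g_\epsilon) \geq 0$. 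Applying Theorem \ref{theo:sigma.obstruction.and.rigidity} together with $\sigma(M) \leq 0$ yields $\Ric(\tilde g_\epsilon) \equiv 0$, i.e., $\tilde g_\epsilon$ is flat in dimension three. Passing to the limit as $\epsilon \to 0$ shows that $g$ is flat on $M \setminus \cS$, and a removable-singularities argument for $L^\infty$ flat metrics at isolated points in a $3$-manifold then extends $g$ smoothly and flatly across $\cS$.

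The main obstacle is the spectral/capacity step: ensuring $\lambda_1(L_{g_\epsilon}) \geq 0$ together with the uniform two-sided bound $0 < c \leq u_\epsilon \leq C < \infty$ as $\epsilon \to 0$. This is where the codimension-$3$ character of $\cS$ is indispensable, and where the argument departs sharply from the codimension-$2$ methods underlying Theorem \ref{theo:main.highD}.
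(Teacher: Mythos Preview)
Your opening sentence---specializing Theorem \ref{theo:main.3D} to the case $\reg \cS = \emptyset$---is exactly how the paper derives this corollary. But the ``direct argument'' you sketch afterward is \emph{not} how the paper proves the relevant part of Theorem \ref{theo:main.3D}, and it contains a genuine gap.

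The problem is already in your scalar curvature estimate. You assert that $R(g_\varepsilon) \geq -C\varepsilon^{-2}$ on the interpolation annulus $B_\varepsilon(p) \setminus B_{\varepsilon/2}(p)$. But $g$ is only assumed to lie in $L^\infty(M) \cap C^{2,\alpha}_{\loc}(M \setminus \cS)$: there is \emph{no} uniform control on $\partial g$ or $\partial^2 g$ as one approaches $p$. Writing $g_\varepsilon = (1-\chi_\varepsilon)g + \chi_\varepsilon g_0$, the term $(1-\chi_\varepsilon)\,\partial^2 g$ in $\partial^2 g_\varepsilon$ may blow up arbitrarily fast as $\varepsilon \to 0$, and the cancellations that make $R(g) \geq 0$ (which use $g^{-1}$, not $g_\varepsilon^{-1}$) do not survive the interpolation. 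So $R(g_\varepsilon)_-$ is not bounded by $C\varepsilon^{-2}$ in general. Even if it were, $\|R(g_\varepsilon)_-\|_{L^{3/2}}$ would only be $O(1)$, not $o(1)$, which is not enough to force $\lambda_1(L_{g_\varepsilon}) \geq 0$; zero $H^1$-capacity of $\cS$ controls $\lambda_1$ of the \emph{unsmoothed} operator $-8\Delta_g + R(g)$, but says nothing about the smoothed $g_\varepsilon$ once you have inserted uncontrolled negativity.

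The paper avoids this entirely. For isolated $L^\infty$ point singularities it does \emph{not} interpolate. Instead (Proposition \ref{prop:smooth.out.vertices}): assuming $R(\widetilde g) > 0$ on $M \setminus \cS$ (obtained from Corollary \ref{coro:positive.scalar.curvature} when $R(g) \not\equiv 0$), it blows up each $p \in \cS$ via a Green's function conformal factor, producing a complete end that is still uniformly Euclidean after pullback; Lemma \ref{lemm:bounded.minimal.surface.existence} then finds a stable minimal $2$-sphere near each end, along which one excises and fills in with a positive-scalar-curvature cap. This yields a \emph{smooth} metric with $R > 0$, contradicting $\sigma(M) \leq 0$. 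The rigidity case $R(g) \equiv 0$ is then handled by a first-variation argument for $\Ric(g)$ and the Smith--Yang removable singularity theorem for Einstein metrics. Your spectral/capacity shortcut, appealing as it is, does not go through with merely $L^\infty$ control at the singular points.
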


Our second theorem is specific to the three-dimensional case, where we allow \emph{stratified} singular sets of codimension two. We prove (see Definitions \ref{defi:edge.singularity}, \ref{defi:nondegenerate.arrangement}):

\begin{theo} \label{theo:main.3D}
	Let $M^3$ be closed, with $\sigma(M) \leq 0$, and $g$ a metric such that:
	\begin{enumerate}
	    \item $g \in L^\infty(M) \cap C^{2,\alpha}_{\loc}(M \setminus \cS)$, $\alpha \in (0,1)$, where $\cS \subset M$ is a compact nondegenerate 1-skeleton, and is an $\eta$-regular edge metric along $\reg \cS$ with $\eta > \tfrac{2}{3}$ and cone angles $0 < \delta \leq 2\pi(\beta+1) \leq 2\pi$,
	    \item $R(g) \geq 0$ on $M \setminus \cS$.
	\end{enumerate}
	Then $g$ extends to a smooth flat metric everywhere on $M$.
\end{theo}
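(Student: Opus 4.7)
The plan is to adapt the conformal-desingularization scheme used for Theorem \ref{theo:main.highD} so that it also handles the zero-dimensional pieces of $\cS$---the vertices of the 1-skeleton and the isolated point singularities. The key enabler, available only in three dimensions, is that a point has vanishing $W^{1,2}$-capacity, so such codimension-3 singularities can be smoothed and then removed without disrupting the conformal analysis or the Sobolev inequalities feeding into it. The hardest ingredient is exactly this zero-dimensional smoothing (see Step 1): a naive radial cutoff near an isolated $L^\infty$ point singularity produces scalar curvature of order $\epsilon^{-2}$ on an $\epsilon$-ball of $g$-volume $\epsilon^3$, yielding an $L^{3/2}$-norm of order one that does not vanish. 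A capacity-aware construction is therefore required, and this is precisely the 3D feature absent from Theorem \ref{theo:main.highD} and the content of Conjecture \ref{conj:schoen} in codimension three.

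\textbf{Step 1: Smooth approximation.} For each $\epsilon > 0$ I would construct a smooth metric $g_\epsilon$ on $M$ with: (i) $g_\epsilon = g$ outside an $\epsilon$-neighborhood of $\cS$; (ii) a uniform $L^\infty$-equivalence with $g$; (iii) $\|R(g_\epsilon)^-\|_{L^{3/2}(M, g_\epsilon)} \to 0$; and (iv) $\liminf_{\epsilon \to 0} \int_M R(g_\epsilon)\, dV_{g_\epsilon} \geq \int_{M\setminus \cS} R(g)\, dV_g + \sum_{e \subset \reg \cS} (2\pi - \theta_e)\, \length(e) \geq 0$. Along $\reg \cS$ this is the mollification from Theorem \ref{theo:main.highD}: cone angles $\leq 2\pi$ contribute nonnegative concentrated scalar curvature, and $\eta$-regularity with $\eta > 2/3$ controls the residual smooth curvature. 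Near vertices and isolated points I would use a capacity-sensitive interpolation between $g$ and a flat Euclidean model that honors both (iii) and (iv), exploiting the fact that in 3D one can find cutoffs with arbitrarily small Dirichlet energy which vanish in neighborhoods of the points.

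\textbf{Step 2: Conformal deformation.} Let $\tilde g_\epsilon = u_\epsilon^4 g_\epsilon$ be the Yamabe minimizer in $[g_\epsilon]$. Since $\sigma(M) \leq 0$, necessarily $\mathcal{Y}(M, [g_\epsilon]) \leq 0$; otherwise $\tilde g_\epsilon$ would be a smooth metric on $M$ with $R > 0$, contradicting Theorem \ref{theo:sigma.obstruction.and.rigidity}. For a matching lower bound, the H\"older--Sobolev estimate $\int R^- u^2\, dV \leq \|R^-\|_{L^{3/2}} \|u\|_{L^6}^2$ combined with (iii) yields $\mathcal{Y}(M, [g_\epsilon]) \geq -\delta_\epsilon$ with $\delta_\epsilon \to 0$. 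Hence $\mathcal{Y}(M, [g_\epsilon]) \to 0$ and the $u_\epsilon$ are well-controlled.

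\textbf{Step 3: Limit and rigidity.} Elliptic regularity for the Yamabe equation yields uniform $C^{2,\alpha}_{\loc}$ bounds on $u_\epsilon$ over compact subsets of $M\setminus \cS$; combined with (ii), one extracts a subsequential limit $u > 0$ with $\tilde g = u^4 g$ smooth and scalar-flat on $M\setminus \cS$. The vanishing of $\mathcal{Y}(M,[g_\epsilon])$ forces each nonnegative term on the right side of (iv) to vanish individually: every cone angle equals $2\pi$, $R(g) \equiv 0$ on $M\setminus \cS$, and $u$ is constant. Removability of $\cS$ follows from the $\eta$-regularity with $\eta > 2/3$ along $\reg \cS$ (codim-2 removability) together with the vanishing $W^{1,2}$-capacity of points in 3D at the vertices and isolated points (codim-3 removability). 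Thus $g$ extends smoothly to all of $M$ with $R(g) \equiv 0$, and Theorem \ref{theo:sigma.obstruction.and.rigidity} upgrades this to $\Ric(g) \equiv 0$, which in dimension three is equivalent to $g$ being flat.
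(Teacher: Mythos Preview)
Your Step 1 contains a genuine gap that you correctly flag as ``the hardest ingredient'' but do not resolve. The suggestion to use a ``capacity-sensitive interpolation'' near the isolated points does not work as stated: $W^{1,2}$-capacity controls only first derivatives of a cutoff, whereas scalar curvature is a second-order operator in the metric. If you interpolate $g_\epsilon = (1-\chi_\epsilon)g + \chi_\epsilon\,\delta$ on an annulus $B_\epsilon \setminus B_{\epsilon'}$, then $R(g_\epsilon)$ involves second derivatives of $\chi_\epsilon$ \emph{and} of $g$ itself; since $g$ is merely $L^\infty$ at the point with no assumed decay on $\partial g$, $\partial^2 g$ as you approach it, the derivatives of $g$ on the transition annulus can blow up arbitrarily fast, and no choice of $\chi_\epsilon$ with small Dirichlet energy will make $\|R(g_\epsilon)_-\|_{L^{3/2}}$ small. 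This is precisely why codimension-3 $L^\infty$ point singularities are the content of Conjecture \ref{conj:schoen} and are not handled by the edge-smoothing machinery of Lemma \ref{lemm:smooth.out.edge.singularities}.

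The paper circumvents this obstacle by an entirely different mechanism: it does \emph{not} attempt to mollify the point singularities. Instead, after smoothing the edges (Proposition \ref{prop:smooth.out.n2.faces}) one is left with an $L^\infty \cap C^{2,\alpha}_{\loc}(M\setminus\sing\cS)$ metric with $R>0$ on the regular part. Proposition \ref{prop:smooth.out.vertices} then \emph{blows up} each point singularity conformally via a Green's function (using the Littman--Stampacchia--Weinberger asymptotics for $L^\infty$ coefficients), producing a complete end that is uniformly Euclidean but with no further decay; a new excision lemma (Lemma \ref{lemm:bounded.minimal.surface.existence}) locates a stable minimal $2$-sphere in that end, which is then cut and filled in with a PSC cap via \cite{Mantoulidis17, MantoulidisMiao17} and Miao's codimension-1 mollification. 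This yields a globally smooth PSC metric and the contradiction with $\sigma(M)\le 0$. The rigidity step is then a separate Fr\'echet-derivative argument showing $\Ric(g)\equiv 0$ away from $\sing\cS$, followed by the Smith--Yang removable singularity theorem for Einstein metrics at the points---not a capacity argument. Your Steps 2--3 would be a reasonable alternative packaging \emph{if} Step 1 were available, but as written the proposal does not supply the missing idea.
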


\subsection{Applications to asymptotically flat manifolds}

Recall that, in the smooth setting,  \cite[Section 6]{Lohkamp99} and \cite[Proposition 5.4]{SchoenYau17} together imply that the positive mass theorem follows from Theorem \ref{theo:sigma.obstruction.and.rigidity}. The constructive techniques in our proofs of Theorems \ref{theo:main.highD}, \ref{theo:main.3D} similarly allow us to obtain Riemannian Positive Mass Theorems for singular metrics on analogous manifolds:

\begin{theo} \label{theo:PMT.highD}
	Let $(M^n,g)$ be a complete asymptotically flat manifold, such that:
	\begin{enumerate}
	    \item $g \in L^\infty(M) \cap C^2_{\loc}(M \setminus \cS)$, $\cS \subset M \setminus \partial M$ is a closed codimension two submanifold, and $g$ is an $\eta$-regular edge metric along $\cS$ with $\eta > 2 - \tfrac{4}{n}$ and cone angles $0 < 2\pi(\beta+1) \leq 2\pi$,
	    \item $\partial M = \emptyset$, or its mean curvature vectors vanish or point inside $M$,
	    \item $R(g) \geq 0$ on $M \setminus \cS$.
	\end{enumerate}
	Then the ADM mass of each end of $M$ is nonnegative. Moreover, if the mass of any end is zero, then $(M^n, g) \cong (\RR^n, \delta)$.
\end{theo}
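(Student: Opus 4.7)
The plan is to mimic the Lohkamp/Schoen-Yau reduction of the smooth positive mass theorem to Theorem \ref{theo:sigma.obstruction.and.rigidity} (see \cite[Section 6]{Lohkamp99}, \cite[Proposition 5.4]{SchoenYau17}), using our singular obstruction Theorem \ref{theo:main.highD} in place of the smooth one. The critical structural point is that $\cS$ is compact, so Lohkamp's compactification (which operates in the smooth asymptotic region) can be performed without disturbing the edge singularities.

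For the nonnegativity assertion, suppose for contradiction that some end has ADM mass $m < 0$. First reduce to $\partial M = \emptyset$: the boundary hypothesis (2) lets us either double $(M,g)$ along a minimal $\partial M$, or fill in a mean-convex $\partial M$ by the codimension-1 capping technique summarized in Section \ref{subsec:codimension.1.appendix}, in either case not disturbing $\cS$. Pick $B_R$ containing $\cS$; outside $B_R$ the metric is smooth and asymptotic to flat, so Lohkamp's construction produces a smooth deformation $\tilde g$ of $g$ that equals $g$ inside $B_R$ (so the edge structure and $\cS$ are untouched), equals the flat Euclidean metric outside some larger $B_{R'}$, satisfies $R(\tilde g) \geq 0$ everywhere, and has $R(\tilde g) > 0$ on an open set. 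Attach the Euclidean end of $\tilde g$ to a large flat $n$-torus minus a ball, obtaining a closed manifold $(\hat M, \tilde g)$ whose singular set is still $\cS$ with cone angles $\leq 2\pi$. Since $\hat M$ admits a degree-one map to $\TT^n$, enlargeability \cite{GromovLawson84} gives $\sigma(\hat M) \leq 0$. Theorem \ref{theo:main.highD} then forces $\tilde g$ to be smooth Ricci-flat across $\cS$, contradicting $R(\tilde g) > 0$ on an open set.

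For the rigidity assertion ($m = 0$), the standard conformal method applies once the linear theory of $L_g = -\Delta_g + c_n R(g)$ is established on our edge metrics: solve $L_g u = 0$ with $u \to 1$ at infinity. If $R(g) \not\equiv 0$ on $M \setminus \cS$, then $u \not\equiv 1$ and $\tilde g = u^{4/(n-2)} g$ is scalar-flat with ADM mass strictly less than $0$, contradicting nonnegativity. Hence $R(g) \equiv 0$ away from $\cS$. A compactly-supported variational perturbation of $g$ in a region disjoint from $\cS$ combined with the first half upgrades this to $\Ric(g) \equiv 0$ away from $\cS$. A cut-and-glue argument as above, applied to a toral compactification and Theorem \ref{theo:main.highD}, then shows $g$ extends smoothly across $\cS$ and is globally flat, after which standard asymptotic analysis gives $(M, g) \cong (\RR^n, \delta)$.

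The main obstacle will be the linear theory of $L_g$ along the edge used in the rigidity step: one needs weak existence and regularity (in particular $u$ uniformly bounded above and away from zero, and $\tilde g$ remaining an $\eta$-regular $L^\infty$ edge metric of the same kind) for the conformal factor. This is expected to follow from the same Hardy-type and capacity considerations underlying Theorem \ref{theo:main.highD}, using that codimension-two sets have vanishing $W^{1,2}$-capacity, so that solutions extend uniquely across $\cS$ in the weak sense. A secondary subtlety is ensuring that the ADM mass integral on the asymptotic end, and the ``mass strictly decreases'' inequality, remain meaningful under the conformal change when $\tilde g$ is only an edge metric near $\cS$; since all of the mass integral is evaluated in the smooth asymptotic region, this is a consequence of the uniform regularity of $u$ at infinity.
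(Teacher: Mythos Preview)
Your nonnegativity argument is correct and takes a genuinely different route from the paper's. The paper does \emph{not} compactify and invoke Theorem~\ref{theo:main.highD}; instead it desingularizes directly. It applies Lemma~\ref{lemm:smooth.out.edge.singularities} to produce a globally $C^2$ metric $\widehat{g}_\varepsilon$ with $\Vert R(\widehat{g}_\varepsilon)_-\Vert_{L^{n/2+\delta}}$ small, solves $\Delta_{\widehat{g}_\varepsilon} u_\varepsilon + c_n R(\widehat{g}_\varepsilon)_- u_\varepsilon = 0$ with $u_\varepsilon \to 1$ at infinity, sets $\widetilde{g}_\varepsilon = u_\varepsilon^{4/(n-2)} \widehat{g}_\varepsilon$ (now smooth with $R \geq 0$), and shows $m_{ADM}(g) = \lim_{\varepsilon \to 0} m_{ADM}(\widetilde{g}_\varepsilon) \geq 0$ via Miao's mass-continuity lemma and the smooth PMT. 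Your Lohkamp-compactification route is shorter for pure nonnegativity (no limiting argument, no mass control under desingularization); the paper's constructive route is chosen deliberately for compatibility with intrinsic flat convergence (see the remark after the statements and Section~\ref{subsec:intrinsic.flat.distance}).

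Your rigidity argument, however, has a genuine gap beyond the linear-theory issue you flag. The final step, ``a toral compactification and Theorem~\ref{theo:main.highD} shows $g$ extends smoothly across $\cS$,'' does not go through: Lohkamp's compactification needs $m<0$, and once you have only $\Ric(g)\equiv 0$ on $M\setminus\cS$ there is no evident way to manufacture a closed $\widehat{M}$ with $\sigma(\widehat{M})\leq 0$ carrying an edge metric with $R\geq 0$ that agrees with $g$ near $\cS$. You have not excluded a Ricci-flat asymptotically flat metric that still has cone angle $<2\pi$ along a compact $\cS$. The intermediate obstacles you mention (existence and bounds for $u$ with the singular potential $R(g)$, and whether $u^{4/(n-2)}g$ remains $\eta$-regular edge so that your nonnegativity half applies to it) are also real and not resolved by zero $W^{1,2}$-capacity alone.

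The paper avoids all of this. The key idea you are missing is that the cone defect is detected \emph{quantitatively in the mass of the smooth approximants}: by Lemma~\ref{lemm:smooth.out.edge.singularities}(5), if $\beta<0$ somewhere on $\cS$ then $R(\widehat{g}_\varepsilon)\gtrsim \varepsilon^{-2}$ on an annulus of $g$-volume $\sim\varepsilon^2$, and Miao's \cite[Proposition~4.2]{Miao02} then gives $\liminf_{\varepsilon\to 0} m_{ADM}(\widetilde{g}_\varepsilon)>0$. Hence $m_{ADM}(g)=0$ forces $\beta\equiv 0$, so $g$ is already smooth across $\cS$, and the smooth rigidity case finishes.
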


\begin{theo}\label{theo:PMT.3D}
    Let $(M^3,g)$ be a complete asymptotically flat three-manifold, such that:
	\begin{enumerate}
	    \item $g \in L^\infty(M) \cap C^{2,\alpha}_{\loc}(M \setminus \cS)$, $\alpha \in (0,1)$, with $\cS \subset M \setminus \partial M$ a compact nondegenerate 1-skeleton, so that $g$ is an $\eta$-regular edge metric along $\reg \cS$ with $\eta > \tfrac{2}{3}$ and cone angles $0 < \delta \leq 2\pi(\beta+1) \leq 2\pi$,
	    \item $\partial M = \emptyset$, or its mean curvature vectors vanish or point inside $M$,
	    \item $R(g) \geq 0$ on $M \setminus \cS$,
	\end{enumerate}
	Then the ADM mass each end of $M$ is nonnegative. Moreover, if the mass of any end is zero, then $(M, g) \cong (\RR^3, \delta)$.
\end{theo}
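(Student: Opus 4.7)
The plan is to reduce Theorem \ref{theo:PMT.3D} to the closed--manifold rigidity Theorem \ref{theo:main.3D} via the Lohkamp compactification trick \cite{Lohkamp99}, arranging the deformation so that the singular set $\cS$ and its edge structure are untouched throughout. I will first establish $m \geq 0$ by contradiction, and then treat the rigidity case $m = 0$ separately.

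\textbf{Nonnegativity.} Assume for contradiction that some end of $(M,g)$ has ADM mass $m < 0$. Since $\cS$ is compact and $\cS \subset M \setminus \partial M$, the far asymptotic region $E$ of that end is smooth and disjoint from $\cS \cup \partial M$. If $\partial M \neq \emptyset$, first double $(M,g)$ across $\partial M$, using the mean-convexity assumption exactly as in \cite{Miao02} to produce a reflection-symmetric metric of the same regularity whose scalar curvature remains $\geq 0$ distributionally and whose doubled singular set is again a nondegenerate $1$-skeleton with the same edge structure. Next, apply Lohkamp's local deformation on $E$: using $m<0$ it produces a metric $\tilde g$ which coincides with $g$ on a large compact neighborhood of $\cS$, is exactly Euclidean outside a compact subset of $E$, and satisfies $R(\tilde g) \geq 0$ on $M \setminus \cS$ with $R(\tilde g) > 0$ on an open set of $E$. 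Since $\tilde g$ is flat near infinity, glue the Euclidean end to a sufficiently large flat $3$-torus $T^3$ minus a ball, via an isometric identification of annular flat regions, obtaining a closed oriented $3$-manifold $\hat M$ which is (up to connect summing with $S^2\times S^1$ if multiple ends are present) homeomorphic to the original doubled manifold $\#\, T^3$. The resulting metric $\hat g$ lies in $L^\infty(\hat M) \cap C^{2,\alpha}_{\loc}(\hat M \setminus \cS)$, keeps the edge structure of $g$ along $\cS$ intact, satisfies $R(\hat g)\geq 0$ on $\hat M \setminus \cS$ and $R(\hat g)>0$ somewhere. Because $\hat M$ contains a $T^3$ summand, \cite{GromovLawson84} gives $\sigma(\hat M) \leq 0$. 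Theorem \ref{theo:main.3D} then forces $\hat g$ to be flat on $\hat M$, contradicting $R(\hat g)>0$ on an open set. Therefore $m \geq 0$.

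\textbf{Rigidity.} Suppose now that $m = 0$ for some end. If $R(g) \not\equiv 0$ on $M \setminus \cS$, I would perform a conformal deformation $g_\epsilon = u_\epsilon^4 g$ with $u_\epsilon = 1 - \epsilon G + O(\epsilon^2)$, where $G$ is the positive Green's function at infinity of the conformal Laplacian $L_g = -8\Delta_g + R(g)$ on $M \setminus \cS$ with the appropriate asymptotic expansion. A direct computation of the ADM mass expansion yields
\[ m(g_\epsilon) = m(g) - c\,\epsilon + O(\epsilon^2) \]
with $c > 0$ as soon as $R(g) \not\equiv 0$, so $m(g_\epsilon) < 0$ for small $\epsilon > 0$, contradicting the nonnegativity established above applied to $g_\epsilon$. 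Hence $R(g) \equiv 0$ on $M \setminus \cS$. A further first variation of mass along compactly supported symmetric $2$-tensor perturbations $g + t h$ with $\supp h \subset\subset M \setminus (\cS \cup \partial M)$, combined with $m(g+th) \geq 0$ and $m(g) = 0$, forces $\Ric(g) \equiv 0$ on $M \setminus \cS$. In dimension three this gives $g$ flat on $M \setminus \cS$. Completeness of $(M, g)$, asymptotic flatness, and the cone-angle bound $2\pi(\beta+1)\leq 2\pi$ along $\reg\cS$ then preclude any nontrivial edge (a positive cone-angle defect would obstruct the developing map into $\RR^3$ on a small loop around $\reg\cS$), so $\cS = \emptyset$, $\partial M = \emptyset$, and $(M,g) \cong (\RR^3, \delta)$.

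\textbf{Main obstacle.} The Lohkamp deformation, the torus compactification, and the Yamabe--type computation in the nonnegativity step take place in the smooth asymptotic end, well away from $\cS$, and essentially carry over from the smooth case verbatim; the substantive new work lies in Theorem \ref{theo:main.3D} itself, which I am allowed to invoke. The genuinely delicate step is therefore the rigidity case: one must construct and control the Green's function of $L_g$ on the $L^\infty$ singular manifold (verifying its asymptotic expansion at infinity) and then rule out $\cS \neq \emptyset$ after establishing interior flatness. Both tasks depend on the $\eta$-regular edge hypothesis with $\eta > \tfrac{2}{3}$ and cone angles $\leq 2\pi$, which provides enough elliptic control to push standard Schoen--Yau arguments through and enough geometric control (nonpositive curvature defect at edges) to conclude via a developing-map argument that a complete flat metric with these edges must in fact be smooth Euclidean.
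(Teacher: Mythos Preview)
Your approach is genuinely different from the paper's. The paper never invokes Theorem~\ref{theo:main.3D} or the Lohkamp compactification; instead it runs the desingularization machinery (Lemma~\ref{lemm:smooth.out.edge.singularities}, then the excision Lemma~\ref{lemm:bounded.minimal.surface.existence} and the fill-in of Proposition~\ref{prop:smooth.out.vertices}) directly on the asymptotically flat manifold, passes to a family of \emph{smooth} asymptotically flat metrics $h_\varepsilon$ whose masses converge to $m_{ADM}(g)$, and applies the smooth positive mass theorem to each $h_\varepsilon$. Your route is more economical in that it recycles the closed rigidity theorem wholesale, while the paper's route is constructive and, as a byproduct, proves an intermediate positive mass theorem for metrics with only isolated $L^\infty$ point singularities, which it then feeds back into its rigidity argument.

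There is, however, a genuine gap in your rigidity step. You write that for compactly supported $h$ one has $m(g+th)\ge 0$, and that combining this with $m(g)=0$ forces $\Ric(g)\equiv 0$. But if $\operatorname{supp} h$ is compact, the asymptotics of $g+th$ are identical to those of $g$, so $m(g+th)=m(g)$ for \emph{all} $t$, and the inequality carries no information whatsoever; moreover $R(g+th)$ need not be nonnegative, so the nonnegativity of the mass is not even applicable to $g+th$ directly. The mechanism that actually links the mass variation to $\Ric(g)$ is the one the paper uses (and which goes back to Schoen--Yau): one must first conformally correct $g_t=g+th$ to $\widehat g_t=u_t^{4}g_t$ solving $-8\Delta_{g_t}u_t+R(g_t)_-\,u_t=0$, $u_t\to 1$ at infinity, so that $R(\widehat g_t)\ge 0$, and then compute $\tfrac{d}{dt}\big|_{t=0}m_{ADM}(\widehat g_t)=c\int_M\langle \Ric(g),h\rangle\,d\Vol_g$. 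In the singular setting this requires knowing that the intermediate positive mass inequality holds for metrics with isolated $L^\infty$ singularities (the paper establishes this first), and that $u_t$ has the correct $1-A_t/r$ expansion at infinity, which is nontrivial across $\cS$. A secondary point: when $\partial M\neq\emptyset$, doubling produces a codimension-one Lipschitz interface that is \emph{not} covered by Theorem~\ref{theo:main.3D}, so you would need to insert Miao's codimension-one smoothing (which only yields $R_-$ small in $L^{n/2+\delta}$, not $R\ge 0$) and then absorb that via Lemma~\ref{lemm:lq.positive.scalar.curvature} before you can invoke Theorem~\ref{theo:main.3D}. Finally, your developing-map endgame is plausible but not free: the paper instead first forces the cone angles to be exactly $2\pi$ via the scalar-curvature concentration estimate in Lemma~\ref{lemm:smooth.out.edge.singularities}(5), so that $g$ is already $C^{2,\alpha}$ across $\reg\cS$, and then removes the remaining point singularities using the Smith--Yang theorem for Einstein metrics.
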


\begin{rema}
    Our proofs of Theorems \ref{theo:main.highD}, \ref{theo:main.3D}, \ref{theo:PMT.highD}, \ref{theo:PMT.3D} make use of fine desingularizations in the spirit of Miao \cite{Miao02}. This constructive approach has been pursued in part for reasons of compatibility with the Sormani-Wenger \cite{SormaniWenger11} notion of ``intrinsic flat'' distance between Riemannian manifolds, which (see  \cite{Sormani}) work of Gromov \cite{Gromov14} suggests is the ``correct'' notion for taking limits of manifolds with lower scalar curvature bounds. See Section \ref{subsec:intrinsic.flat.distance} for more discussion.
\end{rema}

\vskip 1em

{\bf Acknowledgments.} The authors would like to thank Rick Schoen, Brian White, Rafe Mazzeo, Pengzi Miao, and Or Hershkovits for stimulating conversations on the subject of this paper, as well as Gerhard Huisken, Dan Lee, Andr\'e Neves, Yuguang Shi, and Peter Topping for their interest in this work. The first author would like to thank ETH-FIM for their hospitality, during which part of this work was carried out. The second author would like to thank the Ric Weiland Graduate Fellowship at Stanford, which partially supported the early portion of this research.

\section{Edge singularities} \label{sec:edge.singularities}

The starting point of our discussion is the classical example of \emph{isolated conical singularities} on two-dimensional Riemannian manifolds. 

Assume $M$ is a closed Riemann surface, $\{ p_1, \ldots, p_k \} \subset M$, and $g$ is an  $L^\infty(M) \cap C^2_{\loc}(M \setminus \{p_1, \ldots, p_k\})$ metric. We call $p_i$, $i = 1, \ldots, k$, an isolated conical singularity with cone angle $2\pi(\beta_i+1)$, $\beta_i \in (-1, \infty)$, if around $p_i$ there exist coordinates so that
\begin{equation} \label{eq:cone.singularity.euclidean}
    g = dr^2 + (\beta_i + 1)^2 r^2 d\theta^2.
\end{equation}
See Figure \ref{fig:cone.smoothing} for a graphical illustration of a model isolated conical singularity.

\begin{rema} \label{rema:cone.singularity.complex}
    In complex geometry one often works with the complex variable
    \[ z = [(\beta_i+1)r]^{1/(\beta_i+1)} e^{\sqrt{-1} \theta} \in \CC \setminus \{0\}, \]
    asserting that $g =  |z|^{2\beta_i} |dz|^2$, $z \neq 0$. We will not pursue this here.
\end{rema}

The Gauss-Bonnet formula in this setting of isolated conical singularities is
\begin{equation} \label{eq:2d.gauss.bonnet}
    \int_{M\setminus\{p_1, \ldots, p_k\}} K_g \, d\Area_g - 2\pi \sum_{i=1}^k \beta_i =2\pi \chi(M).
\end{equation}
This can be seen, for instance, by excising arbitrarily small disks around the conical points and taking limits. (See also Lemma \ref{lemm:smooth.out.edge.singularities} below.)

As a straightforward corollary of \eqref{eq:2d.gauss.bonnet}, the presence of conical singularities all of whose cone angles are $\leq 2\pi$ does \emph{not} affect the Yamabe type of $M$. On the other hand, conical singularities with cone angle bigger than $2\pi$ can affect the Yamabe type in the negative. We give an example in Section \ref{subsec:codimension.2.appendix}.

Let's proceed to the more interesting higher dimensional analog. A natural extension of the previous situation to higher dimensions leads to the definition of an edge singularity. Qualitatively, the singular metric $g$ may be viewed as a family of two-dimensional conical metrics along a smooth $(n-2)$-dimensional submanifold.

\begin{defi}[Edge singularities] \label{defi:edge.singularity}
	Let $N^{n-2} \subset M^n$ be a codimension-2 submanifold (without boundary). We call $g$ an $\eta$-regular edge metric along $N$ with data $(\eta, \beta, \sigma, \omega, \varrho, h)$, where $\eta \in (0, \infty)$, $\beta : N \to (-1, \infty)$ is $C^2$, $\sigma$ is a $C^2$ 1-form on $N$, $\omega$ is a $C^2$ metric on $N$, $\varrho : N \to (0, \infty)$ is $C^2$ on $N$, $h$ is a $C^2$ symmetric 2-tensor on $U$, if for some open set $U \supseteq N$,
	\begin{equation} \label{eq:edge.singularity}
	    g = dr^2 + (\beta+1)^2 r^2 (d\theta + \sigma)^2 + \omega + r^{1+\eta} h \text{ on } U \setminus N,
	\end{equation}
	\begin{equation} \label{eq:edge.singularity.neighborhood}
	    \{ (r, \theta, y) : r < \varrho(y), \theta \in \SS^1, y \in N \} \subseteq U,
	\end{equation}
	and
	\begin{multline} \label{eq:edge.singularity.structural}
	    \Vert \beta \Vert_{C^2(N)} + \Vert \sigma \Vert_{C^2(N)} + \Vert \omega \Vert_{C^2(N)} + \Vert (\det \omega)^{-1} \Vert_{C^0(N)} \\
	    + \Vert \varrho \Vert_{C^1(N)} + \Vert \varrho^{-1-\eta} \partial^2 \varrho \Vert_{C^0(N)} + \Vert h \Vert_{C^2(U)} < \infty.
	\end{multline}
	Specifically, we require that $U$ can be covered with Euclidean local coordinate charts $(x^1, x^2, y^1, \ldots, y^{n-2})$, where $r e^{\sqrt{-1} \theta} = x^1 + \sqrt{-1} x^2$ and $(y^1, \ldots, y^n) \in N$, in which
	\[ \Vert \beta \Vert_{L^\infty(N)} + \Vert \partial_i \beta \Vert_{C^0(N)} + \Vert \partial_{i} \partial_j \beta \Vert_{L^\infty(N)} < \infty, \]
	\[ \Vert \sigma_i \Vert_{L^\infty(N)} + \Vert \partial_i \sigma_j \Vert_{L^\infty(N)} + \Vert \partial_{i} \partial_j \sigma_k \Vert_{L^\infty(N)} < \infty, \]
	\[ \Vert (\det \omega_{ij})^{-1} \Vert_{L^\infty(N)} + \Vert \omega_{ij} \Vert_{L^\infty(N)} + \Vert \partial_i \omega_{jk} \Vert_{L^\infty(N)} + \Vert \partial_{i} \partial_j \omega_{k\ell} \Vert_{L^\infty(N)} < \infty, \]
	\[ \Vert h_{\alpha \beta} \Vert_{L^\infty(U)} + \Vert \partial_\alpha \omega_{\beta \gamma} \Vert_{L^\infty(Y)} + \Vert \partial_{\alpha} \partial_\beta \omega_{\gamma \delta} \Vert_{L^\infty(U)} < \infty, \]
	\[ \Vert \varrho \Vert_{L^\infty} + \Vert \partial_i \varrho \Vert_{L^\infty(N)} + \Vert \varrho^{-\eta} \partial_i \partial_j \varrho \Vert_{L^\infty(N)} < \infty. \]
	Latin indices only run through $(y^1, \ldots, y^{n-2})$ on $N$, while Greek indices run through all coordinates $(x^1, x^2, y^1, \ldots, y^{n-2})$ on $U$. 
\end{defi}

This definition is taken from  \cite[(1.1)-(1.2)]{AtiyahLebrun13}, and has corresponding analogs in K\"ahler-Einstein geometry. The $\varrho$ structural requirement did not appear in \cite{AtiyahLebrun13}, which only considered compact manifolds, but it is needed here for our general smoothing procedure in case $N$ is noncompact. (Notice that the $\varrho$-requirement is trivially true when $N$ is compact.) It is a mild requirement that stipulates that our domain of validity of the cone expansion does not degenerate too wildly near the endpoints.

We conclude our collection of definitions with the notion of skeletons:

\begin{defi}[Skeletons] \label{defi:nondegenerate.arrangement}
	We say that a compact subset $\cS \subset M$ is an $(n-2)$-skeleton if $\cS = N_1 \cup \cdots \cup N_k$, where $N_1, \ldots, N_k \subset M$ are compact submanifolds-with-boundary (possibly empty), each with dimension $\leq n-2$, and which are such that $N_\ell \cap N_\ell' \subset \partial N_\ell \cup \partial N_\ell'$ for all $\ell, \ell'$. We denote
	\begin{multline*}
		\reg \cS := \bigcup \Big\{ \cS \cap W : W \subset U \text{ is open and } \cS \cap W \text{ is a smooth} \\
			(n-2)-\text{dimensional submanifold (without boundary)} \Big\},
	\end{multline*}
	and $\sing \cS := \cS \setminus \reg \cS$. A skeleton $\cS$ is said to be \emph{nondegenerate} if there are no two inner-pointing conormals of $\partial N_\ell \subset N_\ell$ and $\partial N_\ell \subset N_\ell$ ($\ell \neq \ell'$) that coincide.
\end{defi}

One could ostensibly also want to allow higher stratum singularities (i.e., codimension-1) away from $\cS$ (e.g., in the spirit of Miao \cite{Miao02}, \cite{ShiTam16}). We do not pursue this direction in this paper.

\section{Smoothing edge singularities, I} \label{sec:smoothing.edge.singularities}

We will prove the following smoothing lemma.

\begin{lemm} \label{lemm:smooth.out.edge.singularities}
	Let $W \subset M$ be a precompact open set containing a nondegenerate $(n-2)$-skeleton $\cS \subset M$, and suppose that $g \in C^{2,\alpha}_{\loc}(W \setminus \cS)$, $\alpha \in [0,1]$, is an $\eta$-regular edge metric along $\reg \cS$ with data $(\eta, \beta, \sigma, \omega, \varrho, h)$ satisfying
	\begin{equation} \label{eq:smooth.out.edge.singularities.structural.i}
	    0 < \Lambda^{-1} \leq \inf_{\reg \cS} 2\pi (\beta+1) \leq \sup_{\reg \cS} 2\pi(\beta+1) \leq 2\pi,
	\end{equation}
	and
	\begin{multline} \label{eq:smooth.out.edge.singularities.structural.ii}
	    (\eta - 2 + \tfrac{4}{n})^{-1} + \Vert (\det \omega)^{-1} \Vert_{L^\infty} + \sum_{j=1}^2 \Vert \partial^j \beta \Vert_{L^\infty} + \sum_{j=0}^1 \Vert \partial^j \varrho \Vert_{L^\infty} \\
	    + \Vert \varrho^{-\eta} \partial^2 \varrho \Vert_{L^\infty} + \sum_{j=0}^2 + \Vert \partial^j \sigma \Vert_{L^\infty} + \Vert \partial^j \omega \Vert_{L^\infty} + \Vert \partial^j h \Vert_{L^\infty} \leq \Lambda.
	\end{multline}  
	See Definition \ref{defi:edge.singularity} for the notation. If $R(g) \geq 0$ on $W \setminus \cS$, then for every $W' \subset \subset W$ containing the $\varrho$-normal tubular neighborhood of $\reg \cS$ and every $\gamma > 0$,  there exist
	\begin{multline*}
	    \varepsilon_1 = \varepsilon_1(n, \Lambda, \gamma, \dist_g(W', \partial W)), \; c_1 = c_1(n, \Lambda, \dist_g(W', \partial W)), \\
	    \delta = \delta(n, \Lambda, \dist_g(W', \partial W)) > 0,
	\end{multline*}
	such that for every $\varepsilon \in (0, \varepsilon_1]$, there is a metric $\widehat{g}_\varepsilon$ on $W$ such that:
	\begin{enumerate}
	    \item $\widehat{g}_\varepsilon$ is $C^{2,\alpha}_{\loc}(W \setminus \sing \cS)$;
	    \item $\widehat{g}_\varepsilon = g$ on $W \setminus (W' \cap B_\varepsilon^g(\reg \cS))$;
	    \item $\|R(\widehat{g}_\varepsilon)_-\|_{L^{\frac{n}{2}+\delta}(W,g)} \leq \gamma$;
        \item $c_1^{-1} g \leq \widehat{g}_\varepsilon \leq c_1 g$ on $W$;
	    \item if $p \in \reg \cS$ is such that $\beta(p) < 0$ and $\mu > 0$ is such that
	        \[ |\beta(p)|^{-1} + \varrho(p)^{-1} \leq \mu, \]
	        then
	        \[ R(\widehat{g}_\varepsilon) \geq c_2 \varepsilon^{-2} \]
	        on $B^g_{c_2}(p) \cap B^g_{c_3 \varepsilon}(\reg \cS) \setminus B^g_{c_3 \varepsilon/2}(\reg \cS)$, with
            \begin{align*}
                c_2 & = c_2(n, \Lambda, \dist_g(W', \partial W), \mu) > 0, \\
                c_3 & = c_3(n, \Lambda, \dist_g(W', \partial W), \mu) > 0,
            \end{align*}
            and $B^g_{c_2}(p) \subset W'$.
    \end{enumerate}
\end{lemm}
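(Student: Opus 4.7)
The plan is to smooth the conical singularity fiberwise along $\reg \cS$, capitalizing on the cone-angle hypothesis $\sup 2\pi(\beta+1) \leq 2\pi$, i.e., $\beta \leq 0$. Fix a smooth profile $\varphi : [0,\infty) \to [0,\infty)$ with $\varphi(r) = r$ for $r \geq 1$, $\varphi(0) = 0$, $\varphi'(0) = 1$, and $\varphi$ concave on $[0,1]$. For each $y \in \reg \cS$, rescale to $\varphi_{\varepsilon,y}(r)$ so that $\varphi_{\varepsilon,y}(r) = (\beta(y)+1) r$ for $r \geq \varepsilon$, $\varphi_{\varepsilon,y}(0) = 0$, $\varphi_{\varepsilon,y}'(0) = 1$, and $\varphi_{\varepsilon,y}'' \leq 0$ on $[0,\varepsilon]$ (possible precisely because $\beta+1 \in (0,1]$). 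The resulting planar cap $dr^2 + \varphi_{\varepsilon,y}^2 d\theta^2$ extends smoothly across $r = 0$ and has nonnegative Gauss curvature $K_\varepsilon = -\varphi_{\varepsilon,y}''/\varphi_{\varepsilon,y}$, with $K_\varepsilon \sim \varepsilon^{-2}$ on the transition annulus $r \in [\varepsilon/2,\varepsilon]$ whenever $\beta(y) \leq -\kappa < 0$. I then define $\widehat{g}_\varepsilon$ on $W$ by substituting $\varphi_{\varepsilon,y}(r)$ for $(\beta(y)+1) r$ in the edge normal form of $g$ inside the $\varrho$-tubular neighborhood of $\reg \cS$ intersected with $W'$, and $\widehat{g}_\varepsilon := g$ elsewhere. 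The matching at $r = \varepsilon$ together with the bounds in \eqref{eq:smooth.out.edge.singularities.structural.ii} give (1); the $\varrho$ structural hypothesis and nondegeneracy of $\cS$ keep the modification region away from $\sing \cS$ for $\varepsilon$ small, yielding (2); and comparing $\varphi_{\varepsilon,y}(r)$ with $(\beta(y)+1) r$ shows the warping factors are pinched, giving (4).

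The heart of the proof is the scalar curvature computation. Treating the 2D disks transverse to $\reg \cS$ as fibers over $(N,\omega)$ with $\sigma$ as connection 1-form, a warped-product/O'Neill decomposition gives
\[
    R(\widehat{g}_\varepsilon) = 2K_\varepsilon + R_\omega + \mathcal{E}_\sigma + \mathcal{E}_h,
\]
where $R_\omega$ is the scalar curvature of the base, $\mathcal{E}_\sigma$ collects O'Neill-type cross-terms $\sim |d\sigma|^2 \varphi_{\varepsilon,y}^2 + \text{bdd}$, and $\mathcal{E}_h$ is the error from the perturbation $r^{1+\eta} h$. The bounds \eqref{eq:smooth.out.edge.singularities.structural.ii} render $R_\omega$ and $\mathcal{E}_\sigma$ uniformly bounded on $W$. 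A careful expansion of the curvature in $r$, tracking factors of $g^{\theta\theta} \sim (\beta+1)^{-2} r^{-2}$ against the derivatives of $r^{1+\eta} h$, yields a pointwise bound on $|\mathcal{E}_h|$ whose $L^{n/2+\delta}$ norm near $\reg \cS$ tends to $0$ as $\varepsilon \to 0$, for some small $\delta = \delta(n,\Lambda) > 0$, precisely under the exponent assumption $\eta > 2 - \tfrac{4}{n}$. In the transition annulus where $\beta(y) \leq -\kappa$, the term $2K_\varepsilon \gtrsim \varepsilon^{-2}$ dominates the three bounded error terms once $\varepsilon \leq \varepsilon_1$ is small, giving property (5) with $\mu$-dependent constants.

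For (3), concavity of $\varphi_{\varepsilon,y}$ makes $K_\varepsilon \geq 0$ always, so on the modification region $R(\widehat{g}_\varepsilon)_- \leq C + |\mathcal{E}_h|$ with $C = C(n,\Lambda)$. The $g$-volume of this region is $\lesssim \varepsilon^2 \vol_\omega(\reg \cS \cap W')$, so the bounded negative contribution has $L^{n/2+\delta}$ norm $\lesssim \varepsilon^{2/(n/2+\delta)}$; combined with $\|\mathcal{E}_h\|_{L^{n/2+\delta}} \to 0$, both can be made $\leq \gamma/2$ by choosing $\varepsilon_1$ small. Outside the modification region $R(\widehat{g}_\varepsilon) = R(g) \geq 0$ contributes nothing. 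I expect the main technical obstacle to be the quantitative bound on $\mathcal{E}_h$: the fiberwise cap construction and the O'Neill decomposition are standard, but tracking every term of $R(\widehat{g}_\varepsilon)$ arising from $r^{1+\eta} h$ through the warped frame, ruling out unexpected negative powers of $r$ that would violate $L^{n/2+\delta}$ integrability, and verifying that the sharp threshold is exactly $\eta > 2 - \tfrac{4}{n}$ requires delicate bookkeeping.
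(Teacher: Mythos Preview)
Your capping-off strategy is essentially the same idea as the paper's, and when $\sing\cS=\emptyset$ it works as written. The paper parametrizes the smoothing differently---it warps the \emph{radial} factor, writing $\widetilde g_\varepsilon = f_\varepsilon^2\,dr^2 + r^2(d\theta+\sigma)^2 + (\beta+1)^{-2}\omega$ with $f_\varepsilon$ interpolating between $1$ and $(\beta+1)^{-1}$, then applies a conformal factor $(\beta+1)^2$---and computes scalar curvature by slicing along $\{r=\text{const}\}$ (Gauss plus Jacobi) rather than an O'Neill-type decomposition. Up to a change of radial coordinate these two pictures coincide: your $-\varphi''/\varphi\ge 0$ is the paper's $r^{-1}f^{-3}\partial_r f\ge 0$, and both rely on $\beta\le 0$.

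There is, however, a genuine gap when $\sing\cS\neq\emptyset$. Your profile $\varphi_{\varepsilon,y}$ matches $(\beta+1)r$ only for $r\ge\varepsilon$, so $\widehat g_\varepsilon$ agrees with $g$ at the edge of the $\varrho$-tube only where $\varrho(y)\ge\varepsilon$. Near $\sing\cS$ one has $\varrho(y)\to 0$, so either the gluing fails (discontinuity at $r=\varrho$) or the edge singularity along $\reg\cS$ is left untouched there, violating conclusion (1). The remedy in the paper is to smooth at the \emph{variable} scale $\varepsilon\varrho(y)$ rather than at scale $\varepsilon$; nondegeneracy of the skeleton guarantees one can take $\varrho\sim\dist_g(\cdot,\sing\cS)$. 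This fix has a cost you did not anticipate: once the smoothing scale carries $y$-dependence through $\varrho$, the $y$-derivatives of the warping pick up $\partial\varrho$ and $\partial^2\varrho$ factors, and the ``base'' error terms $R_\omega+\mathcal E_\sigma$ are no longer uniformly bounded but only $O(r^{-2+\eta})$ (this is where the structural bound on $\varrho^{-\eta}\partial^2\varrho$ is used). So the threshold $\eta>2-\tfrac4n$ is needed already for the $h\equiv 0$ model, not just for $\mathcal E_h$, and your bookkeeping for (3) and (5) has to be redone with $r^{-2+\eta}$ in place of the constant $C$.
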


\begin{figure}[htbp] \label{fig:cone.smoothing}
    \centering
    \includegraphics[scale=0.2]{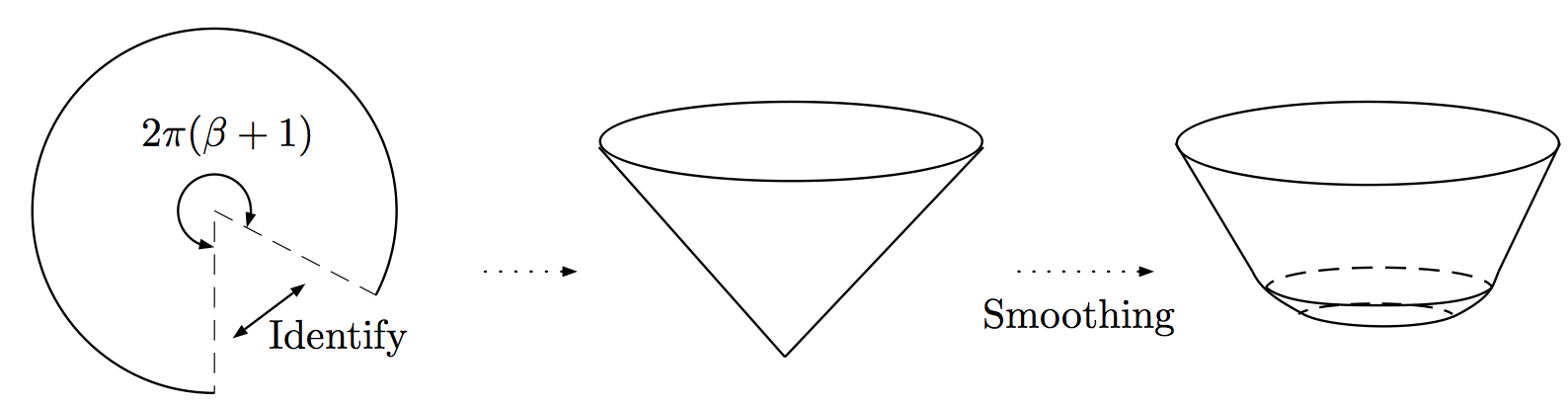}
    \caption{An illustration of a two-dimensional cone metric, and its smoothing procedure (Lemma \ref{lemm:smooth.out.edge.singularities}). Roughly speaking, we glue a flat disk onto the conical singularity, such that the metric is $L^\infty$, and the Gauss curvature is positive in the buffer region.}
\end{figure}

It is worth first looking at the special case in which $\cS = \overline{N}$ for some embedded $(n-2)$-dimensional submanifold $N$ (without boundary), and the edge singularity datum $h$ is identically zero, i.e.,
\begin{equation} \label{eq:smooth.out.edge.singularities.special.case}
    g = dr^2 + (\beta+1)^2 r^2 (d\theta + \sigma)^2 + \omega \text{ in } U \setminus N.
\end{equation}
As we'll see, all the interesting complications already arise in this situation. 

Let's temporarily divert our attention to metrics $\widetilde{g}$ of the computationally simpler form
\begin{equation} \label{eq:smooth.out.edge.singularities.simple.form}
    \widetilde{g} = f^2 dr^2 + r^2 (d\theta + \sigma)^2 + \widetilde{\omega} \text{ on } U \setminus N.
\end{equation}
Here, $f = f(r, y)$. We require the structural conditions
\begin{multline} \label{eq:smooth.out.edge.singularities.structural.iii}
    \Vert (\det \widetilde{\omega})^{-1} \Vert_{L^\infty} + \sum_{j=0}^2 \Vert \partial^j \sigma \Vert_{L^\infty} + \Vert \partial^j \widetilde{\omega} \Vert_{L^\infty}  \\
    + \Vert f^{-1} \Vert_{L^\infty(U)} + \Vert r \partial_r f \Vert_{L^\infty} + \Vert r \partial f \Vert_{L^\infty} + \Vert r^{2-\eta} \partial^2 f \Vert_{L^\infty} \leq \widetilde{\Lambda}.
\end{multline}
All partial derivatives except the one explicitly denoted $\partial_r$ are taken only with respect to $(y^1, \ldots, y^{n-2}) \in N$, but not in the two transversal polar directions. 

\begin{prop} \label{prop:metric.scalar.curvature}
    The scalar curvature $R(\widetilde{g})$ of metrics $\widetilde{g}$ of the form  \eqref{eq:smooth.out.edge.singularities.simple.form}, which are subject to the structural assumptions \eqref{eq:smooth.out.edge.singularities.structural.iii}, satisfies
    \begin{equation} \label{eq:smooth.out.edge.singularities.R.estimate}
        r^{2-\eta} | R(\widetilde{g}) - 2 r^{-1} f^{-3} \partial_r f | \leq c(n, \widetilde{\Lambda}),
    \end{equation}
    at all points $(r, \theta, y^1, \ldots, y^{n-2})$ with $r \leq R_0 = R_0(n, \widetilde{\Lambda})$. 
\end{prop}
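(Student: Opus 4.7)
The plan is to compute $R(\widetilde g)$ directly via Cartan's structural equations in an orthonormal coframe adapted to the edge structure, and then isolate the leading singular contribution as coming from the sectional curvature of the transverse two-dimensional cone factor. I set
\[ \theta^0 := f\, dr, \quad \theta^1 := r(d\theta + \sigma), \quad \theta^a := e^a_N \text{ for } a = 2, \ldots, n-1, \]
where $\{e^a_N\}$ is an $\widetilde\omega$-orthonormal coframe on $N$, pulled back to $U$. Then $\widetilde g = \sum_i \theta^i \otimes \theta^i$, so $\{\theta^i\}$ is orthonormal on $(U \setminus N, \widetilde g)$.

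First I compute the exterior derivatives:
\[ d\theta^0 = -\tfrac{f_a}{f}\, dy^a \wedge \theta^0, \quad d\theta^1 = \tfrac{1}{fr}\,\theta^0 \wedge \theta^1 + r\, d\sigma, \quad d\theta^a = -\omega^a_{N,b} \wedge \theta^b, \]
where $\omega^a_{N,b}$ are the connection forms of $\widetilde\omega$ on $N$, and $f_a := \partial_{y^a} f$. Solving Cartan's first structural equation $d\theta^i + \omega^i{}_j \wedge \theta^j = 0$ by the usual Koszul argument, the connection 1-form between the two transverse directions takes the form
\[ \omega^0{}_1 = -\tfrac{1}{f}(d\theta + \sigma) + E^0{}_1, \]
where $E^0{}_1$ is a combination of $\theta^a$'s ($a \geq 2$) with coefficients involving $f_a/f$, $\sigma$, and $\partial\widetilde\omega$, all of which are controlled via the structural hypotheses $r\partial f, \partial\sigma, \partial\widetilde\omega = O(1)$. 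Similar expressions hold for $\omega^0{}_a$, $\omega^1{}_a$ and for $\omega^a{}_b$ (the latter being perturbations of $\omega^a_{N,b}$ by $r$-suppressed terms from $\sigma$).

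Next I compute the curvature 2-forms $\Omega^i{}_j = d\omega^i{}_j + \omega^i{}_k \wedge \omega^k{}_j$, extract sectional curvatures $K_{ij}$, and form $R(\widetilde g) = 2\sum_{i<j} K_{ij}$. The central observation is that the only contribution scaling like $r^{-2}$ comes from $d(-f^{-1}(d\theta + \sigma))$ in $\Omega^0{}_1$, whose $\theta^0 \wedge \theta^1$ component is exactly $f_r/(rf^3)$ — matching the two-dimensional cone computation, where the metric $f^2 dr^2 + r^2 d\theta^2$ has Gauss curvature $f_r/(rf^3)$. All other contributions to $K_{01}$ — namely $-d\sigma/f$, the wedge terms $\omega^0{}_a \wedge \omega^a{}_1$, and the $y$-derivative pieces $(f_a/rf^2)\, dy^a \wedge \theta^1$ — are estimated using $r\partial f = O(1)$, $r^{2-\eta}\partial^2 f = O(1)$, and the $C^2$ bounds on $\sigma, \widetilde\omega$, yielding an $O(r^{\eta-2})$ bound. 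The sectional curvatures $K_{0a}, K_{1a}, K_{ab}$ for $a, b \geq 2$ are computed analogously; each involves at most one mixed second derivative of $f$ or one derivative of $\sigma$ or $\widetilde\omega$, each bounded by the structural hypotheses, giving a total $O(r^{\eta-2})$ contribution per term. Summing yields
\[ R(\widetilde g) = \tfrac{2 f_r}{r f^3} + O(r^{\eta - 2}), \]
with implicit constant depending only on $n$ and $\widetilde\Lambda$, which is the desired inequality.

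The main obstacle is the bookkeeping in the second step: several terms naively scale like $r^{-2}$ because $\partial_a f$ is only controlled as $O(r^{-1})$. The reason these do not spoil the estimate is that each such term actually appears paired with an additional $r$-factor from the coframe (e.g., $dy^a$ is $O(1)$ in $\theta^b$ basis while the $r$ cancellation comes from the $(fr)^{-1}$ normalization), or else comes from $\partial^2 f$, whose $r^{2-\eta}$-weighted bound directly produces the $O(r^{\eta-2})$ size. The restriction $r \leq R_0(n, \widetilde\Lambda)$ is used to ensure that the cross-term $2r^2 \sigma\otimes d\theta$ and the quadratic term $r^2\sigma\otimes\sigma$ (arising from expanding $r^2(d\theta + \sigma)^2$) remain perturbative relative to the dominant $r^{-2}$-scaling coming from $f_r/(rf^3)$, so that the leading extraction is unambiguous.
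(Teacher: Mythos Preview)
Your moving-frame approach is a valid alternative to the paper's, which proceeds instead by a \emph{double slicing}: foliating first by $N_r = \{r = \text{const}\}$ and combining the traced Gauss equation with the Jacobi (second variation) equation to obtain
\[ R(\widetilde g) = R(\widetilde g|_{N_r}) - 2f^{-1}\partial_r H_{N_r} - 2f^{-1}\Delta_{\widetilde g|_{N_r}} f - H^2_{N_r} - |A_{N_r}|^2, \]
then computing $A_{N_r}$ explicitly in an adapted frame (one finds $|A_{N_r}| = H_{N_r} = (rf)^{-1}$, and these three terms combine to give exactly $2r^{-1}f^{-3}\partial_r f$), and finally slicing $N_r$ again by $\{\theta = \text{const}\}$ to bound $R(\widetilde g|_{N_r})$. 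The chief advantage of the slicing route is that every derivative of $f$ enters \emph{linearly} --- no quadratic-in-$\partial_y f$ terms ever appear, so the $O(r^{-2})$ bookkeeping you flag as the ``main obstacle'' simply does not arise.

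There is, however, a genuine gap in your error analysis. In the Cartan computation of $K_{0a}$ one has $\omega^0{}_a = (F_a/f)\,\theta^0$ with $F_a := e_a(f)$, and quadratic terms $F_aF_b/f^2 = O(r^{-2})$ do show up in both pieces of
\[ d\omega^0{}_a = d(F_a/f)\wedge\theta^0 + (F_a/f)\,d\theta^0. \]
They cancel \emph{exactly}: the quotient-rule contribution $-F_aF_b/f^2$ from $e_b(F_a/f)$ is killed by the matching term from $(F_a/f)\,d\theta^0 = -(F_aF_b/f^2)\,\theta^0\wedge\theta^b$, leaving only $-f^{-1}v_b(v_a(f))$, which together with the wedge terms $\omega^0{}_b\wedge\omega^b{}_a$ assembles into $-f^{-1}\Hess^N_f(v_a,v_a)$. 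This is not a matter of ``additional $r$-factors from the coframe'' as you wrote, nor does it come from the $\partial^2 f$ bound; it is an algebraic cancellation that must be checked explicitly for your argument to go through. Without it your $K_{0a}$ would be $O(r^{-2})$ and the estimate would fail. Once this cancellation is established, your scheme does recover the proposition.
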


\begin{proof}
    We circumvent a brute force computation by a slicing technique motivated by \eqref{eq:codim.1.scalar.curv.formula}. The family of hypersurfaces
    \[ N_r := \{ r = \text{const} \} \cap U, \; r > 0, \]
    forms a codimension-1 foliation of $U \setminus N$, which is orthogonal to the ambient vector field
    \[ \nu_r := f^{-1} \partial_r \]
    with respect to $\widetilde{g}$. In particular, the Gauss equation traced twice over $N_r$ gives:
    \begin{equation} \label{eq:metric.scalar.curvature.gauss}
        R(\widetilde{g}|_{N_r}) = R(\widetilde{g}) - 2 \langle \Ric(\widetilde{g}), \nu_r \otimes \nu_r \rangle + H_{N_r}^2 - |A_{N_r}|^2,
    \end{equation}
    where $H_{N_r}$, $A_{N_r}$ denote the mean curvature and second fundamental form of $N_r \subset (U, \widetilde{g})$. On the other hand, the Jacobi equation implies
    \begin{equation} \label{eq:metric.scalar.curvature.jacobi}
        \partial_r (H_{N_r}) = - \Delta_{\widetilde{g}|N_r} f - (\langle \Ric(\widetilde{g}), \nu_r \otimes \nu_r \rangle, + |A_{N_r}|^2) f.
    \end{equation}
    Together, \eqref{eq:metric.scalar.curvature.gauss}-\eqref{eq:metric.scalar.curvature.jacobi} yield:
    \begin{equation} \label{eq:metric.scalar.curvature}
        R(\widetilde{g}) = R(\widetilde{g}|_{N_r}) - 2 f^{-1} \partial_r (H_{N_r}) - 2 f^{-1} \Delta_{\widetilde{g}|N_r} f - H_{N_r}^2 - |A_{N_r}|^2.
    \end{equation}
    This is the quantity we wish to estimate, written out in terms of the slicing technique. Let's fix $r > 0$ small and estimate the right hand side of \eqref{eq:metric.scalar.curvature}.
    
    Recall that
    \begin{equation} \label{eq:metric.scalar.curvature.induced}
        \widetilde{g}|_{N_r} = r^2 (d\theta + \sigma)^2 + \omega,
    \end{equation}
    and that, by the definition of second fundamental forms (here, $\mathcal{L}$ is the Lie derivative on 2-tensors),
    \begin{equation} \label{eq:metric.scalar.curvature.sff}
        A_{N_r} = \tfrac{1}{2} \mathcal{L}_{\nu_r} \widetilde{g} = \tfrac{1}{2} f^{-1} \mathcal{L}_{\partial_r}  (\widetilde{g}|_{N_r}) = \tfrac{1}{rf} r^2 (d\theta + \sigma)^2.
    \end{equation}
    It will be convenient to pick out vector fields
    \begin{multline*}
        \mathbf{v}_1, \ldots, \mathbf{v}_{n-2} \in \Gamma(TN^{n-2}) \text{ to be an } \\
        \omega \text{-orthonormal frame on } N^{n-2}.
    \end{multline*}
    We emphasize, that these are orthonormal on $N^{n-2}$ with a metric \emph{other} than the model metric $\omega \in \operatorname{Met}(N^{n-2})$. This modified metric was chosen specifically because, now,
    \begin{multline*}
        r^{-1} \partial_\theta, \mathbf{v}_1 - \sigma(\mathbf{v}_1) \partial_\theta, \ldots, \mathbf{v}_{n-2} - \sigma(\mathbf{v}_{n-2}) \partial_\theta \\
        \text{are a } \widetilde{g}_{N_r}\text{-orthonormal frame on } N_r.
    \end{multline*}
    By repeated use of \eqref{eq:metric.scalar.curvature.sff}, we find:
    \begin{equation} \label{eq:metric.scalar.curvature.sff.thetatheta}
        A_{N_r}(r^{-1} \partial_\theta, r^{-1} \partial_\theta) = \tfrac{1}{rf};
    \end{equation}
    \begin{align} \label{eq:metric.scalar.curvature.sff.vv}
        & A_{N_r}(\mathbf{v}_\ell - \sigma(\mathbf{v}_\ell) \partial_\theta, \mathbf{v}_m - \sigma(\mathbf{v}_m) \partial_\theta) \nonumber \\
        & \qquad = A_{N_r}(\mathbf{v}_\ell, \mathbf{v}_m) - \sigma(\mathbf{v}_\ell) A_{N_r}(\partial_\theta, \mathbf{v}_m) \nonumber \\
        & \qquad \qquad - \sigma(\mathbf{v}_m) A(\mathbf{v}_\ell, \partial_\theta) + \sigma(\mathbf{v}_\ell) \sigma(\mathbf{v}_m) A_{N_r}(\partial_\theta, \partial_\theta) \nonumber \\
        & \qquad = \tfrac{r}{f} \sigma(\mathbf{v}_\ell) \sigma(\mathbf{v}_m) - 2 \tfrac{r}{f} \sigma(\mathbf{v}_\ell) \sigma(\mathbf{v}_m) + \tfrac{r}{f} \sigma(\mathbf{v}_\ell) \sigma(\mathbf{v}_m) \nonumber \\
        & \qquad = 0 \text{ for } \ell, m \in \{ 1, \ldots, n-2 \};
    \end{align}
    \begin{align} \label{eq:metric.scalar.curvature.sff.thetav}
        & A_{N_r}(r^{-1} \partial_\theta, \mathbf{v}_\ell - \sigma(\mathbf{v}_\ell) \partial_\theta) \nonumber \\
        & \qquad = A_{N_r}(r^{-1} \partial_\theta,  \mathbf{v}_\ell) - A_{N_r}(r^{-1} \partial_\theta, \sigma(\mathbf{v}_\ell) \partial_\theta) \nonumber \\
        & \qquad = \tfrac{r}{f} \sigma(\mathbf{v}_\ell) - \tfrac{r}{f} \sigma(\mathbf{v}_\ell) \nonumber \\
        & \qquad = 0 \text{ for } \ell \in \{1, \ldots, n-2\}.
    \end{align}
    Altogether, \eqref{eq:metric.scalar.curvature.sff.thetatheta}-\eqref{eq:metric.scalar.curvature.sff.thetav} imply
    \begin{equation*} 
        |A_{N_r}| = H_{N_r} = \tfrac{1}{rf},
    \end{equation*}
    and thus
    \begin{equation} \label{eq:metric.scalar.curvature.i}
        2f^{-1} \partial_r (H_{N_r}) + H_{N_r}^2 + |A_{N_r}|^2 = - 2 r^{-1} f^{-3} \partial_r f.
    \end{equation}
    In particular, three out of five terms in \eqref{eq:metric.scalar.curvature} cancel out. 
    
    Next, we seek to understand $R(\widetilde{g}|_{N_r})$, which denotes the scalar curvature of the $(n-1)$-dimensional manifold $(N_r, \widetilde{g}|_{N_r})$, with $\widetilde{g}|_{N_r}$ given explicitly in \eqref{eq:metric.scalar.curvature.induced}. We re-employ the slicing technique; this time we use the fact that
    \[ N_{r,\theta} := \{ \theta = \text{const} \} \cap N_r \]
    is a codimension-1 foliation of $N_r$, whose induced metrics are given by
    \begin{equation} \label{eq:metric.scalar.curvature.induced.2}
        \widetilde{g}|_{N_{r,\theta}} = \omega + (r\sigma)^2.
    \end{equation}
    If $\nu_{r,\theta} \in \Gamma(TN_r)$ denotes the unit normal vector field to the foliation, then, arguing as before, we have
    \begin{multline} \label{eq:metric.scalar.curvature.theta}
        R(\widetilde{g}|{N_r}) = R(\widetilde{g}|{N_{r,\theta}}) - 2 \langle \nu_{r,\theta}, \partial_\theta \rangle_{\widetilde{g}}^{-1} \mathcal{L}_{\nu_{r,\theta}} (H_{N_{r,\theta}}) \\
        - 2 \langle \nu_{r,\theta}, \partial_\theta \rangle_{\widetilde{g}}^{-1} \Delta_{\widetilde{g}|N_{r,\theta}} \langle \nu_{r,\theta}, \partial_\theta \rangle_{\widetilde{g}} - H_{N_{r,\theta}}^2 - |A_{N_{r,\theta}}|^2.
    \end{multline}
    Note that, unlike the previous slicing application, $\partial_\theta$ is no longer orthogonal to the foliation. Instead, the unit normal vector field $\nu_{r,\theta}$ is proportional to
    \[ \partial_\theta + \sum_{\ell=1}^{n-2} \alpha_\ell \mathbf{v}_\ell \]
    for some coefficients $\alpha_1, \ldots, \alpha_{n-2} : N^{n-2} \to \RR$; the vector fields $\mathbf{v}_\ell$ are the same as before. The coefficients $\alpha_1, \ldots, \alpha_{n-2}$ are such that
    \[ \langle \nu_{r,\theta}, \mathbf{v}_1 \rangle_{\widetilde{g}} = \ldots = \langle \nu_{r,\theta}, \mathbf{v}_{n-2} \rangle_{\widetilde{g}} = 0. \] This is a uniformly invertible $(n-2) \times (n-2)$ linear system for small enough $r \leq r_0 = r_0(n, \widetilde{\Lambda})$. Recalling \eqref{eq:smooth.out.edge.singularities.structural.iii}, the linear system readily implies:
    \begin{equation} \label{eq:metric.scalar.curvature.theta.i}
        \sum_{\ell=1}^{n-2} |\alpha_\ell| + r |\partial \alpha_\ell| + r^2 |\partial^2 \alpha_\ell| \leq c(n, \widetilde{\Lambda}) r^2.
    \end{equation}
    In particular, the unit normal vector field is
    \[ \nu_{r,\theta} = (1 + \zeta) \left( \partial_\theta + \sum_{\ell=1}^{n-2} \alpha_\ell \mathbf{v}_\ell \right), \]
    with 
    \begin{equation} \label{eq:metric.scalar.curvature.theta.ii}
        |\zeta| + r |\partial \zeta| + r^2 |\partial^2 \zeta| \leq c(n, \widetilde{\Lambda}) r^2.
    \end{equation}
    Combined,   \eqref{eq:metric.scalar.curvature.induced.2}, \eqref{eq:metric.scalar.curvature.theta.i}, and \eqref{eq:metric.scalar.curvature.theta.ii}, imply a uniform bound on the right hand side of \eqref{eq:metric.scalar.curvature.theta}. Thus,
    \begin{equation} \label{eq:metric.scalar.curvature.ii}
        |R(\widetilde{g}|N_{r,\theta})| \leq c(n, \widetilde{\Lambda}).
    \end{equation}
    
    Finally, the last remaining term of \eqref{eq:metric.scalar.curvature}, $\Delta_{\widetilde{g}|N_r} f$, can be estimated directly by    \eqref{eq:smooth.out.edge.singularities.structural.iii}:
    \begin{equation} \label{eq:metric.scalar.curvature.iii}
        r^{2-\eta} |\Delta_{\widetilde{g}|N_r} f| \leq c(n, \widetilde{\Lambda}).
    \end{equation} 
    The proposition follows by plugging \eqref{eq:metric.scalar.curvature.i}, \eqref{eq:metric.scalar.curvature.ii}, \eqref{eq:metric.scalar.curvature.iii} into \eqref{eq:metric.scalar.curvature}.
\end{proof}

\begin{proof}[Proof of Lemma \ref{lemm:smooth.out.edge.singularities}]

Let us first see how Proposition \ref{prop:metric.scalar.curvature} fits into our simplified smoothing lemma situation, i.e., $\cS = \overline{N}$ and $h \equiv 0$. Let's fix a smooth cutoff function $\zeta : [0, 1] \to [0, 1]$ such that
\[ \zeta \equiv 0 \text{ on } [0,\tfrac{1}{3}], \; \zeta \equiv 1 \text{ on } [\tfrac{2}{3}, 1], \; 0 \leq \zeta' \leq 6, \; \zeta' = 1 \text{ on } [\tfrac{4}{9}, \tfrac{5}{9}]. \]
Define $f_\ep(r, y)$, $\ep > 0$:
\begin{equation} \label{eq:smooth.out.edge.singularities.f}
    f_\ep(r, y) := 1 + \zeta(\ep^{-1} \varrho(y)^{-1} r)\Big[ (1 + \beta(y))^{-1} - 1 \Big].
\end{equation}
From \eqref{eq:smooth.out.edge.singularities.structural.i}, \eqref{eq:smooth.out.edge.singularities.structural.ii}, and the defining properties of $\zeta$:
\begin{equation} \label{eq:smooth.out.edge.singularities.f.i}
    f_\ep \geq 1, \; 0 \leq r \partial_r f_\ep \leq 6, 
\end{equation}
\begin{equation} \label{eq:smooth.out.edge.singularities.f.ii}
    f_\ep = 1 \text{ for } r \leq \tfrac{1}{3} \ep \varrho, \; f_\ep = (\beta+1)^{-1} \text{ for } r \geq \tfrac{2}{3} \ep \varrho,
\end{equation}
\begin{equation} \label{eq:smooth.out.edge.singularities.f.iii}
    r \partial_r f_\ep(r, y) = (1+\beta)^{-1} - 1 \text{ for } \ep \tfrac{4}{9} \varrho(y) \leq r \leq \tfrac{5}{9} \ep \varrho(y),
\end{equation}
\begin{equation} \label{eq:smooth.out.edge.singularities.f.iv}
    |r \partial f_\ep| + |r^{2-\eta} \partial^2 f_\ep| \leq c(\Lambda).
\end{equation}
Setting
\begin{equation} \label{eq:smooth.out.edge.singularities.eps.regularization}
    \widetilde{g}_\ep := f_\ep^2 dr^2 + r^2 (d\theta + \sigma)^2 + (\beta+1)^{-2} \omega,
\end{equation}
it follows from \eqref{eq:smooth.out.edge.singularities.f.i}-\eqref{eq:smooth.out.edge.singularities.f.iv} and  \eqref{eq:smooth.out.edge.singularities.structural.i}-\eqref{eq:smooth.out.edge.singularities.structural.ii} that $\widetilde{g}_\ep$ is of the form \eqref{eq:smooth.out.edge.singularities.simple.form} and satisfies the structural assumptions \eqref{eq:smooth.out.edge.singularities.structural.iii}.

We'll verify that, for sufficiently small $\ep > 0$, the conformal metric
\[ \widehat{g}_\ep := (\beta+1)^2 \widetilde{g}_\ep \]
is the metric postulated by Lemma \ref{lemm:smooth.out.edge.singularities}. Without loss of generality,
\[ \dist_g(W', \partial W) \geq 1, \; \varrho \leq 1 \text{ on } N. \]
Conclusions (1), (2), (4) of Lemma \ref{lemm:smooth.out.edge.singularities} is an immediate consequence of \eqref{eq:smooth.out.edge.singularities.structural.ii} and the definitions of $\widetilde{g}_\ep$, $\widehat{g}_\ep$. Now we prove conclusion (5). If $p$ is as in the statement of the Lemma, then by Proposition \ref{prop:metric.scalar.curvature} and \eqref{eq:smooth.out.edge.singularities.f.ii},
\begin{equation} \label{eq:smooth.out.edge.singularities.scalar.curvature.i}
    r^{2-\eta} |R(\widetilde{g}_\ep) - 2r^{-2} f_\ep^{-3} ((1+\beta(p))^{-1} - 1)| \leq c,
\end{equation}
whenever $r \in [\tfrac{4}{9} \ep \varrho(p), \tfrac{5}{9} \ep \varrho(p)]$. This readily implies conclusion (5). Finally, we move on to conclusion (3). By Proposition \ref{prop:metric.scalar.curvature}, we have
\[ R(\widetilde{g}_\ep)_- \leq c r^{-2+\eta}, \]
so the conformal metric $\widehat{g}_\ep = (\beta+1)^2 \widetilde{g}_\ep$ satisfies
\[ R(\widehat{g}_\ep) = (\beta+1)^{\tfrac{n+2}{2}} \Big[ \frac{4(1-n)}{n-2} \Delta_{\widetilde{g}_\ep} + R(\widetilde{g}_\ep) \Big] (\beta+1)^{\tfrac{n-2}{2}}. \]
Since $\beta$ has no dependence on $r$, $\theta$, and is uniformly $C^2$ in $(y^1, \ldots, y^{n-2})$:
\[ R(\widehat{g}_\ep)_- \leq c(1 + R_-(\widetilde{g}_\ep)) \leq c(1 + r^{-2+\eta}) \leq c r^{-2+\eta}, \]
where the last inequality follows from our assumption that $\varrho \leq 1$. In particular, if we denote the $\ep \varrho$-tubular neighborhood of $N$ by $U_\ep$, we have, from the coarea formula, that
\begin{align*}
    \Vert R(\widehat{g}_\ep)_- \Vert_{L^q(W,g)}^q & = \Vert R(\widehat{g}_\ep)_- \Vert_{L^q(U_\ep,g)}^q \\
    & \leq c \int_{U_\ep} (r^{-2+\eta})^q d\Vol_{g} \\
    & \leq c \int_N \int_0^{\ep \varrho(y)} r^{q(-2+\eta)+1} \, dr \, d\mu_\omega(y) \\
    & = c \int_N \left[ \tfrac{r^{q(-2+\eta)+2}}{q(-2+\eta)+2} \right]_{r=0}^{\ep \varrho(y)} \, d\mu_\omega(y) \\
    & \leq c (\ep \Vert \varrho \Vert_{C^0(N)})^{q(-2+\eta) + 2},
\end{align*}
provided
\[ q(-2+\eta) + 2 > 0 \iff q < \tfrac{2}{2-\eta}. \]
In the chain of inequalities above, $c$ denotes a constant depending on $n$ and $\Lambda$, which varies from line to line. Since $\eta \geq \Lambda^{-1} + 2 - \tfrac{4}{n}$, it follows that
\[ q < \tfrac{2}{\tfrac{4}{n} - \Lambda^{-1}}, \]
and conclusion (3) follows. This completes the proof of the lemma in the special case when $\cS = \overline{N}$ and $h \equiv 0$.

Let's generalize to allow $h \not \equiv 0$ in
\[ g = dr^2 + (\beta+1)^2 r^2 (d\theta+\sigma)^2 + \omega + r^{1+\eta} h. \]
We will regularize in two steps, leading up to
\[ \widehat{g}_\ep := (\beta+1)^2 \widetilde{g}_\ep + (\beta+1)^2 f_\ep^2 r^{1+\eta} h, \]
where $f_\ep$ is as in \eqref{eq:smooth.out.edge.singularities.f} and $\widetilde{g}_\ep$ as in \eqref{eq:smooth.out.edge.singularities.eps.regularization}. The first step, studying $(\beta+1)^2 \widetilde{g}_\ep$, is the step we carried out above. Now, a crude estimate that relies on \eqref{eq:smooth.out.edge.singularities.structural.iii} shows that when $\xi$ is a $C^2_{\operatorname{loc}}(U \setminus N)$ 2-tensor, which in Euclidean coordinates (recall Definition \ref{defi:edge.singularity}) is controlled by
\[ |\xi_{\alpha \beta}| + r |\partial_\alpha \xi_{\beta \gamma}| + r^2 |\partial_\alpha \partial_\beta \xi_{\gamma \delta}| \leq \epsilon \]
and $\epsilon > 0$ sufficiently small, then
\begin{equation} \label{eq:metric.scalar.curvature.crude.estimate}
    r^2 |R\big( (\beta+1)^2 \widetilde{g}_\ep + \xi \big) - R\big( (\beta+1)^2 \widetilde{g}_\ep \big)| \leq c(n, \Lambda) \epsilon.
\end{equation}
But note that
\[ \xi := \widehat{g}_\ep - (\beta+1)^2 \widetilde{g}_\ep = (\beta+1)^2 f_\ep^2 r^{1+\eta} h \] satisfies
\[ |\xi_{\alpha \beta}| + r |\partial_\alpha \xi_{\beta \gamma}| + r^2 |\partial_\alpha \partial_\beta \xi_{\gamma \delta}| \leq c(n, \Lambda) r^\eta, \]
and $\eta > 0$, which applied to \eqref{eq:metric.scalar.curvature.crude.estimate} tells us that $R(\widehat{g}_\ep)$ has precisely the same behavior now as in \eqref{eq:smooth.out.edge.singularities.scalar.curvature.i}, so the result follows as before.

%we incur precisely the same error bound as earlier in  \eqref{eq:smooth.out.edge.singularities.R.estimate}, so the estimates go through verbatim in this general case.

Finally, we deal with the most general case, where $g$ can be of general edge type, and the skeleton $\cS$ consists of more than just one piece; i.e., $\cS = \overline{N}_1 \cup \ldots \cup \overline{N}_k$. Since we're assuming $\cS$ is nondegenerate, it follows that the pieces $N_1, \ldots, N_k$ can be separated from each other with $\varrho$-tubular neighborhoods that decay with
\begin{equation} \label{eq:smooth.out.edge.singularities.component.dgeneration}
    \varrho \sim \dist_g(\cdot, \partial N_1 \cup \ldots \cup \partial N_k).
\end{equation}
In particular, we may apply the lemma to each component $N_1, \ldots, N_k$ individually with a modified $\Lambda$ that also accounts for the linear decay \eqref{eq:smooth.out.edge.singularities.component.dgeneration}, and then glue all the metrics together since they agree away from their degenerating tubular neighborhoods by virtue of the rightmost equality in \eqref{eq:smooth.out.edge.singularities.f.ii}.
\end{proof}

\section{Almost positive scalar curvature} \label{sec:lq.positive.scalar.curvature}

The following lemma will play a key and recurring role in this work, stating that $C^{2,\alpha}_{\loc} \cap L^\infty$ metrics with little negative scalar curvature and sufficiently much positive scalar curvature are conformally equivalent to metrics with positive scalar curvature of the same regularity.

\begin{lemm} \label{lemm:lq.positive.scalar.curvature}
	Suppose $M^n$ is closed, $g_0$ is a smooth background metric on $M$, $g$ is an $L^\infty(M) \cap C^{2,\alpha}_{\loc}(M \setminus \cS)$, $\alpha \in (0, 1)$, $\cS \subset M$ is compact, $\Vol_g(\cS) = 0$, and $\Lambda^{-1} g_0 \leq g \leq \Lambda g_0$. If $\chi \in C^\alpha_{\loc}(M \setminus \cS) \cap L^q(M, g)$ with $q > \frac{n}{2}$,
	\[ \chi \leq R(g), \; \Vert \chi_- \Vert_{L^{n/2}(M,g)} \leq \delta_0, \]
	then there exists $u \in C^{2,\alpha}_{\loc}(M \setminus \cS) \cap C^0(M)$, $u > 0$, such that
	\begin{multline} \label{eq:lq.positive.scalar.curvature}
		\inf_{M \setminus \cS} u^{\frac{4}{n-2}} R(u^{\frac{4}{n-2}} g) \geq \frac{1}{c_0^2 \Vol_g(M)} \left(\int_M \chi_+ \, d\Vol_g - c_0^4 \int_M \chi_{-}\,d\Vol_g\right)\\  \text{ and } \sup_M u \leq c_0 \inf_M u,
	\end{multline}
	where $\delta_0 = \delta_0(g_0, \Lambda) > 0$, $c_0 = c_0(g_0, \Lambda, q, \Vert \chi \Vert_{L^q(M, g, \Lambda)}) \geq 1$. 
\end{lemm}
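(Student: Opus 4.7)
The strategy is to take $u$ to be (a positive multiple of) the lowest eigenfunction of the modified conformal Laplacian
\[ \widetilde L_g := -\tfrac{4(n-1)}{n-2}\Delta_g + \chi, \]
and set $\widetilde g := u^{4/(n-2)} g$. Since $\chi \leq R(g)$ pointwise, the standard conformal transformation formula gives, on $M \setminus \cS$,
\[ u^{4/(n-2)} R(\widetilde g) = u^{-1}\Bigl[-\tfrac{4(n-1)}{n-2}\Delta_g u + R(g) u\Bigr] \geq u^{-1}\widetilde L_g u, \]
so once we have a positive $u$ with $\widetilde L_g u = \lambda u$, the left-hand side is $\geq \lambda$ everywhere on the regular part. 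The goal then reduces to producing $u$ and bounding $\lambda$ from below in terms of $\int \chi_\pm$.

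For existence, I would consider the Rayleigh quotient
\[ \lambda := \inf\Bigl\{\textstyle\int_M \tfrac{4(n-1)}{n-2}|\nabla_g \phi|^2 + \chi \phi^2 \, d\Vol_g \,:\, \phi \in H^1(M),\ \int_M \phi^2 \, d\Vol_g = 1 \Bigr\}. \]
Because $\Lambda^{-1} g_0 \leq g \leq \Lambda g_0$, the Sobolev space and Sobolev inequality on $(M,g)$ are equivalent to those on $(M,g_0)$, with a constant depending only on $(g_0, \Lambda)$. By H\"older and Sobolev,
\[ \int_M \chi_- \phi^2 \, d\Vol_g \leq \Vert \chi_- \Vert_{L^{n/2}(M,g)} \Vert \phi \Vert_{L^{2n/(n-2)}(M,g)}^2 \leq C(g_0,\Lambda)\,\delta_0\, \Vert \phi \Vert_{H^1}^2, \]
so for $\delta_0 = \delta_0(g_0, \Lambda)$ small enough the functional is coercive on $H^1$, and the direct method yields a minimizer $u$. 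Standard first-eigenfunction manipulations ($|u|$ is also a minimizer) let us take $u \geq 0$, and $u$ satisfies $\widetilde L_g u = \lambda u$ weakly.

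Next I invoke regularity theory for divergence-form elliptic equations. The potential $\chi - \lambda \in L^q(M, g)$ with $q > n/2$, and the coefficients $g^{ij}\sqrt{\det g}$ are uniformly elliptic bounded measurable (by the $L^\infty$ closeness of $g$ to $g_0$). De Giorgi-Nash-Moser theory then gives $u \in C^0(M)$ globally, together with Moser's Harnack inequality
\[ \sup_M u \leq c_0 \inf_M u, \qquad c_0 = c_0(g_0, \Lambda, q, \Vert \chi \Vert_{L^q(M,g)}). \]
In particular $u > 0$ (as $u \equiv 0$ would contradict $\int u^2 = 1$). Away from $\cS$, $g$ is $C^{2,\alpha}_{\loc}$ and $\chi$ is $C^\alpha_{\loc}$, so Schauder upgrades $u$ to $C^{2,\alpha}_{\loc}(M \setminus \cS)$. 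I expect this step, specifically verifying that DGNM/Harnack apply uniformly across the singular set $\cS$, to be the main technical point; it works because $\cS$ is $g$-null and all ingredients (ellipticity, potential integrability) are integral in nature and insensitive to a null set.

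Finally, for the quantitative bound on $\lambda$: integrating $\widetilde L_g u = \lambda u$ over $M$ (the Laplacian term vanishes on the closed manifold) gives
\[ \lambda \int_M u \, d\Vol_g = \int_M \chi u \, d\Vol_g \geq (\inf_M u)\int_M \chi_+ \, d\Vol_g - (\sup_M u)\int_M \chi_- \, d\Vol_g. \]
Whether the right-hand side is positive or negative, dividing by $\int_M u \leq (\sup_M u)\Vol_g(M)$ (using $\int \chi u < 0 \Rightarrow \lambda <0$ in the negative case to preserve the inequality direction) yields
\[ \lambda \geq \frac{1}{c_0 \Vol_g(M)}\Bigl(\int_M \chi_+ \, d\Vol_g - c_0 \int_M \chi_- \, d\Vol_g\Bigr), \]
with $c_0 = \sup u / \inf u$ as above. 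Since $c_0 \geq 1$, this is stronger than the claimed bound in \eqref{eq:lq.positive.scalar.curvature}, and combined with the pointwise inequality $u^{4/(n-2)} R(\widetilde g) \geq \lambda$ from the first paragraph completes the proof.
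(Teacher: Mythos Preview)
Your approach coincides with the paper's: both take $u$ to be the first eigenfunction of $-\tfrac{4(n-1)}{n-2}\Delta_g+\chi$, establish coercivity via Sobolev/H\"older when $\delta_0$ is small, obtain global continuity and the Harnack inequality from De Giorgi--Nash--Moser, and conclude via the conformal transformation formula and $\chi\le R(g)$. The one substantive difference is your derivation of the lower bound on $\lambda$: the paper uses the Rayleigh quotient (dropping the gradient term) to get $\lambda \geq \Vert u\Vert_{L^2}^{-2}\int \chi u^2$, whereas you test the equation against $\phi\equiv 1$ to get $\lambda = (\int u)^{-1}\int \chi u$.

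Your route is in principle slightly sharper, but the final case analysis is not quite right: when $(\inf u)\int\chi_+ - (\sup u)\int\chi_- < 0$ you cannot divide by the \emph{upper} bound $(\sup u)\Vol_g(M)$ on $\int u$ and preserve the inequality direction, and your parenthetical about $\lambda<0$ does not fix this. The clean fix is to bound the two pieces of $(\int u)^{-1}\int\chi u$ separately, using $\int u \leq (\sup u)\Vol_g(M)$ on the positive part and $\int u \geq (\inf u)\Vol_g(M)$ on the negative part; this yields
\[
\lambda \;\geq\; \frac{1}{c_0\,\Vol_g(M)}\Bigl(\int_M\chi_+\,d\Vol_g \;-\; c_0^{\,2}\int_M\chi_-\,d\Vol_g\Bigr),
\]
which is still stronger than the bound in the statement. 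This is exactly the term-by-term maneuver the paper performs with $u^2$ and $\Vert u\Vert_{L^2}^2$ in place of $u$ and $\int u$.
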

\begin{proof}
	We construct, using the direct method, the principal eigenvalue of the operator $- \frac{4(n-1)}{n-2} \Delta_g + \chi$ on $S$. Namely, we minimize
	\begin{equation} \label{eq:lq.positive.scalar.curvature.i}
		\Vert f \Vert_{L^2(M,g)}^{-2} \int_M \frac{4(n-1)}{n-2} \Vert \nabla^g f \Vert_g^2 + \chi |f|^2 \, d\Vol_g,
	\end{equation}
	over $f \in L^2(M, g)$, $f \not \equiv 0$. From the Poincar\'e-Sobolev inequality,
    \begin{align*}
        & \left( \int_M f^{\frac{2n}{n-2}} \, d\Vol_g \right)^{\frac{n-2}{n}} \leq C_1 \int_M \Vert  \nabla^g f \Vert_g^2 + |f|^2 \, d\Vol_g \\
        & \qquad \implies \int_M \Vert \nabla^g f \Vert_g^2 \, d\Vol_g \geq C_1^{-1} \left( \int_M |f|^{\frac{2n}{n-2}} \, d\Vol_g \right)^{\frac{n-2}{n}} - \int_M |f|^2 \, d\Vol_g
    \end{align*}
    for $C_1 = C_1(g_0, \Lambda) > 0$. From H\"older's inequality,
    \[ \int_M \chi |f|^2 \, d\Vol_g \geq - \delta_0 \left( \int_M |f|^{\frac{2n}{n-2}} \, d\Vol_g \right)^{\tfrac{n-2}{n}} \]
    and the lower bound on \eqref{eq:lq.positive.scalar.curvature.i} follows as long as we require $\delta_0$ to be small enough depending on $g_0$, $\Lambda$. From functional analysis, minimizing \eqref{eq:lq.positive.scalar.curvature.i} yields some $u \in W^{1,2}(M, g)$, $u \geq 0$ $g$-a.e. on $M$, that satisfies, for some $\lambda \in \RR$,
    \begin{equation} \label{eq:lq.positive.scalar.curvature.ii}
    	- \frac{4(n-1)}{n-2} \Delta_g u + \chi u = \lambda u \text{ on } M,
    \end{equation}
    in the weak sense. From elementary elliptic PDE theory, $u \in C^{2,\alpha}_{\loc}(M \setminus \cS)$. From De Giorgi-Nash-Moser theory,
    \[ u \in C^{0,\theta}(M), \text{ and } \lambda \geq -\Lambda, \; \Lambda = \Lambda(g_0, \Lambda, q, \Vert \chi \Vert_{L^q(M,g)}) > 0. \]
    (The precise $\theta \in (0, 1)$ isn't relevant.) The inequality
    \begin{equation} \label{eq:lq.positive.scalar.curvature.iii}
    	 \sup_M u \leq c_0 \inf_M u
    \end{equation}
    with $c_0 = c_0(g_0, \Lambda, q, \Vert \chi \Vert_{L^q(M,g)})$ follows from Moser's Harnack inequality. From the variational characterization of \eqref{eq:lq.positive.scalar.curvature.ii} as a minimizer of \eqref{eq:lq.positive.scalar.curvature.i}, and from \eqref{eq:lq.positive.scalar.curvature.iii}, we see that
    \begin{align}
    	\lambda 
    	    & = \Vert u \Vert_{L^2(M,g)}^{-2} \int_M \frac{4(n-1)}{n-2} \Vert \nabla^g u \Vert_g^2 + \chi |u|^2 \, d\Vol_g \nonumber \\
    	    & \geq \Vert u \Vert^{-2}_{L^2(M,g)} \int_M \chi_+ |u|^2 \, d\Vol_g - \Vert u \Vert^{-2}_{L^2(M,g)} \int_M \chi_- |u|^2 \, d\Vol_g \nonumber \\
    	    & \geq \inf_M u^2 \cdot \Vert u \Vert_{L^2(M,g)}^{-2} \int_M \chi_+ \, d\Vol_g - \sup_M u^2 \cdot \Vert u \Vert^{-2}_{L^2(M,g)} \int_M \chi_- \, d\Vol_g \nonumber \\
    		& \geq c_0^{-2} \Vol_g(M,g)^{-1} \left(\int_M \chi_+ \, d\Vol_g- c_0^4 \int_M \chi_{-}\,d\Vol_g\right). \label{eq:lq.positive.scalar.curvature.iv}
    \end{align}
    Thus, from the scalar curvature conformal transformation formula and \eqref{eq:lq.positive.scalar.curvature.ii},
    \begin{align*}
    	R(u^{\frac{4}{n-2}} g) & = u^{-\frac{n+2}{n-2}} \left( - \frac{4(n-1)}{n-2} \Delta_g u + R(g) u \right) \\
    		& = u^{-\frac{4}{n-2}} (R(g)-\chi+\lambda) \geq \lambda u^{-\frac{4}{n-2}} \text{ on } M \setminus \cS,
    \end{align*}
    and the result follows from \eqref{eq:lq.positive.scalar.curvature.iv}.
\end{proof}

We obtain, as a direct corollary, the following rigidity result that extends a well-known (to the experts of the field) result from the smooth case to a general singular setting: nonnegative scalar curvature can be conformally transformed into positive scalar curvature, as long as the original metric isn't scalar-flat.

\begin{coro} \label{coro:positive.scalar.curvature}
	Suppose $M^n$ is closed, $g$ is an $L^\infty(M) \cap C^{2,\alpha}_{\loc}(M \setminus \cS)$ metric, $\alpha \in (0,1)$, $\cS \subset M$ is compact, and $\Vol_g(\cS) = 0$. If $R(g) \geq 0$ on $M \setminus \cS$, and $R(g) \not \equiv 0$, then
	\[ R(u^{\frac{4}{n-2}} g) > 0 \text{ on } M \setminus \cS \]
	for some $u \in C^{2,\alpha}_{\loc}(M \setminus \cS) \cap C^0(M)$, $u > 0$.
\end{coro}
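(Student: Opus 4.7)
The plan is to apply Lemma \ref{lemm:lq.positive.scalar.curvature} to a bounded truncation of $R(g)$ from above. Specifically, set
\[ \chi := \min(R(g), 1) \text{ on } M \setminus \cS, \]
and extend $\chi$ by $0$ on $\cS$ (the values on $\cS$ are irrelevant since $\Vol_g(\cS)=0$). Then $\chi$ is bounded, so $\chi \in L^q(M,g)$ for every $q > n/2$; it is in $C^\alpha_{\loc}(M \setminus \cS)$, as the minimum of a $C^{2,\alpha}_{\loc}$ function with a constant; it is pointwise $\leq R(g)$; and, crucially, $\chi_- \equiv 0$ because $R(g) \geq 0$, so the smallness hypothesis $\Vert \chi_- \Vert_{L^{n/2}(M,g)} \leq \delta_0$ holds automatically, regardless of the value of $\delta_0$.

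Next, I would verify that $\int_M \chi_+ \, d\Vol_g > 0$. Indeed, $R(g) \geq 0$ together with $R(g) \not\equiv 0$ and continuity of $R(g)$ on $M \setminus \cS$ produces a non-empty open subset $V \subset M \setminus \cS$ on which $R(g) > 0$. Since $\Vol_g(\cS) = 0$ and $g$ is uniformly Euclidean, $V$ has positive $g$-measure, whence $\chi_+ = \chi > 0$ on a subset of $V$ of positive measure, so $\int_M \chi_+ \, d\Vol_g > 0$.

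With these hypotheses verified, Lemma \ref{lemm:lq.positive.scalar.curvature} produces $u \in C^{2,\alpha}_{\loc}(M \setminus \cS) \cap C^0(M)$ with $u > 0$ and
\[ \inf_{M \setminus \cS} u^{\frac{4}{n-2}} R(u^{\frac{4}{n-2}} g) \geq \frac{1}{c_0^2 \Vol_g(M)} \int_M \chi_+ \, d\Vol_g > 0, \]
where the negative term in \eqref{eq:lq.positive.scalar.curvature} vanishes because $\chi_- \equiv 0$. Since $u > 0$ pointwise on $M$, dividing through by $u^{4/(n-2)}$ yields $R(u^{4/(n-2)} g) > 0$ on $M \setminus \cS$, completing the proof.

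The only real subtlety is the truncation of $R(g)$: under the hypotheses, $R(g)$ is only $C^{2,\alpha}_{\loc}(M \setminus \cS)$ and could a priori blow up as one approaches $\cS$, so it need not belong to $L^q(M,g)$ for any $q > n/2$. Capping it at $1$ sidesteps this while preserving non-negativity and the non-triviality $\int_M \chi_+ > 0$—everything else is a direct invocation of the preceding lemma.
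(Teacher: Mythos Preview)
Your proposal is correct and is exactly the intended argument: the paper presents this corollary as a direct consequence of Lemma~\ref{lemm:lq.positive.scalar.curvature} without spelling out the details, and your choice $\chi = \min(R(g),1)$ is the natural way to verify the hypotheses (noting, as you do, that the cap is needed because $R(g)$ may be unbounded near $\cS$). The verification that $\min(R(g),1) \in C^\alpha_{\loc}(M\setminus\cS)$ (since $|\min(a,c)-\min(b,c)| \le |a-b|$) and that $\chi_- \equiv 0$ makes the smallness hypothesis vacuous is precisely what makes the corollary immediate.
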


\begin{rema}
    We will later show that for particular kinds of singular behavior, we can construct \emph{everywhere} smooth metrics with positive scalar curvature, at the expense of leaving the conformal class of $g$. This is essentially the content of Theorems \ref{theo:main.3D}, \ref{theo:main.highD}, and Corollary \ref{coro:main.3D}.
\end{rema}

\section{Smoothing edge singularities, II} \label{sec:smooth.out.n2.faces}

\begin{prop} \label{prop:smooth.out.n2.faces}
	Suppose $M^n$ is closed, $\sigma(M) \leq 0$, $\cS \subset M$ is a nondegenerate $(n-2)$-skeleton, and $g \in L^\infty(M) \cap C^{2,\alpha}_{\loc}(M \setminus \cS)$, $\alpha \in [0,1]$. Assume $g$ is an $\eta$-regular edge metric along $\reg \cS$ with $\eta > 2 - \tfrac{4}{n}$ and cone angles
	\[ 0 < \inf_{\reg \cS} 2\pi(\beta+1) \leq \sup_{\reg \cS} 2\pi(\beta+1) \leq 2\pi. \]
	If $R(g) > 0$ on $M \setminus \cS$ and either
	\begin{enumerate}
		\item $R(g) \not \equiv 0$ on $M \setminus \cS$, or
		\item $2\pi (\beta+1) \not \equiv 2\pi$ on $\reg \cS$, 
	\end{enumerate}
	then there exists an $L^\infty(M) \cap C^{2,\alpha}_{\loc}(M \setminus \sing \cS)$ metric $\widetilde{g}$ with
	\[ R(\widetilde{g}) > 0 \text{ on } M \setminus \sing \cS. \]
\end{prop}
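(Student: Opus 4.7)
The plan is to apply Lemma \ref{lemm:smooth.out.edge.singularities} to replace $g$ by a smoothed metric $\widehat{g}_\varepsilon$ that is $C^{2,\alpha}_{\loc}(M\setminus\sing\cS)$, and then conformally deform $\widehat{g}_\varepsilon$ to positive scalar curvature using Lemma \ref{lemm:lq.positive.scalar.curvature}. Concretely, fix $W' \subset\subset W = M$ containing the $\varrho$-tubular neighborhood of $\reg\cS$, together with a small parameter $\gamma > 0$ to be chosen, and feed these into Lemma \ref{lemm:smooth.out.edge.singularities}. The output $\widehat{g}_\varepsilon$ agrees with $g$ outside an $\varepsilon$-thickening of $\reg\cS$, is uniformly $L^\infty$-comparable to $g$, satisfies $\|R(\widehat{g}_\varepsilon)_-\|_{L^{n/2+\delta}(M,g)} \leq \gamma$, and—when $\beta(p)<0$ at some $p\in\reg\cS$—has $R(\widehat{g}_\varepsilon) \geq c_2\varepsilon^{-2}$ on a buffer annulus of $g$-volume $\sim \varepsilon^2$, producing the uniform-in-$\varepsilon$ integral bound $\int R(\widehat{g}_\varepsilon)_+\,d\Vol_g \gtrsim 1$.

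Next, apply Lemma \ref{lemm:lq.positive.scalar.curvature} with singular set $\sing\cS$ (whose $g$-volume is zero, as $\dim\sing\cS \leq n-3$), with $\chi := R(\widehat{g}_\varepsilon)$, and with $q := n/2+\delta$. Hölder's inequality turns the $L^{n/2+\delta}$-smallness of $R(\widehat{g}_\varepsilon)_-$ into $L^{n/2}$-smallness, securing the hypothesis $\|\chi_-\|_{L^{n/2}}\leq\delta_0$. The lemma then yields a positive $u \in C^{2,\alpha}_{\loc}(M\setminus\sing\cS)\cap C^0(M)$ with
\[
R(u^{\frac{4}{n-2}}\widehat{g}_\varepsilon) \;\geq\; \frac{1}{c_0^2\,\Vol_g(M)}\left(\int_M \chi_+\,d\Vol_g - c_0^4\int_M \chi_-\,d\Vol_g\right) u^{-\frac{4}{n-2}}
\]
on $M\setminus\sing\cS$. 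Strict positivity of the right-hand side is where the two alternative hypotheses enter: in Case (1), $R(g)\not\equiv 0$ (combined with the pointwise sign assumption) provides an open set $V\subset M\setminus\cS$ on which $R(g)\geq\epsilon_0>0$; outside the shrinking $\varepsilon$-neighborhood one has $\widehat{g}_\varepsilon=g$, so $\int\chi_+\,d\Vol_g$ admits a uniform lower bound. In Case (2), the buffer contribution described above plays that role. Either way, choosing $\gamma$ small enough makes $\int\chi_+ > c_0^4\int\chi_-$, so $\widetilde{g} := u^{\frac{4}{n-2}}\widehat{g}_\varepsilon$ satisfies $R(\widetilde{g})>0$ on $M\setminus\sing\cS$, and is $L^\infty(M)\cap C^{2,\alpha}_{\loc}(M\setminus\sing\cS)$ since $u$ is continuous and positive.

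The delicate step is Case (2): the concentrated $\varepsilon^{-2}$ scalar curvature on an annulus of volume $\sim\varepsilon^2$ inflates $\|\chi\|_{L^q(M,g)}$, and with it the Harnack constant $c_0$ appearing in Lemma \ref{lemm:lq.positive.scalar.curvature}, as $\varepsilon\to 0$. To absorb the factor $c_0^4$ into the uniform lower bound on $\int\chi_+$, one has to balance the smoothing scale $\varepsilon$ against the $L^{n/2}$-smallness parameter $\gamma$: namely, choose $\gamma$ to decay fast enough (relative to $c_0(\varepsilon)$) as $\varepsilon\to 0$, exploiting the freedom that Lemma \ref{lemm:smooth.out.edge.singularities} grants in the quantifier order (any $\varepsilon\leq\varepsilon_1(\gamma)$ is admissible). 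If needed, one can even pre-truncate $\chi$—say to $\min(R(\widehat{g}_\varepsilon), K)$ outside an enlarged buffer region where one still keeps $\chi$ equal to $R(\widehat{g}_\varepsilon)$ in order to retain the buffer's positive mass—to keep $c_0$ under control while preserving $\int\chi_+\gtrsim 1$. Once this balancing is carried out, the remaining verifications (the comparison $\widehat{g}_\varepsilon\sim g$, the $L^\infty$ bound on $u$, and the $C^{2,\alpha}_{\loc}(M\setminus\sing\cS)$ regularity of $\widetilde{g}$) are standard.
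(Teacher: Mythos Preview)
Your overall strategy---smooth via Lemma~\ref{lemm:smooth.out.edge.singularities}, then conformally correct via Lemma~\ref{lemm:lq.positive.scalar.curvature}---matches the paper's. The gap is in Case~(2), and it is exactly the one you flag but do not close. With $\chi = R(\widehat g_\varepsilon)$, the buffer annulus carries $R(\widehat g_\varepsilon)\sim\varepsilon^{-2}$ on $g$-volume $\sim\varepsilon^2$, so $\|\chi\|_{L^q}^q \gtrsim \varepsilon^{2-2q}\to\infty$ for any $q>1$; hence $c_0=c_0(\|\chi\|_{L^q})$ blows up in an uncontrolled way, and your proposed $\gamma$-$\varepsilon$ balancing cannot be verified without quantitative control of the Moser--Harnack constant as a function of $\|\chi\|_{L^q}$. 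Your fallback truncation also fails as written: if you keep $\chi=R(\widehat g_\varepsilon)$ on the buffer, $\|\chi\|_{L^q}$ still diverges; if you truncate at a fixed level $K$, the buffer contributes only $K\varepsilon^2\to 0$ to $\int\chi_+$, and in Case~(2) (where $R(g)\equiv 0$ away from $\cS$) there is nothing else to beat $c_0^4\int\chi_-$.

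The paper resolves this with a position- and scale-dependent truncation: set $\chi=\phi\big(R(\widehat g_\varepsilon);\zeta(\cdot;\varepsilon)\big)$, where $\phi(x;s)$ caps $x$ at level $2s$, and $\zeta(\cdot;\varepsilon)$ equals $\varepsilon^{-2/q}$ inside $B^g_\varepsilon(\cS)$ and $1$ outside $B^g_{2\varepsilon}(\cS)$. The level $\varepsilon^{-2/q}$ is the exact threshold that makes $\|\chi\|_{L^q}$ \emph{uniformly bounded} (so $c_0$ is fixed once and for all) while still extracting buffer mass $\int_{\text{buffer}}\chi_+\gtrsim \varepsilon^{-2/q}\cdot\varepsilon^2=\varepsilon^{2(q-1)/q}$. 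The negative term, via H\"older and Lemma~\ref{lemm:smooth.out.edge.singularities}(3), is $\lesssim \gamma\,\varepsilon^{2(q-1)/q}$, so both sides scale identically in $\varepsilon$ and a single small choice of $\gamma$ (independent of $\varepsilon$) wins. This $\varepsilon^{-2/q}$ cap is the missing idea; once you insert it, your outline goes through.
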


We need to introduce some more notation. For $s > 0$, define 
\begin{equation} \label{eq:phi.values}
	\phi(\cdot; s) :\RR \to \RR \text{ with } \phi(x; s) = \begin{cases} x \text{ for } x \in (-\infty, s], \\ 2s \text{ for } x \in [3s, \infty), \end{cases}
\end{equation}
with
\begin{equation} \label{eq:phi.conditions}
	\frac{\partial}{\partial x} \phi(x; s) \geq 0 \text{ and } \phi(x; s) \leq x \text{ for all } x \in \RR, s > 0,
\end{equation}
and, for $q \in (\frac{n}{2}, n)$ fixed for the remainder of the paper, and $\varepsilon > 0$,
\begin{equation} \label{eq:zeta.values}
	\zeta(\cdot; \ep) : M \to \RR \text{ with } \zeta(x; \ep) = \begin{cases} 1 \text{ for } x \not \in B^g_{2\ep}(\cS), \\ \ep^{-2/q} \text{ for } x \in B^g_{\ep}(\cS), \end{cases}
\end{equation}
such that
\begin{equation} \label{eq:zeta.conditions}
	|\zeta(x; \ep)| \leq \ep^{-2/q} \text{ for all } x \in M, \ep > 0. 
\end{equation}

\begin{proof}[Proof of Proposition {\ref{prop:smooth.out.n2.faces}}]
	Let $\widehat{g}_\varepsilon$ be as in Lemma \ref{lemm:smooth.out.edge.singularities} above, for a choice of $\gamma > 0$ that is yet to be determined.
	
	\begin{claim}
    	We have
    	\[ \limsup_{\varepsilon \to 0} \Vert \phi(R(\widehat{g}_\epsilon); \zeta(\cdot; \ep)) \Vert_{L^q(M,g)} < \infty. \]
    \end{claim}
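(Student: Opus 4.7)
The plan is to dominate the truncated curvature pointwise by a function of $\zeta$ plus the negative part of $R(\widehat{g}_\ep)$, and then control each piece uniformly in $\ep$ using the tubular-neighborhood estimates built into Lemma \ref{lemm:smooth.out.edge.singularities}.

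First I would establish the elementary pointwise bound
\[
  |\phi(x;s)| \leq 2s + x_- \quad \text{for all } s>0,\, x \in \RR,
\]
which follows immediately from \eqref{eq:phi.values}-\eqref{eq:phi.conditions}: for $x \leq 0$ one has $\phi(x;s)=x$, so $|\phi|=x_-$, while for $x>0$ one has $0 \leq \phi(x;s) \leq 2s$. Applied with $x=R(\widehat{g}_\ep)$ and $s=\zeta(\cdot;\ep)$, the triangle inequality yields
\[
  \|\phi(R(\widehat{g}_\ep); \zeta(\cdot;\ep))\|_{L^q(M,g)} \leq 2 \|\zeta(\cdot;\ep)\|_{L^q(M,g)} + \|R(\widehat{g}_\ep)_-\|_{L^q(M,g)},
\]
so it suffices to bound each term uniformly as $\ep \to 0$.

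For the first term, splitting via \eqref{eq:zeta.values}-\eqref{eq:zeta.conditions},
\[
  \|\zeta(\cdot;\ep)\|_{L^q(M,g)}^q \leq \Vol_g(M) + \ep^{-2}\, \Vol_g(B^g_{2\ep}(\cS)).
\]
Since $\cS$ is an $(n-2)$-skeleton, the standard tubular-neighborhood estimate (applied to each piece $N_\ell$ and to the lower-dimensional strata of $\sing \cS$, whose contributions are $o(\ep^2)$) gives $\Vol_g(B^g_{2\ep}(\cS)) \leq C\ep^2$, and so $\|\zeta(\cdot;\ep)\|_{L^q(M,g)}$ is uniformly bounded.

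For the second term, Lemma \ref{lemm:smooth.out.edge.singularities}(2) implies $\widehat{g}_\ep = g$ off a tube of $g$-width $\lesssim \ep$ around $\reg \cS$; since $R(g)>0$ on $M \setminus \cS$ by hypothesis, $R(\widehat{g}_\ep)_-$ is supported in this narrow tube. I would then invoke the pointwise estimate $R(\widehat{g}_\ep)_- \leq c\, r^{\eta-2}$ (with $r$ the transversal distance to the edge) derived in the final paragraph of the proof of Lemma \ref{lemm:smooth.out.edge.singularities}, and integrate via the coarea formula:
\[
  \|R(\widehat{g}_\ep)_-\|_{L^q(M,g)}^q \leq c \int_{\reg\cS} \int_0^{c\ep\varrho(y)} r^{q(\eta-2)+1}\, dr\, d\mu_\omega(y) \leq c\, \ep^{q(\eta-2)+2},
\]
which stays bounded (and in fact tends to $0$) as long as $q(\eta-2)+2 > 0$, i.e., $q < \tfrac{2}{2-\eta}$.

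The only real obstacle is this integrability constraint on $q$. The hypothesis $\eta > 2-\tfrac{4}{n}$ gives $\tfrac{2}{2-\eta} > \tfrac{n}{2}$, so the interval $(\tfrac{n}{2}, \tfrac{2}{2-\eta})$ is nonempty, and the value $q \in (\tfrac{n}{2}, n)$ that is fixed for the remainder of the paper must be chosen inside this interval. With $q$ so chosen, the two bounds combine to give the claimed uniform $L^q$ bound on $\phi(R(\widehat{g}_\ep); \zeta(\cdot;\ep))$.
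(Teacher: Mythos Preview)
Your proof is correct and follows essentially the same strategy as the paper: bound $|\phi|$ in terms of $\zeta$ and $R(\widehat g_\ep)_-$, then use the codimension-$2$ tube volume estimate $\Vol_g(B^g_{2\ep}(\cS)) \leq C\ep^2$ for the first piece and the smoothing lemma for the second. The paper's write-up differs only cosmetically: instead of the pointwise inequality $|\phi(x;s)|\le 2s+x_-$, it splits the integral region-by-region (negative part, $B^g_{2\ep}(\cS)$, complement), arriving at the same three terms.

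One minor point: for the $R(\widehat g_\ep)_-$ term you dive back into the proof of Lemma~\ref{lemm:smooth.out.edge.singularities} and redo the coarea computation. The paper simply invokes conclusion~(3) of that lemma, $\|R(\widehat g_\ep)_-\|_{L^{n/2+\delta}}\le\gamma$, as a black box; the exponent $q$ fixed in the paper is implicitly this $\tfrac{n}{2}+\delta$, which already lies in $(\tfrac n2,\tfrac{2}{2-\eta})$ by the proof of Lemma~\ref{lemm:smooth.out.edge.singularities}. Your explicit discussion of the constraint $q<\tfrac{2}{2-\eta}$ is correct and makes this compatibility transparent, but you could shorten the argument by citing (3) directly.
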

    
    \begin{rema}
        The $L^q$ norm can be taken with respect to the measure induced by any one of $g$, $\widehat{g}_\varepsilon$, since they are uniformly equivalent by Lemma \ref{lemm:smooth.out.edge.singularities} (4).
    \end{rema}
    
    \begin{proof}[Proof of Claim]
		We estimate the integral by splitting up $M$ into the region of negative scalar curvature (which is controlled by Lemma \ref{lemm:smooth.out.edge.singularities}), the tubular neighborhood $B^g_{2\ep}(\cS)$ (which is controlled by the codimension of $\cS$), and the remainder:
		\begin{align*}
			& \Vert \phi(R(\widehat{g}_\varepsilon); \zeta(\cdot; \varepsilon)) \Vert_{L^q(M,g)}^q \\
			& \qquad \leq \Vert R(\widehat{g}_\varepsilon)_- \Vert_{L^q(M,g)}^q + \int_{B^g_{2\varepsilon}(\cS)} |2 \varepsilon^{-2/q}|^q \, d\Vol_g \\
			& \qquad \qquad + \int_{M \setminus B^g_{2\varepsilon}(\cS)} | \min\{R(\widehat{g}_\varepsilon)_+, 2 \}|^q \, d\Vol_g \\
			& \qquad \leq \gamma^q + 2^q \varepsilon^{-2} \Vol_g(B^g_{2\varepsilon}(\cS)) + 2^q \Vol_g(M),
		\end{align*}
		which is uniformly bounded as $\varepsilon \to 0$ when $\cS$ is an $(n-2)$-skeleton, i.e., one with codimension $\geq 2$.
	\end{proof}
	
	We now apply Lemma  \ref{lemm:lq.positive.scalar.curvature} with $\widehat{g}_\varepsilon$ in place of $g$, $\sing \cS$ in place of $\cS$, and $\chi = \phi(R(\widehat{g}_\varepsilon); \zeta(\cdot; \varepsilon))$. Note that the constant $\delta_0$ in Lemma \ref{lemm:lq.positive.scalar.curvature} is independent of $\varepsilon \to 0$, since the metrics $\widehat{g}_\ep$, $g$ are all uniformly equivalent. It remains to show
	\begin{equation} \label{eq:smooth.out.n2.faces.i}
		\int_M \phi(R(\widehat{g}_\varepsilon); \zeta(\cdot; \varepsilon))_+ \, d\Vol_{\widehat{g}_\varepsilon} - c_0^4 \int_M \phi(R(\widehat{g}_\varepsilon); \zeta(\cdot; \varepsilon))_{-} \, d\Vol_{\widehat{g}_\varepsilon}> 0
	\end{equation}
	for sufficiently small $\varepsilon > 0$.
	
	Separating the positive scalar curvature regions from the negative ones, recalling $R(\widehat{g}_\varepsilon) \geq 0$ on $M \setminus B^g_\varepsilon(\reg \cS)$ by Lemma \ref{lemm:smooth.out.edge.singularities} (2), and $\Vert R_-(\widehat{g}_\varepsilon) \Vert_{L^q} \leq \gamma$ by Lemma \ref{lemm:smooth.out.edge.singularities} (3),
    \begin{align}
    	& \int_M \phi(R(\widehat{g}_\varepsilon); \zeta(\cdot; \varepsilon))_+ \, d\Vol_{\widehat{g}_\varepsilon} - c_0^4 \int_M \phi(R(\widehat{g}_\varepsilon); \zeta(\cdot; \varepsilon))_{-} \, d\Vol_{\widehat{g}_\varepsilon} \nonumber \\
        & \qquad \geq \int_M \phi(R_+(\widehat{g}_\varepsilon); \zeta(\cdot; \varepsilon^{-1})) \, d\Vol_{\widehat{g}_\varepsilon} - c_0^4 \Vert R_-(\widehat{g}_\varepsilon) \Vert_{L^1(M,\widehat{g}_\varepsilon)} \nonumber \\
        & \qquad \geq \int_M \phi(R_+(\widehat{g}_\varepsilon); \zeta(\cdot; \varepsilon^{-1})) \, d\Vol_{\widehat{g}_\varepsilon} - c_0^4\gamma \Vol_{\widehat{g}_\varepsilon} (B^g_\varepsilon(\cS))^{(q-1)/q} \nonumber \\
        & \qquad \geq \int_M \phi(R_+(\widehat{g}_\varepsilon); \zeta(\cdot; \varepsilon^{-1})) \, d\Vol_{\widehat{g}_\varepsilon} - \gamma C_1 \varepsilon^{2(q-1)/q}, \label{eq:smooth.out.n2.faces.ii}
    \end{align}
    where $C_1 = C_1(\cS, g_0, \Lambda) > 0$, and $g_0$ is some fixed background smooth metric on $M$. Note that
    \begin{align*}
        & \liminf _{\varepsilon \to 0} \int_M \phi(R_+(\widehat{g}_\varepsilon); \zeta(\cdot; \varepsilon)) \, d\Vol_{\widehat{g}_\varepsilon} \\
        & \qquad \geq \liminf_{\varepsilon \to 0} \int_{M \setminus B^g_{2\varepsilon}(\reg \cS)} \phi(R_+(g); 1) \, d\Vol_{\widehat{g}_\varepsilon} = \int_M \phi(R(g); 1) \, d\Vol_g.
    \end{align*}
    In particular, if $R(g) \not \equiv 0$ on $M \setminus \cS$, then \eqref{eq:smooth.out.n2.faces.ii} implies \eqref{eq:smooth.out.n2.faces.i}, and we're done.
    
    Alternatively, when $R(g) \equiv 0$ on $M \setminus \cS$, suppose that $\beta \not \equiv 0$ on $\reg \cS$. By Lemma \ref{lemm:smooth.out.edge.singularities} (5), there exists $p$ with
    \[ \phi(R_+(\widehat{g}_\varepsilon); \zeta(\cdot; \varepsilon)) = 2 \varepsilon^{-2/q} \text{ on } B^g_{c_3}(p) \cap B^g_{c_4 \varepsilon}(\reg \cS) \setminus B^g_{c_4 \varepsilon/2}(\reg \cS), \]
    and $B_{c_3}^g(p) \subset U$. Note that 
    \[ \Vol_{\widehat{g}_\varepsilon}(B^g_{c_3}(p) \cap B^g_{c_4 \varepsilon}(\reg \cS) \setminus B^g_{c_4 \varepsilon/2}(\reg \cS)) \geq C_2 \varepsilon^2, \]
    with $C_2 = C_2(\cS, g_0, \Lambda) > 0$ since $\cS$ is an $(n-2)$-skeleton, i.e., it has codimension $\geq 2$. In sight of this, \eqref{eq:smooth.out.n2.faces.ii} implies
    \begin{align*}
    	& \int_M \phi(R(\widehat{g}_\varepsilon); \zeta(\cdot; \varepsilon))_+ \, d\Vol_{\widehat{g}_\varepsilon}- c_0^4 \int_M \phi(R(\widehat{g}_\varepsilon); \zeta(\cdot; \varepsilon))_{-} \, d\Vol_{\widehat{g}_\varepsilon}\\
    	& \qquad \geq 2 \cdot \varepsilon^{-2/q} \cdot C_2 \varepsilon^2 -\gamma C_1 \varepsilon^{2(q-1)/q},
    \end{align*}
    which can be made to be positive provided $\gamma$ is  sufficiently small depending on $\cS$, $g_0$, $\Lambda$.
\end{proof}

\section{Smoothing point singularities, $n = 3$} 

Our method for smoothing point singularities consists of:

\begin{enumerate}
	\item ``blowing up'' the singularity;
	\item excising the asymptotic end produced in the previous step by cutting along a particular minimal surface;
	\item ``filling in'' the holes created in the two previous steps with regions of positive scalar curvature.
\end{enumerate}

Step (1) is inspired from works of Schoen-Yau (e.g. \cite{SchoenYau79, SchoenYau81}). Steps (2) and (3) are inspired by constructions that feature in the second author's recent work with Pengzi Miao \cite{MantoulidisMiao17}. 

A new key necessary ingredient in this work is the following new excision lemma for asymptotic ends with weak regularity at infinity:

\begin{lemm} \label{lemm:bounded.minimal.surface.existence}
    Suppose $g$ is a $C^{2,\alpha}_{\loc}$ metric on $\RR^n \setminus B_1(\mathbf{0})$ with
    \begin{equation} \label{eq:minimal.surface.existence.0}
        \Lambda^{-1} \delta \leq g \leq \Lambda \delta \text{,}
    \end{equation}
    where $\delta$ is the standard flat metric on $\RR^n$ and $\alpha \in [0,1]$. If
    \[ \sC := \{ \Omega \subset \RR^n \text{ bounded open set containing } B_1(\mathbf{0}) \} \text{,} \]
    then $\inf \{ \cH^{n-1}_g(\partial \Omega) : \Omega \in \sC \}$ is attained by some $\Omega \subset B_R(\mathbf{0})$, $R = R(n, \Lambda)$.
\end{lemm}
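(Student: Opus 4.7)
The plan is to minimize the $g$-perimeter by the direct method in the class of Caccioppoli sets, using the uniform equivalence $\Lambda^{-1}\delta \leq g \leq \Lambda\delta$ to import Euclidean geometric-measure-theoretic tools (sets of finite perimeter, BV-compactness, the Euclidean isoperimetric inequality, the co-area formula). First, the infimum
\[ m := \inf_{\Omega \in \sC} \cH^{n-1}_g(\partial \Omega) \]
is finite, since $B_2(\mathbf{0}) \in \sC$ gives $m \leq \Lambda^{(n-1)/2} \cH^{n-1}_\delta(\partial B_2) =: C_0$, depending only on $n$ and $\Lambda$; moreover, this infimum over bounded open sets coincides with the infimum of the $g$-perimeter over Caccioppoli sets containing $B_1(\mathbf{0})$, because smooth open sets are $BV$-dense in the latter class.

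The hardest part, and the entire content of the lemma, is an a priori bound: any competitor $\Omega \in \sC$ with $\cH^{n-1}_g(\partial\Omega) \leq 2C_0$ lies in some ball $B_R(\mathbf{0})$ with $R = R(n, \Lambda)$. My approach is a cut-and-paste differential-inequality argument on the truncations $\Omega \cap B_s(\mathbf{0})$, which for $s > 1$ are themselves in $\sC$. Using the decomposition $\partial(\Omega \cap B_s) = (\partial \Omega \cap B_s) \cup (\Omega \cap \partial B_s)$ modulo null sets, near-minimality of $\Omega$ yields
\[ \cH^{n-1}_g(\partial \Omega \setminus B_s) \leq \cH^{n-1}_g(\Omega \cap \partial B_s) + \text{slack}. \]
Converting to Euclidean Hausdorff measure via the $\Lambda$-equivalence and applying the Euclidean isoperimetric inequality to $E_s := \Omega \setminus B_s$ (whose reduced boundary decomposes as $(\partial \Omega \setminus B_s) \cup (\Omega \cap \partial B_s)$ modulo null sets) gives
\[ c(n,\Lambda) \, W(s)^{(n-1)/n} \leq \cH^{n-1}_\delta(\Omega \cap \partial B_s) + \text{slack}, \qquad W(s) := \Vol_\delta(\Omega \setminus B_s). \]
Since the Euclidean co-area formula gives $-W'(s) = \cH^{n-1}_\delta(\Omega \cap \partial B_s)$, this is an ordinary differential inequality $-W'(s) \geq c(n,\Lambda) W(s)^{(n-1)/n}$ (up to the vanishing slack), whose solution $W(s)^{1/n} \leq W(1)^{1/n} - c'(n,\Lambda)(s-1)$ forces $W(s) = 0$ (i.e., $\Omega \subset B_s$) once $s - 1 \geq c'(n,\Lambda)^{-1} W(1)^{1/n}$. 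The Euclidean isoperimetric inequality applied to $\Omega$ itself bounds $W(1) \leq \Vol_\delta(\Omega) \leq C(n,\Lambda)$ in terms of the uniform $g$-perimeter bound, yielding the desired universal radius $R = R(n,\Lambda)$.

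Once the a priori bound is established, existence is routine. Taking a minimizing sequence $\{\Omega_k\} \subset \sC$ with $\cH^{n-1}_g(\partial\Omega_k) \to m$, one replaces each $\Omega_k$ by a truncation $\Omega_k \cap B_{s_k}(\mathbf{0})$ with $s_k \in (1, R_*)$ chosen by a pigeonhole argument on the co-area identity so that $\cH^{n-1}_g(\Omega_k \cap \partial B_{s_k})$ is small; this arranges $\Omega_k \subset B_{R_*}(\mathbf{0})$ uniformly while keeping the sequence minimizing. The corresponding indicators are then uniformly bounded in $BV(B_{R_*}; \{0,1\})$, so an $L^1$-convergent subsequence can be extracted; the limit is the indicator of a Caccioppoli set $E^*$ with $B_1(\mathbf{0}) \subset E^* \subset B_{R_*}$, and lower semi-continuity of the $g$-perimeter (inherited from the Euclidean case via the $\Lambda$-equivalence) gives $\cH^{n-1}_g(\partial^* E^*) = m$. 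Taking a suitable open representative of $E^*$ yields a minimizer $\Omega^* \in \sC$, and applying the a priori extent bound to $\Omega^*$ upgrades the containment to $B_R(\mathbf{0})$ with $R = R(n, \Lambda)$.
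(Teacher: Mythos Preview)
Your core idea---derive a differential inequality for the exterior volume $W(s)=\Vol_\delta(\Omega\setminus B_s)$ from the coarea formula, the Euclidean isoperimetric inequality, and a cut-and-paste comparison with the truncation $\Omega\cap B_s\in\sC$---is sound and is a genuinely different route from the paper's. The paper slices $\partial\Omega$ (rather than $\Omega$) by spheres, obtaining codimension-two slices $\Sigma_t=\partial\Omega\cap\partial B_t$; it then invokes the Plateau problem for $\Sigma_t$ to define an area function $h(t)$, uses the isoperimetric inequality one dimension down to relate $h(t)^{(n-2)/(n-1)}$ to $\cH^{n-2}_g(\Sigma_t)$, and sets up an ODE for $\int_t^R h^{(n-2)/(n-1)}$. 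Your domain-slicing argument avoids the Plateau problem entirely and is more elementary.

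However, your a priori bound is misstated and, as written, false. The claim that \emph{every} $\Omega\in\sC$ with $\cH^{n-1}_g(\partial\Omega)\leq 2C_0$ lies in $B_R(\mathbf{0})$ fails for, say, $\Omega=B_2(\mathbf{0})\cup B_\rho(\mathbf{x})$ with $\rho$ tiny and $|\mathbf{x}|$ arbitrary. The replacement step only yields
\[
\cH^{n-1}_g(\partial\Omega\setminus B_s)\leq\cH^{n-1}_g(\Omega\cap\partial B_s)+\big(\cH^{n-1}_g(\partial\Omega)-m\big),
\]
and that last term is the slack---it does \emph{not} vanish for a general competitor, so the ODE becomes $-W'\geq c\,W^{(n-1)/n}-c'\cdot\text{slack}$ and only forces $W$ down to $O(\text{slack}^{n/(n-1)})$, not to zero. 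The fix: run the ODE along a minimizing sequence $\Omega_k$ (slack $\epsilon_k\to 0$), concluding $\Vol_\delta(\Omega_k\setminus B_R)\leq C\epsilon_k^{n/(n-1)}$ for a fixed $R=R(n,\Lambda)$; then pigeonhole on $[R,R+1]$ gives $s_k$ with $\cH^{n-1}_\delta(\Omega_k\cap\partial B_{s_k})\leq C\epsilon_k^{n/(n-1)}$, so the truncations $\Omega_k\cap B_{s_k}\subset B_{R+1}$ form a minimizing sequence in a fixed ball, and BV-compactness applies. Your final paragraph's pigeonhole on $(1,R_*)$ alone, without the ODE step to first squeeze the exterior volume, cannot produce a fixed $R_*$ while keeping the sequence minimizing.
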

\begin{proof}
    First, by a direct comparison argument, we have
    \begin{equation} \label{eq:bounded.minimal.surface.existence.i}
        \inf \{ \cH^{n-1}_g(\partial \Omega) : \Omega \in \sC \} \leq \cH^{n-1}_g(\partial B_1(\mathbf{0})) \leq c_1(n, \Lambda).
    \end{equation}
    Let $\{ \Omega_i \}_{i=1,2,\ldots} \subset \sC$ be a minimizing sequence of domains for the left hand side of \eqref{eq:bounded.minimal.surface.existence.i}, for each of which we denote
    \[ r_i := \inf \{ |\mathbf{x}| : \mathbf{x} \in \partial \Omega_i \}. \]
    Here, $|\mathbf{x}|$ denotes the Euclidean length of a position vector $\mathbf{x} \in \mathbf{R}^n$. By another direct comparison and the area formula on $(\RR^n, \delta)$,
    \[ \cH^{n-1}_g(\partial \Omega_i) \geq c_2'(n, \Lambda) \cH^{n-1}_\delta(\partial \Omega_i) \geq c_2'(n, \Lambda) \cH^{n-1}_\delta(\partial B_{r_i}(\mathbf{0})) = c_2''(n, \Lambda) r_i^{n-1}, \]
    which together with \eqref{eq:bounded.minimal.surface.existence.i} implies
    \begin{equation} \label{eq:bounded.minimal.surface.existence.ii}
        r_i \leq c_2(n, \Lambda) \text{ for all $i = 1, 2, \ldots$}
    \end{equation}
    For convenience, denote
    \[ r := \liminf_{i\to \infty} r_i \in [1, \infty), \]
    where the finiteness is a byproduct of  \eqref{eq:bounded.minimal.surface.existence.ii}. Pass to a subsequence that attains the $\liminf$. For that subsequence, let
    \[ R_i := \sup \{ |\mathbf{x}| : \mathbf{x} \in \partial \Omega_i \} \]
    and
    \[ R := \limsup_{i \to \infty} R_i \in [1, \infty]. \]
    Without loss of generality, $R > r$. We seek to estimate $R$ from above. Pass to yet another subsequence that attains the $\limsup$. 
    
    By a compactness argument, there will exist a closed $\Omega \subset \RR^n \setminus B_1(\mathbf{0})$ containing $\partial B_1(\mathbf{0})$ such that, by definition of $r$, $R$,
    \begin{equation} \label{eq:bounded.minimal.surface.existence.iii}
        \Sigma_t := \partial \Omega \cap \partial B_t(\mathbf{0}) \neq \emptyset \text{ for al } t \in (r, R),
    \end{equation}
    and, by \eqref{eq:bounded.minimal.surface.existence.i},
    \begin{equation} \label{eq:bounded.minimal.surface.existence.iv}
        \cH^{n-1}_g(\partial \Omega) \leq c_1(n, \Lambda).
    \end{equation}
    For each $t \in (r, R)$, let $h(t)$ denote the $\cH^{n-1}_g$-measure of the solution of the Plateau problem with prescribed boundary $\Sigma_t$; this is guaranteed to be nonzero by \eqref{eq:bounded.minimal.surface.existence.iii}. We do not concern ourselves with the technicalities behind the existence of a feasible minimizer in the Plateau problem--- we are content with the existence of a competitor with $\cH^{n-1}_g(\cdot) \leq 2h(t)$, which is guaranteed, for instance, by the deformation theorem.
    
    By \eqref{eq:minimal.surface.existence.0} and the isoperimetric inequality on $(\RR^n, \delta)$,
    \begin{equation} \label{eq:bounded.minimal.surface.existence.v}
        h(t)^{\tfrac{n-2}{n-1}} \leq c_3(n, \Lambda) \cH^{n-2}_g(\Sigma_t) \text{ for all } t \in (r, R).
    \end{equation}
    Moreover, we claim that
    \begin{equation} \label{eq:bounded.minimal.surface.existence.vi}
        2h(t) \geq \cH^{n-1}_g(\partial \Omega \setminus B_t(\mathbf{0})) \text{ for all } t \in (r, R);
    \end{equation}
    indeed, if this were false for some $t$, then a direct replacement could produce $\Omega' \in \sC$ with $\cH^{n-1}_g(\partial \Omega') < \cH^{n-1}_g(\partial \Omega) = c_1$, violating \eqref{eq:bounded.minimal.surface.existence.i}.
    
    The coarea formula, \eqref{eq:bounded.minimal.surface.existence.v}, and \eqref{eq:bounded.minimal.surface.existence.vi}, give:
    \begin{align*}
        2h(t) & \geq \cH^{n-1}_g(\partial \Omega \setminus  B_t(\mathbf{0})) \\
            & \geq \int_t^R \int_{\Sigma_s} |\nabla^T \dist_g(\mathbf{0}; \cdot)|^{-1} \, d\cH^{n-2}_g \, ds \\
            & \geq \int_t^R \cH^{n-2}_g(\Sigma_s) \, ds \\
            & \geq c_3^{-1} \int_t^R h(s)^{\tfrac{n-2}{n-1}} \, ds;
    \end{align*}
    the second equality follows from $|\nabla^T \dist(\mathbf{0}; \cdot)| \leq 1$. In other words, if $H(t)$ denotes the ultimate integral that appears above, we've shown that
    \[ |H'(t)|^{\tfrac{n-1}{n-2}} \geq (2c_3)^{-1} H(t) \text{ for all } t \in (r, R). \]
    In fact, since $H' \leq 0$, we get
    \[ -H'(t) \geq c_4(n, \Lambda) H(t)^{\tfrac{n-2}{n-1}} \text{ for all } t \in (r, R). \]
    Integrating, we find that there exists $R^\star = R^\star(n, \Lambda, r) \leq R^\star(n, \Lambda)$ such that $H(t) = 0$ for all $t \in [R^\star, R)$. This violates \eqref{eq:bounded.minimal.surface.existence.iii} unless $R \leq R^\star$, giving us an a priori bound on $R$.
    
    Finally, the finiteness of $R$ shows that the minimizing sequence $\Omega_i$ is trapped inside a fixed annulus, and the desired conclusion follows from standard compactness theorems in geometric measure theory.
\end{proof}

\begin{prop} \label{prop:smooth.out.vertices}
    Suppose $n = 3$, $\cS \subset M$ is finite, $\widetilde{g}$ is an $L^\infty(M) \cap C^{2,\alpha}_{\loc}(M \setminus \cS)$ metric, $\alpha \in (0,1)$, and $R(\widetilde{g}) > 0$ on $M \setminus \cS$. Then, there exists a $C^{2,\alpha}(M)$ metric $\overline{g}$ with $R(\overline{g}) > 0$ everywhere; i.e., $\sigma(M) > 0$.
\end{prop}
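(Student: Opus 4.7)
The plan is to carry out the three-step strategy outlined at the start of the section. Since $\cS$ is finite, we argue by induction on $|\cS|$: it suffices to pass from $\widetilde{g}$ to a new $L^\infty(M) \cap C^{2,\alpha}_{\loc}(M \setminus \cS')$ metric with $R > 0$ on its regular part and $|\cS'| = |\cS| - 1$. The base case $\cS = \emptyset$ is immediate (set $\overline{g} = \widetilde{g}$).

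Fix $p \in \cS$. For step (1), I would solve the conformal Laplace equation
\begin{equation*}
    L_{\widetilde{g}} u := -8 \Delta_{\widetilde{g}} u + R(\widetilde{g}) u = 0 \quad \text{on } M \setminus \cS
\end{equation*}
with $u > 0$, bounded near the other singular points, and with Green-function-type blow-up $u \sim c \, \dist_{\widetilde{g}}(\cdot, p)^{-1}$ at $p$. Since $R(\widetilde{g}) > 0$ strictly on $M \setminus \cS$ and $\cS$ has measure zero, the principal eigenvalue of $L_{\widetilde{g}}$ is strictly positive, so an exhaustion/direct-method argument combined with a Harnack/comparison matching in an $L^\infty$-equivalent Euclidean chart near $p$ yields $u \in C^{2,\alpha}_{\loc}(M \setminus \cS)$ with the stated asymptotics. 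Setting $g_1 := u^4 \widetilde{g}$, the three-dimensional conformal scalar curvature formula gives $R(g_1) \equiv 0$ on the regular part, while the blow-up factor turns the puncture at $p$ into an asymptotic end on which $g_1$ is uniformly Euclidean, after the inversion $y \mapsto y/|y|^2$ in the chart.

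For step (2), I would apply Lemma \ref{lemm:bounded.minimal.surface.existence} to this uniformly Euclidean end to obtain a bounded minimizing region $\Omega$ whose boundary $\Sigma := \partial \Omega$ has zero $g_1$-mean curvature. Since the end is diffeomorphic to $\RR^3 \setminus B$ and $\Sigma$ is area-minimizing with trivial ambient topology, standard arguments ensure that (the outermost component of) $\Sigma$ is a smoothly embedded $2$-sphere separating the end from the rest of $M \setminus \{p\}$. Let $M_1$ be the closure of the non-end component of $(M \setminus \{p\}) \setminus \Sigma$; topologically $M_1 \cong M \setminus \text{ball}$, its restricted metric is globally $L^\infty$ and $C^{2,\alpha}_{\loc}$ away from $\cS \setminus \{p\}$, has $R \equiv 0$ on its regular part, and has minimal boundary $\Sigma$. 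In step (3), I would attach a cap $B^3$ along $\Sigma$ carrying a smooth metric of strictly positive scalar curvature, whose boundary induces $g_1|_\Sigma$ and whose cap-side mean curvature is nonnegative. Such a cap extension is produced by a Mantoulidis-Miao style construction (cf.\ \cite{MantoulidisMiao17}) using a small scale and a round target interior. The glued metric $g_2$ lives on a manifold diffeomorphic to $M$, is $L^\infty$ globally, smooth away from $\Sigma \cup (\cS \setminus \{p\})$, has $R(g_2) \geq 0$ on its regular part with $R(g_2) > 0$ on the cap, and has nonnegative mean-curvature sum across $\Sigma$.

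To close the induction step, I would smooth the codimension-1 singularity across $\Sigma$ by Miao's procedure (Section \ref{subsec:codimension.1.appendix}), using the nonnegative mean-curvature-sum condition, to obtain $g_3 \in L^\infty(M) \cap C^{2,\alpha}_{\loc}(M \setminus (\cS \setminus \{p\}))$ with $R(g_3) \geq 0$ on its regular part and $R(g_3) \not\equiv 0$. Applying Corollary \ref{coro:positive.scalar.curvature} then conformally deforms $g_3$ into a metric $g_4$ with $R(g_4) > 0$ on $M \setminus (\cS \setminus \{p\})$, completing one induction step. Iterating $|\cS|$ times yields the desired smooth $\overline{g}$ on $M$ with $R(\overline{g}) > 0$, hence $\sigma(M) > 0$. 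The principal obstacles are in step (3): (a) verifying that $\Sigma$ actually has the $S^2$ topology needed for a 3-ball cap (which relies on the uniformly Euclidean structure of the blown-up end and the minimizing property of $\Sigma$), and (b) constructing a smooth PSC 3-ball with the prescribed boundary metric and nonnegative boundary mean curvature; both hinge sensitively on the geometric control produced by the conformal blow-up.
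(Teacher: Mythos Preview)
Your three-step outline matches the paper's, but the specific conformal blow-up you choose creates a genuine gap. By solving $L_{\widetilde g}u=0$ exactly you force $R(g_1)\equiv 0$, and this sacrifices the only robust tool available for steps (2)--(3). The paper instead sets $h_\sigma=(1+\sigma G)^4 h$, where $G$ is the Green's function of $-8\Delta_h+\phi(R(h);1)$ (the truncation of the potential is needed so that Littman--Stampacchia--Weinberger applies despite the merely $L^\infty$ metric and possibly unbounded $R$); a direct check then gives $R(h_\sigma)>0$ strictly. With ambient $R>0$ in hand, stability of the area-minimizing $\Sigma$ together with \cite{FischerColbrieSchoen80} forces every component to be a $2$-sphere, and the same strict inequality yields $\lambda_1(-\Delta_\Sigma+K_\Sigma)>0$, which is exactly the hypothesis the Mantoulidis fill-in needs to produce a genuinely PSC cap. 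Your proposed replacement (``trivial ambient topology and minimizing'') does not do this job: a connected separating closed surface in $S^2\times[1,\infty)$ in the homology class of $S^2\times\{*\}$ can have higher genus, and with $R\equiv 0$ Fischer--Colbrie--Schoen leaves the flat torus case open, so neither the topology of $\Sigma$ nor the PSC cap is secured.

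There is a second, independent gap in step (2): Lemma~\ref{lemm:bounded.minimal.surface.existence} only traps $\partial\Omega$ from the outside; it does not prevent contact with the inner obstacle $\partial B_1$ (equivalently $\partial U_p$), where regularity is unavailable. The paper resolves this with the free parameter $\sigma$: one shows $\cH^2_{h_\sigma}(\Sigma_\sigma)\to 0$ as $\sigma\downarrow 0$, while local monotonicity gives a uniform positive lower area bound for any component meeting a fixed collar of $\partial U_p$; these are incompatible for small $\sigma$, so $\Sigma_\sigma$ lies strictly in $U_p\setminus\{p\}$ and is a smooth stable minimal surface. Finally, note that the paper places the Green's-function poles at \emph{all} points of $\cS$ simultaneously rather than inducting; this also sidesteps the issue your induction would face of keeping $u$ bounded above and below near the remaining singular points when $R(\widetilde g)$ may blow up there.
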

\begin{proof}
    We may assume, without loss of generality, that $\cS \neq \emptyset$, for else there is nothing to do. For notational simplicity we relabel $\widetilde{g}$ as $h$. Let $G$ denote the distributional solution of elliptic PDE
    \[ - 8 \Delta_h G + \phi(R(h); 1) G = \delta_{\cS} \text{ on } M, \]
    where $\delta_{\cS}$ denotes the Dirac delta measure on $\cS$, and $\phi$ is as in Section \ref{sec:smooth.out.n2.faces}. Since $h$ is uniformly Euclidean and $\phi(R(h);1)$ is bounded, we know that
    \begin{equation} \label{eq:greens.function.asymptotics.0}
		c_{0,G}^{-1} \dist_h(\cdot, \cS)^{-1} \leq G \leq c_{0,G} \dist_h(\cdot, \cS)^{-1} \text{ on } M \setminus \cS \text{,}
    \end{equation}
    for $c_{0,G} = c_{0,G}(M, h) > 0$, and, therefore,
    \begin{equation} \label{eq:greens.function.asymptotics}
		c_G^{-1} \dist_g(\cdot, \cS)^{-1} \leq G \leq c_G \dist_g(\cdot, \cS)^{-1} \text{ on } M \setminus \cS \text{.}
    \end{equation}
    We refer the reader to \cite{LittmanStampacchiaWeinberger63} for the existence and the aforementioned blow up rate of Greens functions in this setting.
    
	Consider, for small $\sigma > 0$, the conformal metric $h_\sigma = (1 + \sigma G)^4 h$ on $M \setminus  \cS$, which is $C^{2,\alpha}_{\loc}$, complete, noncompact, and whose scalar curvature satisfies
    \begin{align*}
    	R(h_\sigma) & = (1 + \sigma G)^{-5}(-8 \Delta_h (1 + \sigma G) + R(h) (1 + \sigma G)) \\
    		& = (1+\sigma G)^{-5} R(h) + (1+\sigma G)^{-5} \sigma (R(h) - \phi(R(h); 1)) G > 0.
    \end{align*}
    Fix a family of disjoint open neighborhoods of the points in $\cS$ (one for each point) labeled $\{ U_p \}_{p \in \cS}$, so that each $U_p \subset M$ is diffeomorphic to a 3-ball.
    
    \begin{claim}
        For every $p \in \cS$, there exists a diffeomorphism 
        \[ \Phi_p : \RR^3 \setminus B_1(0)  \stackrel{\approx}{\longrightarrow} U_p \setminus \{p\} \]
        and a constant $c_{p,\sigma} > 0$ such that
        \[ c_{p,\sigma}^{-1} \delta \leq \Phi_p{}^* h_\sigma \leq c_{p,\sigma} \delta \text{,} \]
        where $\delta$ denotes a flat metric on $\RR^3 \setminus B_1(0)$.
    \end{claim}
    \begin{proof}
        From the manifold's smooth structure, there exists a  diffeomorphism
        \[ \Psi_p : (B_1(0) \subset \RR^3) \stackrel{\approx}{\longrightarrow} (U_p \subset T), \]
        such that $\Psi_p(0) = p$. We can then define
        \[ \Phi_p := \Psi_p \circ \iota, \]
        where $\iota(x) = |x|^{-2} x$ is the inversion map on $\RR^3 \setminus \{0\}$. With this definition for $\Phi_p$, we see that
        \begin{align*}
        	\Phi_p{}^* h_\sigma & = \iota^* \Psi_p^* (1+\sigma G)^4 h \\
        		& = (1+\sigma (G \circ \Psi_p \circ \iota))^4 (\iota^* \Psi_p{}^* h).
        \end{align*}
        Next, note that $\Psi_p{}^* h$ is uniformly Euclidean on $B_1(0)$, and thus certainly on $B_1(0) \setminus \{0\}$. By the scaling nature of $\iota$, $\rho^4 (\Psi_p{}^* h)$ is uniformly Euclidean on $\RR^3 \setminus B_1(0)$, with $\rho$ denoting the standard radial polar coordinate on $\RR^3$. The result then follows from the asymptotics in \eqref{eq:greens.function.asymptotics}.
    \end{proof}
    
    \begin{claim}
        For every $p \in \cS$, and for every sufficiently small $\sigma > 0$, there exists a compact set $D_p \subset U_p$ whose boundary $\partial D_p$ consists of a stable minimal 2-spheres in $(M \setminus \cS, h_\sigma)$.
    \end{claim}
    \begin{proof}
        Lemma \ref{lemm:bounded.minimal.surface.existence} guarantee that for each $p \in \sing \cS$ there exists a compact surface $\Sigma = \Sigma_\sigma$ in $(U_p \setminus \{p\}, h_\sigma)$, with least $\cH^2_{h_\sigma}$-area among all surfaces in the same region that are homologous to $\partial U_p$. From standard regularity theory in geometric measure theory \cite{Simon83}, $\Sigma$ is regular and embedded away from $\partial U_p$. By a straightforward comparison argument and the smooth convergence $h_\sigma \to h$ away from $\cS$, we know that
        \begin{equation} \label{eq:area.minimization.decay}
        	\lim_{\sigma \to 0} \cH^2_{h_\sigma}(\Sigma_\sigma) = 0.
        \end{equation}
        Next, denote
        \[ W_{p,\tau} := \{ x \in U_p : \dist_g(x; \partial U_p) < \tau \}, \]
        where $\tau > 0$ is small. We will show that, for $\tau > 0$,
        \[ \Sigma \cap (W_{p,2\tau} \setminus W_{p,\tau}) = \emptyset, \]
        as long as $\sigma > 0$ is sufficiently small (depending on $\tau$).
        
        Assume, by way of contradiction, that there exists a sequence $\sigma_j \downarrow 0$ such that the corresponding area-minimizing surfaces $\Sigma_j = \Sigma_{\sigma_j}$ are such that $\Sigma_j \cap (W_{p,2\tau} \setminus W_{p,\tau}) \neq \emptyset$ for all $j = 1, 2, \ldots$ For each $j$, pick $p_j \in \Sigma_j \cap (W_{p,2\tau} \setminus W_{p,\tau})$, and denote $T_j$ the connected component of $\Sigma_j$ in $W_{p,2\tau}$ containing the point $p_j$. By the local monotonicity formula in small regions of Riemannian manifolds, and the fact that $h_{\sigma_j} \to h$ smoothly away from $\sing \cS$, we know that
        \begin{equation} \label{eq:area.minimization.decay.obstruction}
        	\liminf_j \cH^2_{h_{\sigma_j}}(T_j) > 0.
        \end{equation}
        However, \eqref{eq:area.minimization.decay.obstruction} contradicts \eqref{eq:area.minimization.decay}.

		Thus, $\Sigma \cap (W_{p,2\tau} \setminus W_{p,\tau}) = \emptyset$ as long as $\sigma$ is sufficiently close to zero. This implies that
		\begin{enumerate}
			\item $\Sigma' \subset \overline{W}_{p,\tau}$, or
			\item $\Sigma' \cap W_{p,2\tau} = \emptyset$,
		\end{enumerate}
		for every connected component $\Sigma' \subset \Sigma$.  Case (1) cannot occur for arbitrarily small $\sigma > 0$: there is a positive lower $\cH^2_h$-area bound for all non-null-homologous surfaces in $\overline{W}_{p,\tau}$, in violation of \eqref{eq:area.minimization.decay}. Thus, (2) holds for all connected components $\Sigma' \subset \Sigma$. Therefore,
		\[ \Sigma \cap W_{p,2\tau} = \emptyset \implies \Sigma \cap \partial U_p = \emptyset, \]
		so $\Sigma$ is a regular embedded minimal surface in $(U_p \setminus \{p\}, h_\sigma)$. Using $R(h_\sigma) > 0$ and the main theorem of \cite{FischerColbrieSchoen80}, we conclude that $\Sigma$ consists of stable minimal 2-spheres. The fact that $\Sigma$ bounds a compact region follows from topological considerations.
    \end{proof}
    
    Fix $\sigma > 0$ small enough so that the previous claim applies for all $p \in \cS$, where the corresponding $\cH^2_{h_\sigma}$-area minimizing surfaces are $\Sigma_p$, $p \in \cS$. Denote $\Sigma := \cup_{p \in \cS} \Sigma_p$.
    
    Combining \cite[Lemma 2.2.1]{Mantoulidis17} and \cite[Corollary 2.2.13]{Mantoulidis17}, we deduce that there exists a smooth manifold $N^3$, diffeomorphic to $M^3$, and a metric $\overline{h}$ on $N$ which is uniformly $C^2$ on the complement of the image $\Sigma' \subset N$ of $\Sigma \subset M$, Lipschitz across $\Sigma'$, and such that $\Sigma'$ is minimal from both sides. Since the mean curvatures of $\Sigma'$ from both sides agree, we may use the mollification procedure in \cite[Proposition 4.1]{Miao02} to smooth out $\overline{h}$ to $\widetilde{h}$; the result follows by applying the conformal transformation in Lemma \ref{lemm:lq.positive.scalar.curvature} to $\widetilde{h}$, with $\chi = R(\widetilde{h})$.
\end{proof}

\section{Proof of main theorems}

\begin{proof}[Proof of Theorem {\ref{theo:main.highD}}]
	Notice that $\sing \cS = \emptyset$, so, by Proposition \ref{prop:smooth.out.n2.faces}, either
	\begin{enumerate}
		\item $g$ is $C^2$ and $R(g) \equiv 0$ on $M$, or
		\item there exists a $C^2$ metric $\widetilde{g}$ on $M$ with $R(\widetilde{g}) > 0$ everywhere.
	\end{enumerate}
  The latter contradicts the assumption $\sigma(M) \leq 0$, so the prior must be true. In that case, the result follows from Theorem \ref{theo:sigma.obstruction.and.rigidity}.
\end{proof}

\begin{proof}[Proof of Theorem {\ref{theo:main.3D}}]
	By Proposition \ref{prop:smooth.out.n2.faces}, either
	\begin{enumerate}
		\item $g$ is $L^\infty(M) \cap C^{2,\alpha}_{\loc}(M \setminus \sing \cS)$ with $R(g) \equiv 0$, or
		\item there exists an $L^\infty(M) \cap C^{2,\alpha}_{\loc}(M \setminus \sing \cS)$ metric $\widetilde{g}$ with
			\[ R(\widetilde{g}) > 0 \text{ on } M \setminus \sing \cS. \]
	\end{enumerate}
	Proposition \ref{prop:smooth.out.vertices} rules out the second case when $\sigma(M) \leq 0$, so the first case holds.
	
	\begin{claim}
		$\Ric(g) \equiv 0$ on $M \setminus \sing \cS$.
	\end{claim}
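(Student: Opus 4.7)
The plan is to argue by contradiction: assuming $\Ric(g)\not\equiv 0$ somewhere on $M\setminus\sing\cS$, we will produce a smooth metric on $M$ with $R>0$ everywhere, contradicting $\sigma(M)\leq 0$.

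The first step is a Bourguignon--Kazdan--Warner perturbation supported away from the (finite) vertex set $\sing\cS$. Since $R(g)\equiv 0$, the tensor $\Ric(g)$ is $g$-traceless on $M\setminus\sing\cS$. Pick a point $p_0\in M\setminus\sing\cS$ with $\Ric(g)(p_0)\neq 0$ and a smooth cutoff $\rho:M\to[0,1]$ supported in a small ball $B\subset M\setminus\sing\cS$ with $\rho(p_0)=1$. For $t>0$ small, define
\[ g_t := g - t\rho\,\Ric(g), \]
replacing $\Ric$ by a smooth mollification if $C^{2,\alpha}$ regularity of $g_t$ must be preserved. Then $g_t$ is an $L^\infty(M)\cap C^{2,\alpha}_{\loc}(M\setminus\sing\cS)$ metric, uniformly equivalent to $g$, and agreeing with $g$ near $\sing\cS$. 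Using $R\equiv 0$, tracelessness of $\Ric$, and the contracted Bianchi identity $\Div\Ric=\tfrac12 dR\equiv 0$, a standard linearization computation gives
\[ R(g_t) = t\bigl[\rho|\Ric|^2 - \langle\Hess\rho,\Ric\rangle\bigr] + O_{C^0}(t^2), \]
and integration by parts yields
\[ \int_M R(g_t)\,dV_g = t\int_M \rho|\Ric|^2\,dV_g + O(t^2) > 0 \quad\text{for }t\in(0,t_0]. \]

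The second step is conformal renormalization via Lemma \ref{lemm:lq.positive.scalar.curvature}, applied to $g_t$ with $\chi:=R(g_t)$. The smallness hypothesis $\|\chi_-\|_{L^{3/2}(M,g)}\leq\delta_0$ is immediate from $\|\chi\|_{L^\infty}=O(t)$. The integral condition $\int\chi_+\,dV_g > c_0^4\int\chi_-\,dV_g$ is delicate, since both $\int\chi_\pm$ are of order $t$, but is secured by selecting $\rho$ so that the positive mass $\int\rho|\Ric|^2$ dominates the $L^1$-norm of the Hessian correction $\langle\Hess\rho,\Ric\rangle$; the uniform positive lower bound on $|\Ric|$ near $p_0$ provides the needed freedom. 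The lemma then delivers $u\in C^{2,\alpha}_{\loc}(M\setminus\sing\cS)\cap C^0(M)$, $u>0$, such that $\widetilde g:=u^{4}g_t$ (recall $n=3$) satisfies $R(\widetilde g)>0$ on $M\setminus\sing\cS$ and remains $L^\infty(M)\cap C^{2,\alpha}_{\loc}(M\setminus\sing\cS)$.

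Finally, since $\sing\cS$ is finite and $R(\widetilde g)>0$ on its complement, Proposition \ref{prop:smooth.out.vertices} yields a smooth $C^{2,\alpha}$ metric $\overline g$ on $M$ with $R(\overline g)>0$ everywhere, hence $\sigma(M)>0$: contradiction. The main obstacle is the integral condition in the middle step; its resolution requires careful design of the cutoff $\rho$ so that the leading positive contribution $\rho|\Ric|^2$ dominates the leading negative contribution $-\langle\Hess\rho,\Ric\rangle$ in $L^1(M,g)$ uniformly in $t\to 0^+$.
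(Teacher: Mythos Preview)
Your overall strategy---perturb in the direction $-\rho\,\Ric(g)$, obtain a metric with positive scalar curvature on $M\setminus\sing\cS$, then invoke Proposition~\ref{prop:smooth.out.vertices} to reach a contradiction with $\sigma(M)\leq 0$---is exactly the paper's strategy, and your first-order computations (tracelessness, Bianchi, the formula for $R(g_t)$, and $\int_M R(g_t)\,dV_g = t\int_M\rho|\Ric|^2\,dV_g + O(t^2)$) are correct.

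The difference, and the real gap, is precisely the step you flag as ``the main obstacle.'' Invoking Lemma~\ref{lemm:lq.positive.scalar.curvature} with $\chi=R(g_t)$ requires $\int\chi_+>c_0^4\int\chi_-$, where both sides are of order $t$. Your proposed fix---designing $\rho$ so that $\int\rho|\Ric|^2$ dominates $\int|\langle\Hess\rho,\Ric\rangle|$ by the fixed factor $c_0^4$---does not obviously work: a scaling analysis shows that shrinking the transition annulus of $\rho$ to width $\epsilon$ makes the Hessian contribution scale like $r^{n-1}\epsilon^{-1}$ while the bulk term scales like $r^n$, so the ratio worsens; and you cannot send $r\to\infty$ inside a fixed ball in $M\setminus\sing\cS$. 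There is no evident choice of cutoff that secures the inequality for a Harnack constant $c_0$ bounded away from~$1$.

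The paper sidesteps this entirely by differentiating the first eigenvalue of the conformal Laplacian along the path $g_t$. Since $\lambda_1(0)=0$ with constant eigenfunction $u_0$, and since $R(g_0)\equiv 0$ and $\nabla u_0\equiv 0$ kill all other first-order contributions, one gets
\[
\lambda_1'(0)=\int_M \langle h,\Ric(g)\rangle\,dV_g>0
\]
for $h=\xi(\Ric(g))_\sigma$ with $\sigma$ small. Hence $\lambda_1(t)>0$ for small $t>0$, and the first eigenfunction $u_t$ directly furnishes the conformal factor with $R(u_t^{4}g_t)>0$ on $M\setminus\sing\cS$. This eigenvalue-perturbation step is what replaces your appeal to Lemma~\ref{lemm:lq.positive.scalar.curvature} and eliminates the need for any delicate balancing of $\rho$; in effect it is the statement that the Harnack constant $c_0$ in that lemma tends to $1$ as $t\to 0$, proved cleanly rather than hoped for.
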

	\begin{proof}[Proof of claim]
		We argue by contradiction. Suppose that $\Ric(g) \neq 0$ at some $p \in M \setminus \sing \cS$. Let $U$ be some small smooth open neighborhood of $p$ such that $U \subset \subset M \setminus \sing \cS$; note that $g|_U \in C^{2,\alpha}(U)$.
		
		Consider the Banach manifold
		\[ \mathcal{M}^{2,\alpha}_g(U) := \{ \text{metrics } g' \in C^{2,\alpha}(U) : g - g' \equiv 0 \text{ on } \partial U \}, \]
		where $g - g' \equiv 0$ on $\partial U$ is to be interpreted as the equality of tensors on $\overline{U}$ pointwise on the subset $\partial U \subset \overline{U}$; i.e., we aren't pulling back to $\partial U$. The scalar curvature functional
		\[ R : \mathcal{M}_g^{2,\alpha}(U) \to C^{0,\alpha}(U) \]
		is a $C^1$ Banach map, with Fr\'echet derivative $\delta R(g') : T_{g'} \mathcal{M}_g^{2,\alpha}(U) \to C^{0,\alpha}(U)$ known to be given by
		\[ \delta R(g')\{h\} = - \Delta_{g'} \Tr_{g'} h + \operatorname{div}_{g'} \operatorname{div}_{g'} h - \langle h, \Ric(g') \rangle_{g'}, \]
		for all $g' \in \mathcal{M}_g^{2,\alpha}(U)$, $h \in T_{g'} \mathcal{M}_g^{2,\alpha}(U) \cong (T_0^{2,\alpha} \otimes T_0^{2,\alpha})^*(U)$. Here, $T_{g'} \mathcal{M}_g^{2,\alpha}(U)$ denotes the tangent space at $g'$ to the Banach manifold $\cM_g^{2,\alpha}(U)$, and $T_0^{2,\alpha}(U)$ denotes the space of contravariant $C^{2,\alpha}$ tensors that vanish on $\partial U$.
		
		Fix some $h \in T_g \mathcal{M}_g^{2,\alpha}(U)$. Define $\gamma : [0,\delta) \subset \mathcal{M}_g^{2,\alpha}(U)$ to be a $C^1$ curve with $\gamma(0) = g$, $\gamma'(0) = -h$. By definition of Fr\'echet derivatives, the fact $R(g) \equiv 0$, and the trivial continuous embedding $C^{0,\alpha}(U) \hookrightarrow L^\infty(U)$, we have
		\begin{equation} \label{eq:main.3D.frechet.derivative}
		    \lim_{t \downarrow 0} \frac{\Vert R(\gamma(t)) - \Delta_g \Tr_g th + \operatorname{div}_g \operatorname{div}_g th - \langle th, \Ric(g) \rangle_g \Vert_{L^\infty(U)}}{t} = 0.
		\end{equation}
		In particular, by observing that the Fr\'echet derivative contains two divergence terms that integrate to zero with respect to $d\Vol_g$, we have
		\begin{align}
		    & \lim_{t \downarrow 0} \frac{1}{t} \int_U (R(\gamma(t)) - \langle th, \Ric(g) \rangle_g) \, d\Vol_g \nonumber \\
		    & \qquad = \lim_{t \downarrow 0} \frac{1}{t} \int_U (R(\gamma(t)) - \Delta_g \Tr_g th + \operatorname{div}_g \operatorname{div}_g th - \langle th, \Ric(g) \rangle_g) \, d\Vol_g \nonumber \\
		    & \qquad = 0, \label{eq:main.3D.integral.R}
		\end{align}
		where the last equality follows from \eqref{eq:main.3D.frechet.derivative}. 
		
		Consider the map $t\in(-\ep,\ep)\mapsto \lambda_1(t)=\lambda_1(-\frac{4(n-1)}{n-2}\Delta_{g_t}+R(g_t))$. Since $g_t$ is an smooth (in $t$) family of $C^{2,\alpha}$ metrics, we know that $t \mapsto \lambda_1(t)$ is $C^1$; the corresponding first eigenfunctions, $u_t$, normalized to have $\Vert u_t \Vert_{L^2(M,g_t)} = 1$, form a $C^1$ path in $W^{1,2}(M)$. (See, e.g., \cite[Lemma A.1]{MantoulidisSchoen15}). Notice that $u_0$ is a constant, $R_0 \equiv 0$ on $M \setminus \sing \cS$, and $\lambda_1(0) = 0$. Observe that
		\begin{align*}
		    \lambda'(0)
		        &=\td{}{t}\bigg\vert_{t=0} \int_M \left[\frac{4(n-1)}{n-2}|\nabla_{g_t}u_t|^2+R(g_t)u_t^2\right]d\Vol_{g_t} \\
		        &=\int_M \td{}{t}\bigg\vert_{t=0}R(g_t)d\Vol_{g}\\
		        &=\int_M \bangle{h,\Ric(g)}d\Vol_g.
		\end{align*}
		where we have used the fact that the only nonzero contribution of the derivative is from $\td{}{t}R(g_t)$ and \eqref{eq:main.3D.integral.R}.
		
		Suppose, now, that $(\Ric(g))_\sigma$ denotes a (tensorial) mollification of $\Ric(g)$ away from $p$, such that
		\begin{equation} \label{eq:main.3D.mollification.Linfty}
		    \lim_{\sigma \to 0} \Vert (\Ric(g))_\sigma - \Ric(g) \Vert_{L^\infty(U)} = 0.
		\end{equation}
		If $\xi : M \to [0,1]$ is a smooth cutoff function such that $\xi(p) = 1$ and $\spt \xi \subset U$, then \eqref{eq:main.3D.mollification.Linfty} implies
		\begin{equation} \label{eq:main.3D.mollification.L2}
		    \lim_{\sigma \to 0} \langle \xi (\Ric(g))_\sigma, \Ric(g) \rangle_{L^2(U,g)} > 0.
		\end{equation}
		Together, \eqref{eq:main.3D.integral.R},  \eqref{eq:main.3D.mollification.L2} imply that for all sufficiently small $\sigma > 0$,
		\[\lambda(t)>0,\]
		for all $t \in (0, t_0(\sigma))$, when $h = \xi (\Ric(g))_\sigma \in T_g \mathcal{M}_g^{2,\alpha}(U)$; in fact, since $\gamma(t)$, $g$ are all uniformly equivalent for small $\sigma$, $t$, we have
		\[ \int_U R(\gamma(t)) \, d\Vol_{\gamma(t)} > 0 \text{ for } t \in (0, t_0(\sigma)). \]
		Now fix a small $\sigma$. This implies that for any $t \in (0, t_0(\sigma))$, $\widetilde{g}_t=u_t^{\frac{4}{n-2}}g_t$ is a $L^\infty(M) \cap C^{2,\alpha}_{\loc}(M \setminus \sing \cS)$ metric with \emph{positive} scalar curvature on its regular part; this contradicts Proposition \ref{prop:smooth.out.vertices} when $\sigma(M) \leq 0$.
		
	\end{proof}
	
	Given the above, all that remains to be checked is that $g$ is smooth across $\sing \cS$. This follows (when $n = 3$) from the main theorem of Smith-Yang \cite{SmithYang92} on the removability of isolated singularities of Einstein metrics.
\end{proof}

We now turn our attention onto asymptotically flat manifolds and prove Theorem \ref{theo:PMT.highD}, \ref{theo:PMT.3D}. The idea is to take the smoothed metric $g_\ep$ in Lemma \ref{lemm:smooth.out.edge.singularities} and apply a conformal deformation to $g_\ep$ with small change of the ADM mass. Assume $(M^n,g)$ is an asymptotically flat manifold, $\cS\subset M$ is a compact nondegenerate $(n-2)$-skeleton which is $\eta$-regular along $\reg \cS$.

\begin{proof}[Proof of Theorem \ref{theo:PMT.highD}]
    Notice that $\sing \cS=\emptyset$. By Lemma \ref{lemm:smooth.out.edge.singularities}, for every $\gamma>0$, there exists constant $\ep_1$ such that for every $\ep\in (0,\ep_1]$, there is a metric $\hat{g_\ep}$ on $M$ such that:
    \begin{enumerate}
        \item $\widehat{g}_\ep$ is $C^2(M)$;
        \item $\widehat{g}_\ep=g$ on $M\setminus B_\ep^g(\reg \cS)$;
        \item $\|R(\widehat{g}_\ep)_{-}\|_{L^{\frac{n}{2}+\delta}(M,g)}\le \gamma$;
    \end{enumerate}
    
    By the maximum principle and the Poincar\'e-Sobolev inequality, we conclude that the elliptic boundary value system
    \[ \begin{Bmatrix}
            \Delta_{\widehat{g}_\ep}u_\ep+c_n R(\widehat{g}_\ep)_{-}u_\ep = 0 \\
            \lim_{x\rightarrow\infty}u_\ep = 1 \\
            u_\ep = 0 \text{ on } \partial M
        \end{Bmatrix} \]
    has a unique solution $u_\ep$, and $0<u_\ep<1$. This follows as in \cite{SchoenYau79pmt}. The same argument as in \cite[Proposition 4.1]{Miao02}, moreover, shows that
    \[\lim_{\ep\rightarrow 0}\|u_\ep-1\|_{L^\infty(M)}=0,\quad \|u_\ep\|_{C^{2,\alpha}(K)}\le C_K,\]
    for each compact set $K \subset M \setminus \cS$, where $C_K=C_K(g,\cS,\Lambda,K)$. 
    
    Now define $\widetilde{g}_\ep=u_\ep^{\frac{4}{n-2}}\widehat{g}_\ep$. By the choice of $u_\ep$, $R(\widetilde{g_\ep})\ge 0$ everywhere. We then apply the argument of \cite[Lemma 4.2]{Miao02} and conclude that
    \[ m_{ADM}(g) = \lim_{\ep \to 0} m_{ADM}(\widetilde{g}_\ep), \]
    which is $\geq 0$ by the smooth positive mass theorem \cite{SchoenYau79pmt, SchoenYau81, SchoenYau17}.
    
    If the cone angle along $\cS$ is not identically $2\pi$, then Lemma \ref{lemm:smooth.out.edge.singularities} additionally gives the following concentration behavior of scalar curvature:
    \[R(\widehat{g_\ep})\ge C_1 \ep^{-2} \text{ on }B_{c_2\ep}^g(\cS)\setminus B_{c_3\ep}^g(\cS),\]
    where $C_1=C_1(\cS,g,\Lambda)$, $c_j=c_j(\cS,g,\Lambda)$, $j=2,3$.
    Then \cite[Proposition 4.2]{Miao02} implies that
    \[\liminf_{\ep \to 0} m_{ADM}(\widetilde{g_\ep})>0,\]
    and hence $m_{ADM}(g)>0$. Hence if $m_{ADM}(g)=0$, then $g$ is smooth across $\cS$, and therefore the rigidity conclusion of the smooth positive mass theorem in \cite{SchoenYau17} implies that $g$ is flat everywhere.
\end{proof}

\begin{proof}[Proof of Theorem \ref{theo:PMT.3D}]
    Take $\widehat{g}_\ep$ as in Lemma \ref{lemm:smooth.out.edge.singularities}. By the maximum principle and the Poincar\'e-Sobolev inequality, the weak $W^{1,2}_{\loc}(M)$ solution of 
    \[ \begin{Bmatrix}
            \Delta_{\widehat{g}_\ep}u_\ep+c_n R(\widehat{g}_\ep)_{-}u_\ep = 0 \\
            \lim_{x\rightarrow\infty}u_\ep = 1 \\
            u_\ep = 0 \text{ on } \partial M
        \end{Bmatrix} \]
    exists and satisfies $0<u_\ep<1$. By standard elliptic theory and De Giorgi-Nash-Moser theory, $u_\ep \in C^{2,\alpha}_{\loc}(M\setminus \sing \cS)\cap C^{0,\theta}(M)$, for some $\theta \in (0,1)$. Moreover,
    \[ \inf u_\ep \geq c_1 \sup u_\ep = c_1 = c_1(g, \cS, \Lambda), \]
    by Moser's Harnack inequality.
    
    The metric $\widetilde{g}_\ep=u_\ep^4 \widehat{g}_\ep$ is asymptotically flat with only isolated singularities and nonnegative scalar curvature away from $\sing \cS$. Let $G(\cdot,\sing \cS)$ be the Green's function of $\Delta_{\widetilde{g_\ep}}-\frac{1}{8}\phi(R(\widetilde{g_\ep};1))$ with poles at $\sing\cS$, and which decays to zero at infinity. Apply the excision lemma (Lemma \ref{lemm:bounded.minimal.surface.existence}) to the blown up metric
    \[h_\ep=(1+\sigma(\ep)G)^4 \widetilde{g}_\ep \]
    on $M\setminus \sing \cS$, where $\sigma(\ep)$ is the constant that appears in the proof of Proposition \ref{prop:smooth.out.vertices}, and $\lim_{\ep \to 0} \sigma(\ep) = 0$. Excise $(M,h_\ep)$ along each area minimizing two-sphere in the asymptotically Euclidean end in $(M,h_\ep)$. 
    
    Notice that, since $\lim_{\ep \to 0} \sigma(\ep) = 0$,
    \[ m_{ADM}(g) = \lim_{\ep\rightarrow 0} m_{ADM}(\widetilde{g}_\ep) = \lim_{\ep\rightarrow 0} m_{ADM}(h_\ep) \]
    on each asymptotically flat end of $M$ (recall that we've excised one end). By the smooth positive mass theorem \cite{SchoenYau79pmt, SchoenYau81}, $m_{ADM}(h_\ep)\ge 0$. Therefore $m_{ADM}(g)\ge 0$. (To see the above limits on the ADM masses, we first notice that the metrics $h_\ep$ and $\widetilde{g}_\ep$ differ by a factor which converges to zero in $C^2$, as $x\rightarrow \infty$ and $\ep\rightarrow 0$. Hence from the definition of ADM mass, $\lim_{\ep\rightarrow 0}m(h_\ep)=\lim_{\ep\rightarrow 0}m(\widetilde{g}_\ep)$. To see that $\lim_{\ep}(\widetilde{g}_\ep)=m(g)$, we just apply \cite[Lemma 4.2]{Miao02} again on the family of conformal factors $u_\ep$.)
    
    Now we conclude the rigidity case. Assume $m_{ADM}(g)=0$. If $R(g)$ is not identically zero on $\reg\cS$, or the cone angle along $\reg \cS$ is not identically $2\pi$, then a similar concentration behavior as in the proof of Theorem \ref{theo:PMT.highD}, combined with \cite[Proposition 4.2]{Miao02}, show that
    \[\liminf_{\ep \to 0}m_{ADM}(h_\ep)>0;\]
    this would contradict our rigidity assumption. Therefore $g$ is scalar flat on $M\setminus \cS$, and is $C^{2,\alpha}$ across $\reg \cS$ locally away from $\sing \cS$. Now we prove $\Ric(g)=0$ away from $\sing \cS$. Consider the metrics $g_t=g-th$, where $h$ is a $C^{2,\alpha}$ symmetric $(0,2)$ tensor, compactly supported away from $\sing\cS$. Let $u_t$ be the weak solution to
    \[ \begin{Bmatrix}
            \Delta_{g_t}u_t+c_n R(g_t)_{-}u_t = 0 \\
            \lim_{x\rightarrow\infty}u_t = 1 \\
            u_t = 0 \text{ on } \partial M
        \end{Bmatrix} \]
    Then (see, e.g., the proof of Theorem \ref{theo:main.3D} for details) the metric $\widehat{g}_t=u_t^4 g_t$ has zero scalar curvature and isolated uniformly Euclidean point singularities. Therefore $m_{ADM}(\widehat{g}_t)\ge 0$ by the positive mass theorem for isolated $L^\infty$ singularities established just above. On the other hand, by a similar calculation as in \cite{SchoenYau79pmt}, we see that
    \[\td{}{t}\bigg\vert_{t=0}m_{ADM}(M, \widehat{g}_t)=C_1(n)\int_M \bangle{\Ric(g),h}.\]
    Now if $\Ric(g)\ne 0$ in an open neighborhood of $M\setminus \sing \cS$, we may pick $h=\xi (\Ric(g))_\sigma$, where $\xi$ is a function compactly supported in $U$, $(\Ric(g))_\sigma$ is a $C^{2,\alpha}$ mollification of $\Ric(g)$, and make $m_{ADM}'(0)\ne 0$. (See the proof of Theorem \ref{theo:main.3D}.) This is a contradiction to the positive mass theorem for isolated $L^\infty$ singularities, which would imply that $m_{ADM}(0) = 0$ is a global minimum of $t \mapsto m_{ADM}(M, \widehat{g}_t)$.
    
    Finally, being in $n=3$, we conclude that $g$ is smooth and flat across $\sing \cS$ by the removable singularity theorem \cite{SmithYang92} of Einstein metrics.
\end{proof}

\section{Examples, counterexamples, remarks} \label{sec:examples.counterexamples}

\subsection{Codimension-1 singularities and mean curvature} \label{subsec:codimension.1.appendix} Question \ref{quest:singular.psc.rigidity} is \emph{true} when $\cS \subset M$ is a closed, embedded, two-sided submanifold, and:
\begin{enumerate}
    \item $g$ is smooth up to $\cS$ from both sides, 
    \item $g$ induces the same metric $g_{\cS}$ on $\cS$ from both sides, and
    \item the sum of mean curvatures of $\cS$ computed with respect to the two unit normals as outward unit normals is nonnegative.
\end{enumerate}

We'd like to point out that condition (3) is imperative, as the following counterexample clearly shows: take a flat $n$-torus $\RR^n / \ZZ^n$, remove a small geodesic ball, and replace it with a constant curvature half-sphere of the same radius. Indeed, here the sum of mean curvatures is negative (one negative, the other zero), and the resulting metric $g$ does not have a removable singular set.

For some intuition on (3), one may use the first variation of mean curvature along a geodesic foliation of $M$ about $\cS$, and the Gauss equation on $\cS$, to see that
\begin{equation} \label{eq:codim.1.scalar.curv.formula}
    R(g)|_{\cS} = R(g_{\cS}) - \left[ \frac{d}{dt} H_t \right]_{t=0} -  |A_{\cS}|^2 - H_{\cS}^2.
\end{equation}
Heuristically, a positive sum of mean curvatures contributes a distributionally positive component to the scalar curvature $R(g)$ evaluated at $\cS$. 

\subsection{Codimension-2 singularities and cone angles} \label{subsec:codimension.2.appendix}

Allowing edge metrics with cone angles larger than $2\pi$ invalidates Question \ref{quest:singular.psc.rigidity}. We illustrate this here with a counterexample:

The example in Figure \ref{Example.angle.big} inspired by \cite[Example 5.6-B']{Gromov83}. For each integer $g\ge 2$, we describe a flat metric on a genus $g$ Riemann surface with isolated conical points with cone angle $> 2\pi$.

Take a planar graph $G$ with two nodes, $p$ and $q$, and $g+1$ edges. The graph separates the plane into $g+1$ connected components. Excise one disk from each bounded face and, and the exterior of a disk from the unbounded face, as in Figure \ref{Example.angle.big}.

\begin{figure}[h!]
    \centering
    \includegraphics[scale=0.25]{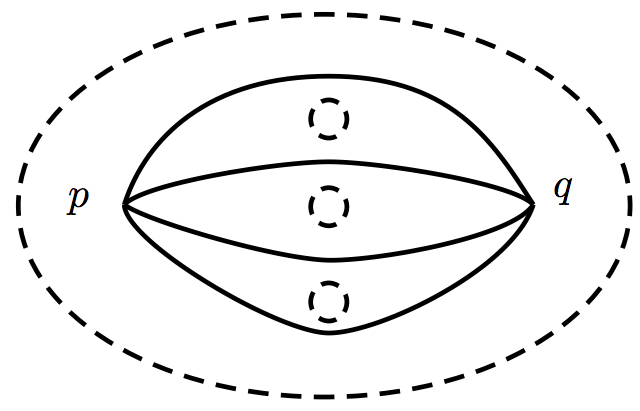}
    \caption{$g=3$ construction.}
    \label{Example.angle.big}
\end{figure}

Each face component is diffeomorphic to $\SS^1\times [0,1]$. Endow each face with a flat product metric via this diffeomorphism. Note that the metric now is smooth away from $p,q$, and is conical at $p$, $q$, with cone angle $(g+1)\pi$. The $g+1$ boundary components of this manifold, namely the $\SS^1\times \{1\}$'s, are totally geodesic. Take the doubling of this manifold across its boundary to obtain a genus-$g$ surface, $\Sigma$. Now $\Sigma$ has a smooth flat metric with four isolated conical singularities, each of which has cone angle $(g+1)\pi$.

For $n\ge 3$, consider the manifold $M=\Sigma\times (\SS^1)^{n-2} $ with the product metric. It's easy to see that this metric is an edge metric, flat on its regular part, with singularities with cone angles $(g+1)\pi$. However, since $M$ trivially carries a smooth metric with nonpositive sectional curvature, its $\sigma$-invariant satisfies $\sigma(M)\le 0$ by \cite[Corollary A]{GromovLawson84}. 

\subsection{Codimension-3 singularities that are not uniformly Euclidean.} \label{subsec:codimension.3.appendix} The ``uniformly Euclidean'' condition (i.e., that the metric be $L^\infty$) is imperative for Conjecture \ref{conj:schoen} to hold true. Indeed, if $g$ were allowed to blow up, then the doubled Riemannian Schwarzschild metric
\[ g = \left( 1 + \frac{m}{2r} \right)^4 \delta, \; m > 0, \]
on $\RR^3 \setminus \{0\}$ would be a counterexample: it can be viewed as a non-Einstein, scalar-flat metric on a twice-punctured 3-sphere.
    
Likewise, if $g$ were not bounded from below, then the negative-mass Riemannian Schwarzschild metric
\[ g = \left( 1 + \frac{m}{2r} \right)^4 \delta, \; m < 0, \]
on $\RR^3 \setminus \overline{B}_{-m/2}(0)$ would yield a counterexample: it can be conformally truncated near infinity to match Euclidean space, where we then identity the opposide faces of a large cube to yield a topologically smooth 3-torus with positive scalar curvature, which would be a counterexample since tori are known to have $\sigma(T^3) = 0$. See \cite[Section 6]{Lohkamp99} for more details.

\subsection{Examples of edge metrics} \label{subsec:source.edge.metrics}

Orbifold metrics provide an important source of edge metrics (with angle $< 2\pi$); they can be obtained as the quotient metric under a $\ZZ_k$ isometry group with an $(n-2)$-dimensional fixed submanifold. 

Generally speaking, the scalar curvature geometry of orbifolds can be substantially different from that of manifolds. For instance, Viaclovsky \cite{Viaclovsky10} showed that the Yamabe problem of finding constant scalar curvature metrics is \emph{not} generally solvable on orbifolds. (On manifolds, the problem was shown to be completely solvable in \cite{Trudinger68, Aubin76, Schoen84, SchoenYau88}.) Theorem \ref{theo:main.highD} nevertheless confirms that edge-type orbifold singularities along a codimension two submanifold cannot go so far as to change the Yamabe type from nonpositive to positive.

\subsection{Gromov's polyhedral comparison} \label{subsec:gromov.comparison}

We focus here on the aspect of \cite{Gromov14} that relates most closely with our work. This regards metric aspects of cubes in three-dimensional manifolds with nonnegative scalar curvature.

Let $(M^3, g)$ be a polyhedron of cube type, $M \approx [0,1]^3 \subset \RR^3$, with faces $F_j$. Let $\measuredangle_{ij} (M, g)$ denote the (possibly nonconstant) dihedral angle between two adjacent faces $F_i$ and $F_j$. Per Gromov's polyhedral comparison theory for nonnegative scalar curvature, $(M^3, g)$ cannot simultaneously satisfy:
\begin{enumerate}
    \item the scalar curvature is nonnegative, $R(g) \geq 0$;
    \item each $F_i$ is mean convex;
    \item for all two adjacent faces $F_i$, $F_j$, $\measuredangle_{ij} (M, g) < \tfrac{\pi}{2}$.
\end{enumerate}
Gromov's crucial observation is to argue, by contradiction, that such an arrangement would stand in violation of $\sigma(\TT^3) = 0$. Indeed, he proposes the following elegant construction: ``double'' $M$ three times ---across the front, right, and bottom faces--- and then identify all newly created isometric faces opposite of each other to obtain a torus, $\TT^3$. Then $g$ lifts to a metric $\widetilde{g}$ on $\mathbf{T}^3$ whose singular set stratifies as $F^2 \cup E^1 \cup V^0$, and:
\begin{enumerate}
    \item $\widetilde{g}$ is smooth on both sides of $F^2$, induces the same smooth metric on $F^2$, and the sum of mean curvatures of $F^2$ computed with respect to the two unit normals as outward unit normals is positive (see Section \ref{subsec:codimension.1.appendix});
    \item $\widetilde{g}$ is an edge metric along $E^1$ with angles less than $2\pi$; 
    \item $\widetilde{g}$ is uniformly Euclidean across $V^0$;
\end{enumerate}
Each stratum is independently compatible with a ``weak'' notion of nonnegative scalar curvature: see \cite{Miao02, ShiTam16} for codimension one, and Theorem \ref{theo:main.3D} for codimensions two and three. However, tori shouldn't carry such metrics.

Using different methods altogether, the first-named author confirmed in \cite{Li} that such arrangements don't exist and, moreover, that if one allows weakly mean curvex faces and non-obtuse dihedral angles, then such arrangements are rigid: they are necessarily rectangular domains in $\RR^3$.

\subsection{Sormani-Wenger intrinsic flat distance} \label{subsec:intrinsic.flat.distance}

For the reader's convenience, we recall the Sormani-Wenger definition:

\begin{defi}[{\cite[Definition 1.1]{SormaniWenger11}}] \label{defi:intrinsic.flat.convergence}
    Let $(M^n_1, g_1)$, $(M^n_2, g_2)$ be two closed Riemannian manifolds. Their intrinsic flat distance is defined as
    \[ d_{\mathcal{F}}((M_1, g_1), (M_2, g_2)) := \inf \{ d_F^Z((\varphi_1)_{\#} T_1, (\varphi_2)_{\#} T_2) : Z, \varphi_1, \varphi_2 \}; \]
    the infimum is taken over all complete metric spaces $(Z, d)$ and all possible isometric embeddings $\varphi_i$, $i = 1, 2$, of the metric spaces induced by $(M_i, g_i)$ into $(Z, d)$. The $T_i$, $i = 1, 2$ denote the integral $n$-currents $T_i(\omega) := \int_{M_i} \omega$, $(\varphi_i)_{\#} T_i$ denote their pushforwards to $Z$, and $d_F^Z$ denotes the Ambrosio-Kirchheim metric space flat norm \cite{AmbrosioKirchheim00}:
    \[ d_F^Z(S, T) := \inf \{ \mathbf{M}(U) + \mathbf{M}(V) : S-T = U + \partial V \}; \]
    this infimum is taken over integral $n$-currents $U$ and integral $(n+1)$-currents $V$ in $(Z, d)$.
\end{defi}

In this paper we have shown that the following families of singular Riemannian manifolds $(M^n, g)$ in Theorems \ref{theo:main.highD}, \ref{theo:main.3D} will either have:
\begin{enumerate}
    \item $\sigma(M) \leq 0$ and be everywhere smooth and Ricci-flat to begin with; or
    \item $\sigma(M) > 0$ and carry smooth metrics of positive scalar curvature.
\end{enumerate}

We conjecture that, in the second case, the desingularizations we have set up in this paper give rise to $d_{\mathcal{F}}$-Cauchy sequences of smooth closed PSC manifolds, which, moreover, recover $(M^n, g)$ as a metric $d_{\mathcal{F}}$-limit.

A more ambitious conjecture, that appears out of reach with today's state of the art, is to show that $(M^n, g)$, $n \geq 4$, with singular sets of codimension $\geq 3$ and positive scalar curvature everywhere else, arise as metric $d_{\mathcal{F}}$-limits of smooth closed PSC manifolds; cf. Conjecture \ref{conj:schoen}.

\bibliography{main} 
\bibliographystyle{amsalpha}

\end{document}